\newtheorem{thm}{\textbf Theorem}[section]
\newtheorem{lem}{\textbf Lemma}[section]
\newtheorem{rem}{\textbf Remark}[section]
\newtheorem{prop}{\textbf Proposition}[section]
\numberwithin{equation}{section}
\newcommand{\be}{\begin{eqnarray}}
\newcommand{\ee}{\end{eqnarray}}
\newcommand{\bes}{\begin{eqnarray*}}
\newcommand{\ees}{\end{eqnarray*}}
\begin{document}
\begin{titlepage}
\title{\bf Navier-Stokes equations with external forces in
Besov-Morrey spaces }

\author{Boling Guo$^{~a}$,  \quad Guoquan Qin$~^{b,*}$
\\[10pt]
\small {$^a $ Institute of Applied Physics and Computational Mathematics,
China Academy of Engineering Physics,}\\
\small {   Beijing,  100088,  P. R. China}\\[5pt]
\small {$^b $ Graduate School of China Academy of Engineering Physics,}\\
\small {  Beijing,  100088,  P. R. China}\\[5pt]
}
\footnotetext
{*~Corresponding author.

~~~~~E-mail addresses: gbl@iapcm.ac.cn(B. Guo), 690650952@qq.com(G. Qin).}


\date{}
\end{titlepage}
\maketitle
\begin{abstract}
We establish the  existence and uniqueness  of local strong solutions to the Navier-Stokes
equations with arbitrary initial data and external forces
in the homogeneous Besov-Morrey space.
The local solutions
can be extended globally in time
provided the initial data and external forces are small.
We adapt the method introduced in \cite{ks6},
where the Besov space is considered, to
the
setting of the homogeneous Besov-Morrey space.


 \vskip0.1in
\noindent{\bf MSC2010:} 35Q30, 76D05, 76D03.

\end{abstract}

~~\noindent{ \textbf{Key words}: Navier-Stokes equation, Besov-Morrey space,
maximal Lorentz regularity. }



\section{Introduction}
\setcounter{equation}{0}
In this paper, we consider the following incompressible Navier-Stokes
equations in $\mathbb{R}^{n}(n\geq 2):$
\begin{eqnarray}\label{ns}
\begin{cases}
u_{t}-\Delta u+u\cdot\nabla u+\nabla p=f, \ \ x\in \mathbb{R}^{n}, \ \ t\in(0, T),\\
\mbox{div}u=0,\ \ x\in \mathbb{R}^{n}, \ \ t\in(0, T),\\
u|_{t=0}=a,\ \ x\in \mathbb{R}^{n},
\end{cases}
\end{eqnarray}
where $u=u(x, t)=(u_{1}(x, t),..., u_{n}(x, t))$ and $p=p(x, t)$ denote the unknown velocity
vector and the unknown pressure at the point $x=(x_{1}, x_{2}, ..., x_{n})\in \mathbb{R}^{n}$
and the time $t\in(0, T),$ respectively, while $a=a(x)=(a_{1}(x),..., a_{n}(x))$
and $f=f(x, t)=(f_{1}(x, t),..., f_{n}(x, t))$ are the given initial velocity  vector and the
external force,  respectively.

As we all know,   equation (\ref{ns}) is
invariant under the following  change of scaling:
\begin{equation*}
 u_{\lambda}(x, t)=\lambda u(\lambda x, \lambda^{2} t)\ \
 \text{and}\ \
p_{\lambda}(x, t)=\lambda^{2} p(\lambda x, \lambda^{2} t)\ \
\text{for all}\ \ \lambda>0.
\end{equation*}

If  a Banach space $\mathcal{Y}$
satisfies $\|u_{\lambda}\|_{\mathcal{Y}}=\|u\|_{\mathcal{Y}}$
for all $\lambda>0$, then it is called scaling invariant to (\ref{ns}).
For
instance, in the setting of  Lebesgue spaces $L^{p}(\mathbb{R}^{n})$,
we see that the scaling invariant space $\mathcal{Y}$
to  (\ref{ns})
is the so-called  Serrin class $L^{\alpha}(0, \infty ; L^{p}(\mathbb{R}^{n}))$
for $2 / \alpha+n / p=1$ with $n \leq p \leq \infty$.
For the initial data $a$
and the external force $f$,
the
corresponding scaling laws
 are like $a_{\lambda}(x)=\lambda a(\lambda x)$
and $f_{\lambda}(x, t)=\lambda^{3} f(\lambda x, \lambda^{2} t),$
respectively.
Therefore, it is suitable to solve (\ref{ns}) in Banach spaces $X$
for $a$ and $Y$ for $f$  with the properties that
$\|a_{\lambda}\|_{X}=\|a\|_{X}$ and $\|f_{\lambda}\|_{Y}=\|f\|_{Y}$
for all $\lambda>0$, respectively.

Let's first recall some results with respect to  the space $X.$
Since the
pioneer work of Fujita-Kato \cite{fk}, many efforts have been made to find such a space $X$ as large
as possible.
For instance,
 Kato \cite{k2} and Giga-Miyakawa \cite{gm}
 succeed to find the space $X=L^{n}(\mathbb{R}^{n}).$
 Kozono-Yamazaki \cite{ky2} and Cannone-Planchon \cite{cp1}
 extended $X=L^{n}(\mathbb{R}^{n})$  to $X=L^{n, \infty}(\mathbb{R}^{n})$
 and $X=\dot{B}_{p, \infty}^{-1+n / p}(\mathbb{R}^{n})$ with $n<p<\infty,$
 where $L^{n, \infty}(\mathbb{R}^{n})$ is the weak Lebesgue spaces and
 $\dot{B}_{p, q}^{s}(\mathbb{R}^{n})$ denotes the homogeneous Besov space.
The largest space of $X$ was obtained by Koch-Tataru \cite{kt}
who proved local well-posedness of  (\ref{ns})
for $a\in X=BMO^{-1}=\dot{F}^{-1}_{\infty, 2}(\mathbb{R}^{n}),$
where  $\dot{F}^{s}_{q, r}(\mathbb{R}^{n})$  denotes the homogeneous Triebel-Lizorkin space.
The result in \cite{kt} seems to be optimal in the sense that continuous
dependence of solutions with respect to the initial data breaks down in
$X=\dot{B}^{-1}_{\infty, r}(\mathbb{R}^{n})$ for $2< r\leq \infty,$
which was proved by  Bourgain-Pavlovi\'{c} \cite{bp},
Yoneda \cite{y} and Wang \cite{w}.
 Amann \cite{a2} has established a systematic treatment
of strong solutions in various function spaces such as Lebesgue space $L^{p}(\Omega)$, Bessel potential
space $H^{s, p}(\Omega)$, Besov space $B_{p, q}^{s}(\Omega)$ and Nikol'skii space
$N^{s, p}(\Omega)$ in general domains $\Omega.$
In
this direction, based on the Littlewood-Paley decomposition, Cannone-Meyer \cite{cm} showed how to
choose the Banach spaces $X$ for $a$ and $\mathcal{Y}$ for $u$. Besides these results, in terms of the Stokes
operator, Farwig-Sohr \cite{fs1} and Farwig-Sohr-Varnhorn \cite{fsv} proved a necessary and sufficient condition
on $a$ such that weak solutions $u$ belong to the Serrin class.

On the other hand, in comparison with a number of papers on well-posedness with respect to the initial data, there is a little
 literature for investigating the suitable space $Y$ of external forces
 $f$ satisfying
 $\|f_{\lambda}\|_{Y}=\|f\|_{Y}$  for all $\lambda>0.$
 For instance, Giga-Miyakawa \cite{gm} proved existence
of strong solutions for
\begin{equation}
Y=\left\{f \in C\left(0, \infty ; L^{n}\left(\mathbb{R}^{n}\right)\right)
\|f\|_{Y}=\sup _{0<t<\infty} t^{1-\delta}\left\|(-\Delta)^{-\delta}\mathbb{P} f(t)\right\|_{L^{n}}<\infty\right\}
\end{equation}
for some $\delta>0$,
where $\mathbb{P}$ denotes the Helmholtz projection.
Cannone-Planchon \cite{cp2} treated
the case $n = 3$  and showed that
\begin{equation}
Y=\left\{f=\operatorname{div} F ; F \in L^{\alpha}(0, \infty ; L^{p}(\mathbb{R}^{3}))\right\}
\end{equation}
for $2 / \alpha+3 / p=2$ with  $2 / 3<p<\infty$  is a suitable space.
See also Planchon \cite{p}.
After introducing the space of pseudo
measures $\mathcal{P} \mathcal{M}^{k}=\left\{a \in \mathcal{S}^{\prime} ; \sup _{\xi \in \mathbb{R}^{n}}|\xi|^{k}|\hat{a}(\xi)|<\infty\right\}$,
Cannone-Karch \cite{ck} showed that the pair
of $X=\mathcal{P} \mathcal{M}^{2}$ and $Y=C_{w}\left((0, \infty) ; \mathcal{P} \mathcal{M}^{0}\right)$ is suitable for $n = 3$.
Recently,  Kozono-Shimizu \cite{ks2}
constructed mild solutions for $X=L^{n, \infty}\left(\mathbb{R}^{n}\right)$
and $Y=L^{\alpha, \infty}\left(0, \infty ; L^{p, \infty}\left(\mathbb{R}^{n}\right)\right)$,  where
$2 / \alpha+n / p=3$ and $\max\{1, n/3\} < p < \infty.$
Another choice of $X$ and $Y$ was obtained by Kozono-Shimizu \cite{ks3} which proved
the existence of mild solutions in the case when
\begin{equation*}
X=\dot{B}_{p, \infty}^{-1+\frac{n}{p}}(\mathbb{R}^{n})
\end{equation*}
and
\begin{equation*}
Y=\left\{f \in C_{w}(0, \infty; \dot{B}_{p_{0}, \infty}^{s_{0}}(\mathbb{R}^{n})); \|f\|_{Y} \equiv \sup _{0<t<\infty} t^{\frac{n}{2}(\frac{3}{n}-\frac{1}{p_{0}})+\frac{s_{0}}{2}}\|f(t)\| _{\dot{B}_{p_{0}, \infty}^{s_{0}}}<\infty\right\}
\end{equation*}
for $n < p < \infty$, $ n/3 < p_{0} \leq p$ and $s_{0} <\min\{0, n/p_{0}-1\}$.
A  similar investigation
in the time-weighted Besov spaces were given by Kashiwagi \cite{k1} and Nakamura \cite{n}.

Although these spaces $X$ and $Y$ are scaling invariant spaces, the solutions
$u$ are  just  mild solutions  in most cases.
To
gain enough regularity for the validity of (\ref{ns}),
  the H\"{o}lder continuity of $f(t) \in L^{n}(\mathbb{R}^{n})$
  is assumed
  on $0<t<T$.
Namely,
although $u$ belongs to the scaling invariant class $\mathcal{Y}$,
  whether
$-\Delta u$ and $\partial_{t} u$
are well-defined in some $L^{p}$-space is not exactly sure.
In this
direction, the result given by \cite{ks3} may be regarded as an almost optimal theorem on gain of
regularity of the mild solution $u$ since it holds that $u(t) \in H^{s}(\mathbb{R}^{n})$
for all $s < 2$ and for almost
everywhere $t\in (0,\infty).$

Very recently,
 based on the maximal Lorentz regularity theorem on the Stokes
equations in $\dot{B}^{s}_{p,q}(\mathbb{R}^{n})$,
Kozono-Shimizu \cite{ks6}
proved the existence of
 a strong solution $u$ with $-\Delta u$ and $\partial_{t} u$
in some Besov space such that (\ref{ns}) is fulfilled
almost everywhere in $\mathbb{R}^{n} \times(0, T)$
 for a suitable choice of spaces $X$ and $Y$.
Their obtained  solution  also belongs  to the usual Serrin class
$L^{\alpha_{0}}(0, T;L^{p_{0}}(\mathbb{R}^{n}))$ for
 $2/\alpha_{0} +n/p_{0} = 1$ with
$n <p_{0} < \infty$.
Also, they make it clear how to relate $X$ and $Y$ to
$\mathcal{Y}$.
 Their  result covers almost all previous results like \cite{k2}, \cite{ky1},
 \cite{c} and \cite{cp1}.

Following  the spirit of \cite{ks6} and \cite{ks2},
we try to find new  suitable spaces for  $X$ and $Y.$
  And the purpose of this paper is to
  prove the existence and uniqueness of local strong
solutions to equation (\ref{ns}) for arbitrary initial data
$a\in\dot{\mathcal{N}}_{p,\mu,r}^{-1+(n-\mu)/p}$
and external forces $f\in L^{\alpha, r}(0, T; \dot{\mathcal{N}}_{q,\mu,\infty}^{s})$,
where   $2/\alpha+(n-\mu)/q-s=3$,
 $1< p\leq q, 0\leq \mu< n$, $s>-1$ and $0<T\leq\infty$.
Here $\dot{\mathcal{N}}_{p,\mu,r}^{s}$
 denotes the homogeneous Besov-Morrey space.
We also show the existence and uniqueness of global solutions
for small $a \in \dot{\mathcal{N}}_{p,\mu, q}^{-1+\frac{n-\mu}{p}}(\mathbb{R}^{n})$
and small $f \in L^{\alpha, r}((0, \infty) ; \dot{\mathcal{N}}_{q,\mu, \infty}^{s}(\mathbb{R}^{n}))$.
The method is essentially adapted from \cite{ks6}
and we prove the maximal Lorentz regularity theorem
for the Stokes equation in the setting of homogeneous Besov-Morrey space.
When $\mu=0,$ one can obtain
$\dot{\mathcal{N}}_{p,\mu,r}^{-1+(n-\mu)/p}=\dot{B}_{p,r}^{-1+n/p}$
and we thus generalize the existence result in \cite{ks6}.

To state our results,
let us first denote by $\mathbb{P}$ the Helmholtz projection from
the Lebesgue space $L^{p}(1<p<\infty)$ onto
the subspace of solenoidal vector fields
$\mathbb{P}L^{p}=L^{p}_{\sigma}=\{f\in L^{p}; \mbox{div}f=0\}$ as a bounded operator.
It is well known that $\mathbb{P}$ is expressed as
\begin{equation*}
 \mathbb{P}=(\mathbb{P}_{ij})_{1\leq i,j\leq n},\ \
 \mathbb{P}_{ij}=\delta_{ij}+R_{i}R_{j}, \ \
 i,j=1,2,..., n,
\end{equation*}
where $\{\delta_{ij}\}_{1\leq i,j\leq n}$ is the Kronecker symbol and
$R_{i}=\partial_{i}(-\Delta)^{-1/2}(i=1,2,..., n)$
are the Riesz transforms.
From the Calder\'{o}n-Zygmund operator theory, for
$1<p<\infty, 0 \leq \mu< n$, the boundedness
of Riesz transform $R_{j}$ on the Morrey space
  $\mathcal{M}_{p,\mu}$(see Section 2 for the definition)
is established in [\cite{t}, Proposition 3.3]
and hence $\mathbb{P}$ is  bounded on $\mathcal{M}_{p,\mu}$.

 The original equations (\ref{ns}) can be rewritten as the following abstract equation:
\begin{eqnarray}\label{eq101}
\begin{cases}
  \partial_{t}u+Au+\mathbb{P}(u\cdot\nabla u)=\mathbb{P}f \ \ on \ \ (0, T),\\
  u(0)=a,
  \end{cases}
\end{eqnarray}
where $A=-\mathbb{P}\Delta$ is the Stokes operator.
The solution $u$ of (\ref{eq101}) is called a strong solution of (\ref{ns}).

Our first result is  the maximal Lorentz regularity theorem of the Stokes equations in
the homogenous Besov-Morrey space
$\dot{\mathcal{N}}_{q,\mu,\beta}^{s}$(see Section 2 for the definition).

\begin{thm}\label{thm1}
Let $1<p\leq q<\infty, 1<\alpha<\infty, 1\leq\beta\leq\infty,
1\leq r\leq\infty, 0\leq \mu< n$
and $s\in\mathbb{R}.$ Assume that
\begin{equation}\label{eqmax102}
\frac{n-\mu}{q}\leq\frac{n-\mu}{p}<\frac{2}{\alpha}+\frac{n-\mu}{q}.
\end{equation}
For  every $a \in \dot{\mathcal{N}}_{p, \mu, r}^{k}$ with
 $k=s+2+[(n-\mu)/p-(n-\mu)/q]-2/\alpha$ and every $f \in L^{\alpha, r}(0, T ; \dot{\mathcal{N}}_{q,\mu, \beta}^{s})$
with $0<T \leq \infty,$ there exists a unique solution $u$ of
\begin{eqnarray}\label{eqmaxs}
\left\{
\begin{array}{c}{\frac{d u}{d t}+A u=\mathbb{P} f \quad \text { a.e. } t \in(0, T) \text { in } \dot{\mathcal{N}}_{q,\mu, \beta}^{s}} \\
 {u(0)=a \quad \text { in } \dot{\mathcal{N}}_{p,\mu, r}^{k}}
 \end{array}\right.
\end{eqnarray}
in the class
\begin{equation}
u_{t}, A u \in L^{\alpha, r}(0, T ; \dot{\mathcal{N}}_{q,\mu, \beta}^{s}).\nonumber
\end{equation}
Moreover, such a solution $u$ is subject to the estimate
\begin{equation}\label{eqmax103}
\|u_{t}\|_{L^{\alpha, r}(0, T ; \dot{\mathcal{N}}_{q,\mu, \beta}^{s})}+\|A u\|_{L^{\alpha, r}(0, T ; \dot{\mathcal{N}}_{q, \mu, \beta}^{s})} \leq C\left(\|a\|_{\dot{\mathcal{N}}_{p,\mu, r}^{k}}+\|f\|_{L^{\alpha, r}(0, T ; \dot{\mathcal{N}}_{q,\mu, \beta}^{s})}\right),
\end{equation}
where $C=C(n,\mu, q, \alpha, r, \beta, s, p)$ is a constant independent of $0<T \leq \infty$.
\end{thm}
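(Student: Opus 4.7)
Via Duhamel's principle I split $u=v+w$, where $v(t)=e^{-tA}a$ resolves the initial datum and $w(t)=\int_0^t e^{-(t-\tau)A}\mathbb{P}f(\tau)\,d\tau$ carries the forcing with zero initial value. Linearity reduces the problem to establishing the a priori bound (\ref{eqmax103}) separately for $v$ and $w$, and uniqueness then follows by applying that bound to the difference of two candidate solutions. All estimates are performed block-by-block on the Littlewood--Paley pieces $\Delta_j$ entering the definition of the Besov--Morrey norms, using that $A$, $\mathbb{P}$ and $\Delta_j$ commute on the solenoidal data at hand, together with the $\mathcal{M}_{p,\mu}$-boundedness of $\mathbb{P}$ already recalled in the excerpt.

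For the semigroup piece, since $\partial_t v=-Av$, the task is
\[
\|Ae^{-tA}a\|_{L^{\alpha,r}(0,T;\dot{\mathcal{N}}_{q,\mu,\beta}^{s})}\lesssim\|a\|_{\dot{\mathcal{N}}_{p,\mu,r}^{k}}.
\]
On each dyadic scale this reduces to the kernel estimate $\|e^{-tA}\Delta_j g\|_{\mathcal{M}_{q,\mu}}\lesssim e^{-ct2^{2j}}\|\Delta_j g\|_{\mathcal{M}_{q,\mu}}$ combined with the Morrey--Bernstein embedding $\|\Delta_j g\|_{\mathcal{M}_{q,\mu}}\lesssim 2^{j[(n-\mu)/p-(n-\mu)/q]}\|\Delta_j g\|_{\mathcal{M}_{p,\mu}}$, valid for $p\le q$. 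The specific value $k=s+2+[(n-\mu)/p-(n-\mu)/q]-2/\alpha$ is chosen so that, after measuring the $j$-dependent time function $2^{2j}e^{-ct2^{2j}}$ in the $L^{\alpha,r}(dt)$-quasinorm, the resulting $2^{jk}$-weight in $\ell^r$ reproduces $\|a\|_{\dot{\mathcal{N}}_{p,\mu,r}^{k}}$; this is essentially a trace-type identification in the spirit of the classical real-interpolation description of Besov spaces.

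For the Duhamel piece the heart of the matter is the maximal Lorentz regularity
\[
\bigl\|Aw\bigr\|_{L^{\alpha,r}(0,T;\dot{\mathcal{N}}_{q,\mu,\beta}^{s})}\lesssim\|f\|_{L^{\alpha,r}(0,T;\dot{\mathcal{N}}_{q,\mu,\beta}^{s})}.
\]
I would first prove the strong-type $L^\alpha$ version for every $1<\alpha<\infty$: after frequency localization by $\Delta_j$, the operator acts as a scalar time-convolution of $\Delta_j\mathbb{P}f(\tau)$ with the kernel $\tau\mapsto 2^{2j}e^{-c\tau 2^{2j}}$, whose $L^1(\mathbb{R}_+)$-norm is bounded uniformly in $j$. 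Young's inequality in time yields the bound at fixed frequency, and Minkowski in $j$ delivers the $\ell^\beta$-summation defining $\dot{\mathcal{N}}_{q,\mu,\beta}^{s}$. The Lorentz refinement is then obtained by real interpolation in the time variable between two nearby indices $\alpha_0<\alpha<\alpha_1$, using $L^{\alpha,r}(X)=(L^{\alpha_0}(X),L^{\alpha_1}(X))_{\theta,r}$ with $X=\dot{\mathcal{N}}_{q,\mu,\beta}^{s}$.

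The main obstacle will be checking that the three-parameter scaling (\ref{eqmax102}) together with the formula for $k$ truly balances the Bernstein loss $(n-\mu)/p-(n-\mu)/q$, the maximal-regularity gain $2/\alpha$ and the base regularity $s+2$ at the level of each dyadic block, and that the constants so obtained are genuinely independent of $T\le\infty$. The strict inequality in (\ref{eqmax102}) is precisely what rules out degeneration when $p<q$, while the equality case corresponds to the familiar single-space Besov maximal regularity. Once both estimates are in place with constants depending only on $(n,\mu,q,\alpha,r,\beta,s,p)$, summing yields (\ref{eqmax103}), and existence follows because the formally constructed Duhamel solution then lies in the required class and satisfies (\ref{eqmaxs}) pointwise almost everywhere.
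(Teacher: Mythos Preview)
Your decomposition $u=e^{-tA}a+\int_0^t e^{-(t-\tau)A}\mathbb{P}f\,d\tau$ and the reduction to two separate a priori bounds match the paper exactly. The difference, and the gap, is in how you handle the Duhamel piece.

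Your argument for maximal regularity is: localize by $\Delta_j$, observe that $\Delta_j A w$ is a time-convolution with kernel $2^{2j}e^{-c\tau 2^{2j}}$ of uniformly bounded $L^1$-mass, apply Young at each $j$, and then ``Minkowski in $j$ delivers the $\ell^\beta$-summation.'' The problem is the order of norms. Young gives you
\[
\bigl\| 2^{js}\|\Delta_j Aw\|_{\mathcal{M}_{q,\mu}}\bigr\|_{L^\alpha_t}\;\lesssim\;\bigl\| 2^{js}\|\Delta_j \mathbb{P}f\|_{\mathcal{M}_{q,\mu}}\bigr\|_{L^\alpha_t},
\]
and taking $\ell^\beta_j$ of both sides yields boundedness on the mixed space $\ell^\beta_j\bigl(L^\alpha_t(\mathcal{M}_{q,\mu})\bigr)$. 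But the space in the theorem is $L^\alpha_t\bigl(\dot{\mathcal{N}}^s_{q,\mu,\beta}\bigr)=L^\alpha_t\bigl(\ell^\beta_j(\mathcal{M}_{q,\mu})\bigr)$, with the norms in the \emph{other} order. Minkowski lets you pass from $L^\alpha(\ell^\beta)$ to $\ell^\beta(L^\alpha)$ only when $\beta\le\alpha$, and from $\ell^\beta(L^\alpha)$ back to $L^\alpha(\ell^\beta)$ only when $\alpha\le\beta$; the two conversions together force $\alpha=\beta$. Since the statement allows $1\le\beta\le\infty$ and $1<\alpha<\infty$ independently (in the applications $\beta=\infty$ is essential), this is a genuine obstruction, and the $j$-dependence of the kernel prevents you from taking $\ell^\beta$ \emph{before} the time convolution.

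The paper avoids this entirely by \emph{not} working block-by-block for the Duhamel term. It invokes maximal $L^\alpha$-regularity in the homogeneous Sobolev--Morrey scale $\mathcal{M}^{s_i}_{q,\mu}$ (taken as a black box from Amann and Adams) at two regularity levels $s_0<s<s_1$, and then real-interpolates in the target space: since $(\mathcal{M}^{s_0}_{q,\mu},\mathcal{M}^{s_1}_{q,\mu})_{\theta,\beta}=\dot{\mathcal{N}}^{s}_{q,\mu,\beta}$, the free interpolation parameter produces the third index $\beta$ with no relation to $\alpha$. A second real interpolation in the time exponent, between $L^{\alpha_0}$ and $L^{\alpha_1}$ with functor parameter $r$, then yields the Lorentz space $L^{\alpha,r}$. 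For the semigroup piece the paper proceeds analogously (its Proposition~\ref{propmax201}): two endpoint smoothing estimates $\dot{\mathcal{N}}^{k_i}_{p,\mu,\infty}\to L^{\alpha_i,\infty}(0,\infty)$ from Lemma~\ref{lem3}(iii), followed by real interpolation $(\cdot,\cdot)_{\theta,r}$, which is exactly what makes the Lorentz index $r$ coincide with the third Besov--Morrey index of the initial-data space. Your phrase ``trace-type identification'' gestures at this, but the mechanism that actually ties $r$ to both roles is the interpolation functor, not a direct block computation.
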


Using Theorem \ref{thm1}, we  establish the existence and
uniqueness of local strong solutions to (\ref{ns}) for arbitrary large initial
data $a$ and large external force $f$ in the setting of homogenous
Besov-Morrey space. Our second results now reads:
\begin{thm}\label{thm2}
Let $1<q<\infty, 1<\alpha<\infty, 0\leq \mu<n$ and $s>-1$
 satisfy $2 / \alpha+(n-\mu) / q-s=3$.
Let $1 \leq r \leq \infty$.
Assume that $1 \leq p \leq q$ satisfies $(\ref{eqmax102}).$\\
(i)\ \ In case $1 \leq r<\infty.$
For every $a \in \dot{\mathcal{N}}_{p,\mu, r}^{-1+(n-\mu) / p}$ and $f \in L^{\alpha, r}(0, T ; \dot{\mathcal{N}}_{q,\mu, \infty}^{s})$ with $0<T \leq \infty$,
there exists $0<T_{*} \leq T$ and a unique solution $u$ on $\left(0, T_{*}\right)$ of
\begin{equation}\label{eqmaxns}
\left\{\begin{array}{l}{\frac{d u}{d t}+A u+\mathbb{P}(u \cdot \nabla u)=\mathbb{P} f \quad \text { a.e. } t \in\left(0, T_{*}\right) \text { in } \dot{\mathcal{N}}_{q,\mu, \infty}^{s}} \\ {u(0)=a \quad \text { in } \dot{\mathcal{N}}_{p,,\mu, r}^{-1+(n-\mu)/ p}}\end{array}\right.
\end{equation}
in the class
\begin{equation}
u_{t}, A u \in L^{\alpha, r}(0, T_{*} ; \dot{\mathcal{N}}_{q,\mu,  \infty}^{s}).
\end{equation}
Moreover, such a solution $u$ satisfies that
\begin{equation}\label{eqmax105}
u \in L^{\alpha_{0}, r}(0, T_{*} ; \dot{\mathcal{N}}_{q_{0},\mu, 1}^{s_{0}}) \quad \text { for } 2 / \alpha_{0}+(n-\mu) / q_{0}-s_{0}=1
\end{equation}
with $q \leq q_{0}, \alpha<\alpha_{0} $ and $\max \{s, (n-\mu) / p-1\}<s_{0}$.\\
(ii)\ \ In case $r=\infty$.
In addition to $(\ref{eqmax102}),$ assume that $n-\mu<p \leq q<\infty.$  There is a constant
$\eta=\eta(n,\mu, q, \alpha, s, p)$ such that if $a \in
 \dot{\mathcal{N}}_{p, \mu,\infty}^{-1+(n-\mu) / p}$ and $f \in L^{\alpha, \infty}(0, T ; \dot{\mathcal{N}}_{q,\mu, \infty}^{s})$ satisfy
\begin{equation}\label{eqmax106}
\sup _{N \leq j<\infty} 2^{(-1+\frac{n-\mu}{p}) j}\left\|\varphi_{j} * a\right\|_{\mathcal{M}_{p,\mu}}+\limsup_{t \rightarrow \infty} t \left|\{\tau \in(0, T) ;\|f(\tau)\|_{\dot{\mathcal{N}}_{q,\mu, \infty}^{s}}>t\}\right|^{1 / \alpha} \leq \eta
\end{equation}
for some $N \in \mathbb{Z},$ then there exist $0<T_{*} \leq T$
and a solution $u$ of $(\ref{eqmaxns})$ on $\left(0, T_{*}\right)$ in the class
\begin{equation}\label{eqmax107}
u_{t}, A u \in L^{\alpha, \infty}(0, T_{*} ; \dot{\mathcal{N}}_{q,\mu,  \infty}^{s}).
\end{equation}
Moreover, such a solution  $u$ satisfies that
\begin{equation}\label{eqmax108}
u \in L^{\alpha_{0}, \infty}(0, T_{*} ; \dot{\mathcal{N}}_{q_{0},\mu, 1}^{s_{0}}) \quad \text { for } 2 / \alpha_{0}+(n-\mu) / q_{0}-s_{0}=1
\end{equation}
with $q \leq q_{0}, \alpha<\alpha_{0}$ and $\max \{s, (n-\mu) / p-1\}<s_{0}$.

Concerning the uniqueness, there is a constant $\kappa=\kappa(n,\mu, q, \alpha, s, p)>0$ such that if the initial data $a$ and the solution $u$ of $(\ref{eqmaxns})$ on $(0, T_{*})$ in the class $(\ref{eqmax107})$ satisfy
\begin{eqnarray}\label{eqmax109}
&&\sup _{N \leq j<\infty} 2^{(-1+\frac{n-\mu}{p}) j}\|\varphi_{j} * a\|_{\mathcal{M}_{p,\mu}}\nonumber\\
&&+\limsup _{t \rightarrow \infty} t \bigg|\{\tau \in(0, T_{*}) ;\|u_{\tau}(\tau)\|_{\dot{\mathcal{N}}_{{q,\mu, \infty}}^{s}}
+\|A u(\tau)\|_{\dot{\mathcal{N}}_{q,\mu, \infty}^{s}}>t\}\bigg|^{\frac{1}{\alpha}} \leq \kappa
\end{eqnarray}
for some $N \in \mathbb{N},$ then $u$ is unique.
\end{thm}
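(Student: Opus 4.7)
The plan is a Picard-type fixed-point argument built on Theorem~\ref{thm1}. Decompose the candidate solution as $u=v+w$, where $v$ is the free evolution solving $\partial_{t}v+Av=\mathbb{P}f$ with $v(0)=a$, and the remainder $w$ satisfies $\partial_{t}w+Aw=-\mathbb{P}((v+w)\cdot\nabla(v+w))$ with $w(0)=0$. A short calculation shows that under the scaling $2/\alpha+(n-\mu)/q-s=3$, the abstract exponent $k$ in Theorem~\ref{thm1} collapses exactly to $k=-1+(n-\mu)/p$, so the theorem applies verbatim to $v$ and delivers the maximal-regularity bound for $v_{t}$ and $Av$. The first step is to upgrade this bound to a Serrin-type membership $v\in L^{\alpha_{0},r}(0,T;\dot{\mathcal{N}}^{s_{0}}_{q_{0},\mu,1})$ by real interpolation between $v(0)\in\dot{\mathcal{N}}^{-1+(n-\mu)/p}_{p,\mu,r}$ and $Av\in L^{\alpha,r}\dot{\mathcal{N}}^{s}_{q,\mu,\infty}$, combined with Besov-Morrey Sobolev embeddings that trade differentiability for integrability; the Serrin identity $2/\alpha_{0}+(n-\mu)/q_{0}-s_{0}=1$ is exactly the dimensional constraint of this interpolation, and the inequalities $q\leq q_{0}$, $\alpha<\alpha_{0}$, $\max\{s,(n-\mu)/p-1\}<s_{0}$ emerge from requiring strict inclusions at each endpoint.

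The core of the proof is a bilinear estimate
\[
 \|\mathbb{P}(u_{1}\cdot\nabla u_{2})\|_{L^{\alpha,r}(0,T;\dot{\mathcal{N}}^{s}_{q,\mu,\infty})}
 \leq C\,\|u_{1}\|_{L^{\alpha_{0},r}\dot{\mathcal{N}}^{s_{0}}_{q_{0},\mu,1}}\,
 \|u_{2}\|_{L^{\alpha_{0},r}\dot{\mathcal{N}}^{s_{0}}_{q_{0},\mu,1}}.
\]
Hölder in Lorentz time yields $L^{\alpha_{0},r}\cdot L^{\alpha_{0},r}\hookrightarrow L^{\alpha_{0}/2,r/2}\hookrightarrow L^{\alpha,r}$ under $\alpha<\alpha_{0}$. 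In space, a Bony-type paraproduct decomposition in the Besov-Morrey scale reduces each dyadic piece to a product handled by Hölder on Morrey spaces, followed by a Morrey embedding of the form $\mathcal{M}_{q_{0}/2,2\mu}\hookrightarrow\mathcal{M}_{q,\mu}$ when the exponents line up, with $\mathbb{P}$ absorbed by the established boundedness of the Riesz transforms on $\mathcal{M}_{q,\mu}$. Feeding this bilinear bound into Theorem~\ref{thm1} applied to the $w$-equation turns the nonlinear problem into a contraction on a ball $\{w:\|w_{t}\|_{L^{\alpha,r}\dot{\mathcal{N}}^{s}_{q,\mu,\infty}}+\|Aw\|_{L^{\alpha,r}\dot{\mathcal{N}}^{s}_{q,\mu,\infty}}\leq M\}$ whose contraction constant is controlled by $\|v\|_{L^{\alpha_{0},r}\dot{\mathcal{N}}^{s_{0}}_{q_{0},\mu,1}}+M$.

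For part~(i) with $r<\infty$, the norm $L^{\alpha,r}$ is absolutely continuous, so $\|v\|_{L^{\alpha_{0},r}(0,T_{*};\dot{\mathcal{N}}^{s_{0}}_{q_{0},\mu,1})}\to 0$ as $T_{*}\to 0^{+}$; choosing $T_{*}$ small closes the contraction and returns a unique fixed point $w$, whence $u=v+w$ solves \eqref{eqmaxns} with the stated membership \eqref{eqmax105}. For part~(ii) with $r=\infty$, absolute continuity fails, and smallness must come directly from \eqref{eqmax106}. I would split $a=\sum_{j<N}\varphi_{j}*a+\sum_{j\geq N}\varphi_{j}*a$: the high-frequency tail is controlled directly by the hypothesis via the Stokes semigroup estimate $\|e^{-tA}\varphi_{j}*a\|_{\mathcal{M}_{q,\mu}}\lesssim e^{-ct\,2^{2j}}\|\varphi_{j}*a\|_{\mathcal{M}_{p,\mu}}$, while the low-frequency part contributes a bounded-in-time quantity whose $L^{\alpha,\infty}(0,T_{*})$ norm is $\lesssim T_{*}^{1/\alpha}$ and hence can be absorbed by choosing $T_{*}$ small (the hypothesis $n-\mu<p$ guaranteeing the correct sign of the Besov-Morrey index needed to sum the low frequencies). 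The external-force term is handled analogously using the distribution-function condition on $f$. Uniqueness under \eqref{eqmax109} is obtained by subtracting two candidates and closing the bilinear estimate with the small constant $\kappa$.

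The main obstacle is the bilinear estimate: transporting Bony's paraproduct calculus to Besov-Morrey with Lorentz-in-time integrability requires careful bookkeeping of the Morrey index $\mu$, which naively doubles under pointwise products and must be restored by Morrey embeddings whose exponents have to align with the scaling. A secondary difficulty is the endpoint case~(ii): extracting smallness from \eqref{eqmax106} without absolute continuity demands simultaneous control of high and low frequencies of the free evolution, and the extra hypothesis $n-\mu<p$ seems essential to guarantee enough Besov-Morrey decay for the low-frequency portion. Once both ingredients are in place, the fixed-point scheme runs in parallel with the Besov-space argument of \cite{ks6}.
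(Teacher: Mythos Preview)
Your proposal is essentially correct and follows the same route as the paper: the decomposition $u=u_{0}+v$ (your $v+w$), maximal regularity from Theorem~\ref{thm1} for the linear part, an embedding into the Serrin class $L^{\alpha_{0},r}(0,T;\dot{\mathcal{N}}^{s_{0}}_{q_{0},\mu,1})$ via real interpolation (the paper's Lemma~\ref{lemmax31}), a paraproduct bilinear estimate (Proposition~\ref{prop32} and Lemma~\ref{lemmax32}), Picard iteration in the graph-norm space $X_{T}$, absolute continuity of $L^{\alpha,r}$ for $r<\infty$ to get smallness, and for $r=\infty$ the high/low frequency splitting of $a$ together with a level-set decomposition of $\|f(t)\|$; the hypothesis $n-\mu<p$ is used exactly as you say, to sum the low-frequency part $\sum_{j\leq N}\varphi_{j}*a$ in $\mathcal{M}_{p,\mu}$.

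Two small corrections to the sketch. First, the paper fixes $\alpha_{0}=2\alpha$ (see \eqref{eqmax307}), which is what makes the temporal H\"older step $L^{\alpha_{0},r}\cdot L^{\alpha_{0},r}\hookrightarrow L^{\alpha,r}$ close exactly; merely $\alpha<\alpha_{0}$ is not enough. Second, your ``main obstacle'' about the Morrey index doubling is not an issue: under the Morrey H\"older inequality (Lemma~\ref{lem1}(iii)) with $p_{1}=p_{2}$ and $\mu_{1}=\mu_{2}=\mu$ one gets $\mu_{3}=\mu$, so no restoration embedding is needed; the paper instead chooses asymmetric exponents $q_{1},q_{2}$ with $1/q_{1}+1/q_{2}=1/q$ at the same $\mu$ and lands directly in $\dot{\mathcal{N}}^{s+1}_{q,\mu,\infty}$.
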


Our third result is  the global existence and uniqueness of strong solutions to (\ref{ns}) for small $a$ and $f.$

\begin{thm}\label{thm3}
Let $1<q<\infty, 1<\alpha<\infty, 0\leq\mu<n$ and $s>-1$ satisfy $2 / \alpha+(n-\mu )/ q-s=3.$ Let $1 \leq r \leq \infty$.
Assume that $1 \leq p \leq q$ satisfies $(\ref{eqmax102}) .$ There exists a constant $\varepsilon_{*}=\varepsilon_{*}(n, \mu, q, \alpha, s, p, r)>0$ such that
if $a \in \dot{\mathcal{N}}_{p,\mu, r}^{-1+(n-\mu) / p}$ and $f \in L^{\alpha, r}(0, \infty ; \dot{\mathcal{N}}_{q,\mu, \infty}^{s})$ satisfy
\begin{equation}\label{eqmax111}
\|a\|_{ \dot{\mathcal{N}}_{p,\mu, r}^{-1+(n-\mu) / p}}+\|f\|_{L^{\alpha, r}(0, \infty ; \dot{\mathcal{N}}_{q,\mu, \infty}^{s})} \leq \varepsilon_{*},
\end{equation}
then there is a solution $u$ of $(\ref{eqmaxns})$ on $(0, \infty)$ in the class
\begin{equation}\label{eqmax112}
u_{t}, A u \in L^{\alpha, r}(0, \infty ; \dot{\mathcal{N}}_{q,\mu, \infty}^{s}).
\end{equation}
Moreover, such a solution $u$ satisfies that
\begin{equation}\label{eqmax113}
u \in L^{\alpha_{0}, r}(0, \infty ; \dot{\mathcal{N}}_{q_{0},\mu, 1}^{s_{0}}) \quad \text { for } 2 / \alpha_{0}+(n-\mu) / q_{0}-s_{0}=1
\end{equation}
with $q \leq q_{0}, \alpha<\alpha_{0}$ and $\max \{s, (n-\mu) / p-1\}<s_{0}$.

In the case $1 \leq r<\infty,$ the solution $u$ of $(\ref{eqmaxns})$ on
$(0, \infty)$ in the class $(\ref{eqmax112})$ is unique.
In the case $ r=\infty,$
the uniqueness holds under the same condition as $(\ref{eqmax109})$ with $T_{*}$ replaced by $\infty$
provided $n-\mu<p \leq q.$
\end{thm}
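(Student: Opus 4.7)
The plan is to prove Theorem~\ref{thm3} by a Banach contraction argument driven by the linear maximal Lorentz regularity estimate of Theorem~\ref{thm1}. A quick check shows that under the scaling relation $2/\alpha+(n-\mu)/q-s=3$ imposed here, the trace-space index $k$ of Theorem~\ref{thm1} specialises to $k=-1+(n-\mu)/p$, so the data space appearing in \eqref{eqmax111} is exactly the one Theorem~\ref{thm1} prescribes. I would work on the complete metric space
\begin{equation*}
X:=\{u:\,u_{t},Au\in L^{\alpha,r}(0,\infty;\dot{\mathcal{N}}_{q,\mu,\infty}^{s})\},\qquad \|u\|_{X}:=\|u_{t}\|_{L^{\alpha,r}(\dot{\mathcal{N}}_{q,\mu,\infty}^{s})}+\|Au\|_{L^{\alpha,r}(\dot{\mathcal{N}}_{q,\mu,\infty}^{s})},
\end{equation*}
and define $\Phi(u):=v$, where $v$ solves $v_{t}+Av=\mathbb{P}(f-u\cdot\nabla u)$, $v(0)=a$. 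Theorem~\ref{thm1} furnishes $v$ unambiguously and yields
\begin{equation*}
\|\Phi(u)\|_{X}\leq C\bigl(\|a\|_{\dot{\mathcal{N}}_{p,\mu,r}^{-1+(n-\mu)/p}}+\|f\|_{L^{\alpha,r}(\dot{\mathcal{N}}_{q,\mu,\infty}^{s})}+\|u\cdot\nabla u\|_{L^{\alpha,r}(\dot{\mathcal{N}}_{q,\mu,\infty}^{s})}\bigr).
\end{equation*}

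The crux is therefore a bilinear estimate
\begin{equation*}
\|u\cdot\nabla u\|_{L^{\alpha,r}(0,\infty;\dot{\mathcal{N}}_{q,\mu,\infty}^{s})}\leq C\,\|u\|_{L^{\alpha_{0},r}(0,\infty;\dot{\mathcal{N}}_{q_{0},\mu,1}^{s_{0}})}^{2},
\end{equation*}
where the exponents are forced by scaling to obey $2/\alpha_{0}+(n-\mu)/q_{0}-s_{0}=1$, together with a continuous embedding
\begin{equation*}
\|u\|_{L^{\alpha_{0},r}(0,\infty;\dot{\mathcal{N}}_{q_{0},\mu,1}^{s_{0}})}\leq C\bigl(\|a\|_{\dot{\mathcal{N}}_{p,\mu,r}^{-1+(n-\mu)/p}}+\|u\|_{X}\bigr)
\end{equation*}
of the maximal regularity class into the scaling-invariant Serrin-type class. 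I would prove the embedding by writing $u=e^{-tA}a+(u-e^{-tA}a)$, controlling the semigroup term via the real-interpolation trace characterization of $\dot{\mathcal{N}}_{p,\mu,r}^{-1+(n-\mu)/p}$ as the natural initial data space of the Stokes flow in $L^{\alpha_{0},r}(\dot{\mathcal{N}}_{q_{0},\mu,1}^{s_{0}})$, and the remainder via Duhamel's formula combined with Sobolev-type embeddings along constant-scaling lines in the Besov-Morrey scale. The bilinear estimate itself would be obtained from a Bony paraproduct decomposition adapted to $\dot{\mathcal{N}}_{q,\mu,\infty}^{s}$, using the boundedness of the Helmholtz projection on $\mathcal{M}_{p,\mu}$ noted just before Theorem~\ref{thm1}; the conditions $s>-1$ and $s_{0}>\max\{s,(n-\mu)/p-1\}$ are precisely what render the low- and high-frequency sums convergent.

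Combining these ingredients gives $\|\Phi(u)\|_{X}\leq C\varepsilon_{*}+C\|u\|_{X}^{2}$ and the Lipschitz inequality $\|\Phi(u_{1})-\Phi(u_{2})\|_{X}\leq C(\|u_{1}\|_{X}+\|u_{2}\|_{X})\|u_{1}-u_{2}\|_{X}$. Choosing $\varepsilon_{*}$ sufficiently small then renders $\Phi$ a strict contraction on a closed ball of radius $O(\varepsilon_{*})$ in $X$, whose unique fixed point is the desired global strong solution, and the auxiliary regularity \eqref{eqmax113} follows at once from the embedding applied to the fixed point itself. Uniqueness in the case $1\leq r<\infty$ is immediate from the same Lipschitz inequality, since both candidate solutions in the class \eqref{eqmax112} automatically fall into the small ball on which $\Phi$ is contractive.

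The hard part is twofold. Analytically, the main obstacle is developing a full Bony-type paraproduct calculus and Sobolev embedding in the Besov-Morrey scale $\dot{\mathcal{N}}_{p,\mu,r}^{s}$: Morrey spaces are not a Banach algebra and lack the Sobolev embedding enjoyed by Lebesgue spaces, so one must work at the Littlewood-Paley level with Bernstein inequalities sensitive to the Morrey parameter $\mu$ and with the boundedness of Calder\'on-Zygmund operators on $\mathcal{M}_{p,\mu}$. The case $r=\infty$ is the second difficulty: $L^{\alpha,\infty}$ is non-separable and smallness of the global norm of $f$ does not propagate to arbitrary time-tails, which is exactly why the limsup-smallness condition \eqref{eqmax109} (with $T_{*}$ replaced by $\infty$) is imposed for uniqueness. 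Here I would run the contraction inside the closed subspace of $L^{\alpha,\infty}$ on which the distribution function tends to zero at infinity, following the strategy of \cite{ks6}, where truncation restores smallness and the same bilinear machinery applies verbatim.
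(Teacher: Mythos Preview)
Your existence argument is correct and essentially the same as the paper's: both set up a fixed-point scheme in the maximal-regularity class $X$, driven by Theorem~\ref{thm1} together with the embedding of Lemma~\ref{lemmax31} and the bilinear estimate of Lemma~\ref{lemmax32} (whose proof is exactly the paraproduct calculus you describe). The paper prefers to write $u=u_{0}+v$ with $u_{0}$ the linear Stokes flow and $v(0)=0$, then run successive approximations on $v$; your direct contraction map $\Phi$ on functions with $u(0)=a$ is an equivalent packaging. One minor imprecision: the self-map estimate should read $\|\Phi(u)\|_{X}\le C\varepsilon_{*}+C(\varepsilon_{*}+\|u\|_{X})^{2}$ rather than $C\varepsilon_{*}+C\|u\|_{X}^{2}$, because the embedding into $L^{\alpha_{0},r}(\dot{\mathcal{N}}_{q_{0},\mu,1}^{s_{0}})$ carries the data contribution $\|a\|$; on a ball of radius $O(\varepsilon_{*})$ this makes no difference.

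Your uniqueness argument for $1\le r<\infty$, however, has a genuine gap. You write that ``both candidate solutions in the class \eqref{eqmax112} automatically fall into the small ball on which $\Phi$ is contractive''. This is not justified: the theorem asserts uniqueness among \emph{all} solutions in the class $u_{t},Au\in L^{\alpha,r}(0,\infty;\dot{\mathcal{N}}_{q,\mu,\infty}^{s})$, with no a priori bound on the competing solution. Smallness of the data $(a,f)$ does not by itself force a second solution $\tilde u$ to have small $X$-norm; that is precisely what uniqueness would imply, so you cannot assume it. The paper's argument (inherited from the proof of Theorem~\ref{thm2}) instead exploits the absolute continuity of the $L^{\alpha,r}$-norm when $r<\infty$: given two solutions $u_{1},u_{2}$ in the class, one chooses $\sigma>0$ so small that
\[
\|e^{-tA}a\|_{L^{\alpha_{0},r}(\tau,\tau+\sigma;\dot{\mathcal{N}}_{q_{0},\mu,1}^{s_{0}})}<\tfrac{1}{4C},\qquad
\|\partial_{t}u_{i}\|_{L^{\alpha,r}(\tau,\tau+\sigma;\dot{\mathcal{N}}_{q,\mu,\infty}^{s})}+\|Au_{i}\|_{L^{\alpha,r}(\tau,\tau+\sigma;\dot{\mathcal{N}}_{q,\mu,\infty}^{s})}<\tfrac{1}{4C}
\]
for all $\tau\ge 0$ and $i=1,2$. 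The difference estimate then forces $u_{1}=u_{2}$ on $(0,\sigma)$, and one iterates in steps of length $\sigma$ to cover $(0,\infty)$. It is this local-in-time smallness of arbitrary solutions in the class, available only because $r<\infty$, that closes the argument---and it is exactly why the $r=\infty$ case requires the extra limsup hypothesis \eqref{eqmax109}.
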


 For the global strong solution $u$ of (\ref{ns})
obtained  in Theorem \ref{thm3},
if the initial data $a$ and
the external force $f$ has certain additional regularity,
then the strong solution $u$ has the corresponding regularity to that of $a$ and $f$. This can be stated in the following Theorem:

\begin{thm}\label{thm4}
Let $1 \leq p \leq q<\infty, 1<\alpha<\infty, 0\leq \mu<n$ and $-1<s$ be as in Theorem \ref{thm3} and let $1 \leq r \leq \infty$.
Let $1<\alpha^{*} \leq \alpha,$ $0\leq \mu^{*}\leq\mu<n$ and $1<p^{*} \leq q^{*}<\infty$ satisfy $(n-\mu^{*}) / p^{*}<2 / \alpha^{*}+(n-\mu^{*})/ q^{*},$ and let $-1<s^{*} \leq s$ and
$1 \leq r^{*} \leq \infty .$ There is a positive constant $\varepsilon_{*}^{\prime}=\varepsilon_{*}^{\prime}(n,\mu, \mu^{*}, q, \alpha, s, p, r, q^{*}, \alpha^{*}, s^{*}, p^{*}, r^{*}) \leq \varepsilon_{*}$ with the
same $\varepsilon_{*}$ as in Theorem \ref{thm3} such that if $a \in \dot{\mathcal{N}}_{p,\mu, r}^{-1+(n-\mu) / p} \cap \dot{\mathcal{N}}_{p^{*},\mu^{*},  r^{*}}^{k^{*}} $ with $ k^{*}=2+(n-\mu^{*}) / p^{*}-(2 / \alpha^{*}+(n-\mu^{*})/ q^{*}-s^{*})$
and $f \in L^{\alpha, r}(0, \infty ; \dot{\mathcal{N}}_{q,\mu, \infty}^{s}) \cap L^{\alpha^{*}, r^{*}}(0, \infty ; \dot{\mathcal{N}}_{q^{*},\mu^{*}, \infty}^{s^{*}})$ satisfy

\begin{equation}\label{eqmax126}
  \|a\|_{\dot{\mathcal{N}}_{p,\mu, r}^{-1+(n-\mu) / p}}+\|f\|_{L^{\alpha, r}(0, \infty ; \dot{\mathcal{N}}_{q,\mu, \infty}^{s})} \leq \varepsilon_{*}^{\prime},
\end{equation}
then the solution $u$ of (\ref{eqmaxns}) given by Theorem \ref{thm3} has the additional property that
\begin{equation}
u_{t}, A u \in L^{\alpha^{*}, r^{*}}(0, \infty ; \dot{\mathcal{N}}_{q^{*},\mu^{*}, \infty}^{s^{*}}).
\end{equation}
\end{thm}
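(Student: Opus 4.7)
The plan is to re-run the fixed point construction of Theorem \ref{thm3} simultaneously in the unstarred class $E:=L^{\alpha,r}(0,\infty;\dot{\mathcal{N}}_{q,\mu,\infty}^{s})$ and in the starred class $E^{*}:=L^{\alpha^{*},r^{*}}(0,\infty;\dot{\mathcal{N}}_{q^{*},\mu^{*},\infty}^{s^{*}})$, applying the maximal Lorentz regularity theorem (Theorem \ref{thm1}) with both sets of parameters, and then to identify the resulting fixed point with the solution $u$ furnished by Theorem \ref{thm3} via the uniqueness there.

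Concretely, for $v$ in a suitable ball I would define $\Psi(v):=w$ to be the solution of the linear Stokes problem
\begin{equation*}
w_{t}+Aw=\mathbb{P}f-\mathbb{P}(v\cdot\nabla v),\qquad w(0)=a.
\end{equation*}
Theorem \ref{thm1} applied with the unstarred parameters gives
\begin{equation*}
\|w_{t}\|_{E}+\|Aw\|_{E}\leq C\bigl(\|a\|_{\dot{\mathcal{N}}_{p,\mu,r}^{-1+(n-\mu)/p}}+\|f\|_{E}+\|v\cdot\nabla v\|_{E}\bigr),
\end{equation*}
and applied with the starred parameters---whose admissibility follows from the hypothesis $(n-\mu^{*})/p^{*}<2/\alpha^{*}+(n-\mu^{*})/q^{*}$ and the prescribed $k^{*}$---yields the analogous bound with $E$ replaced by $E^{*}$. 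The fixed point is then sought in a product ball controlled by $\delta$ in $E$ and by some $M$ in $E^{*}$, where $\delta$ is tied to $\varepsilon_{*}^{\prime}$ through \eqref{eqmax126} and $M$ is determined by the starred data; the contraction is driven by the fact that $\delta$ may be forced arbitrarily small by shrinking $\varepsilon_{*}^{\prime}$.

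The crucial ingredient is the bilinear estimate
\begin{equation*}
\|\mathbb{P}(v\cdot\nabla v)\|_{E^{*}}\leq C\|v\|_{\mathcal{Y}}\|v\|_{\mathcal{Y}^{*}},
\end{equation*}
where $\mathcal{Y}:=L^{\alpha_{0},r}(\dot{\mathcal{N}}_{q_{0},\mu,1}^{s_{0}})$ (as in \eqref{eqmax113}) and $\mathcal{Y}^{*}$ denotes its starred analogue. This rests on (i) the embedding of the maximal-regularity class $\{w_{t},Aw\in E\}$ into the Serrin-type class $\mathcal{Y}$ already exploited in Theorems \ref{thm2}--\ref{thm3}, together with its starred counterpart; (ii) H\"{o}lder's inequality in the Lorentz scale in $t$ matched to the scaling identity $2/\alpha^{*}+(n-\mu^{*})/q^{*}-s^{*}=3$; and (iii) a paraproduct-type product law on the homogeneous Besov-Morrey scale, morally of the form $\|fg\|_{\dot{\mathcal{N}}_{q^{*},\mu^{*},\infty}^{s^{*}}}\leq C\|f\|_{\dot{\mathcal{N}}_{q_{0},\mu,1}^{s_{0}}}\|g\|_{\dot{\mathcal{N}}_{q_{0}^{*},\mu^{*},1}^{s_{0}^{*}+1}}$ plus a symmetric counterpart. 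The hard part will be establishing (iii) on the Besov-Morrey scale and checking that admissible indices $(\alpha_{0},q_{0},s_{0})$ and $(\alpha_{0}^{*},q_{0}^{*},s_{0}^{*})$ can be chosen to meet all the scaling constraints simultaneously. Once this is in hand, the contraction closes on a sufficiently small ball, and the fixed point $\tilde u\in E\cap E^{*}$ is identified with $u$ by the uniqueness assertion of Theorem \ref{thm3}, giving the desired $u_{t},Au\in E^{*}$.
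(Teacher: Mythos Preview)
Your overall architecture is the same as the paper's: run the iteration simultaneously in the unstarred and starred maximal-regularity classes, with the contraction in the starred norm driven by the smallness in the unstarred one. The paper implements this by returning to the successive approximants $\{v_j\}$ from Theorem~\ref{thm3} and showing inductively that they form a Cauchy sequence in $Y:=\{v:v_t,Av\in E^{*}\}$, which sidesteps any appeal to uniqueness (relevant because uniqueness in Theorem~\ref{thm3} is only conditional when $r=\infty$).

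There is, however, a genuine error in your sketch: the identity $2/\alpha^{*}+(n-\mu^{*})/q^{*}-s^{*}=3$ is \emph{not} among the hypotheses of Theorem~\ref{thm4}, and the whole point of the theorem is to propagate regularity into a \emph{non}-scaling-critical class. So your H\"older splitting in $t$ cannot be ``matched to the scaling identity'' as you claim, and $\mathcal{Y}^{*}$ is not the naive starred analogue of $\mathcal{Y}$. The correct mixed bilinear estimate is exactly Lemma~\ref{lemmax33}: one chooses $\alpha_{0}^{*}$ via $1/\alpha_{0}^{*}=1/\alpha^{*}-1/\alpha_{0}$ (possible since $\alpha^{*}\leq\alpha<\alpha_{0}$) and $q_{0}^{*},s_{0}^{*}$ via the relation $2/\alpha_{0}^{*}+(n-\mu^{*})/q_{0}^{*}-s_{0}^{*}=2/\alpha^{*}+(n-\mu^{*})/q^{*}-s^{*}-2$ of Lemma~\ref{lemmax31}, and then the paraproduct law (Proposition~\ref{prop32}, already in the paper) together with the embeddings \eqref{eqmax325} gives $\|\mathbb{P}(u\cdot\nabla v)\|_{E^{*}}\leq C\|u\|_{L^{\alpha_{0},r}(\dot{\mathcal{N}}_{q_{0},\mu,\infty}^{s_{0}})}\bigl(\|v(0)\|_{\dot{\mathcal{N}}_{p^{*},\mu^{*},r^{*}}^{k^{*}}}+\|v\|_{Y}\bigr)$. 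Once you replace your (ii) with this, the rest of your plan goes through.
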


This paper is organized as follows.
In section 2,
we collect some useful lemmas
and prove
the  maximal Lorentz regularity theorem
for  the Stokes equations
 in the setting of   the homogeneous
Besov-Morrey space.
 In Section 3, we present
various estimates for
the nonlinear term  in the homogeneous Besov-Morrey space.
 Section 4 is devoted to the proof of
  the existence and uniqueness theorem of local strong solutions to (\ref{ns})
for arbitrary large data $a$ and $f$.
The global strong solutions for small $a$ and $f$ are also discussed.

\section{Preliminaries}

In this section, we first collect some well-known properties about
Sobolev-Morrey and Besov-Morrey spaces,
then we prove Theorem \ref{thm1}.

\subsection{Besov-Morrey space}

The basic properties of Morrey and Besov-Morrey space is reviewed in the present
subsection for the reader's convenience,
more details can be found in \cite{ap,hw12,hw17,hw25,t}.

Let $Q_{r}(x_{0})$  be the open ball in $\mathbb{R}^{n}$ centered at $x_{0}$ and with radius $r>0.$
Given two parameters $1\leq p<\infty$ and $0\leq \mu<n,$
the Morrey spaces $\mathcal{M}_{p, \mu}=\mathcal{M}_{p, \mu}(\mathbb{R}^{n})$ is defined to be
the set of functions $f\in L^{p}(Q_{r}(x_{0}))$ such that
\begin{equation}\label{extraeq201}
  \|f\|_{p,\mu}\triangleq
  \sup_{x_{0}\in \mathbb{R}^{n}}\sup_{r>0}r^{-\mu/p}\|f\|_{L^{p}(Q_{r}(x_{0}))}<\infty
\end{equation}
which is a Banach space endowed with norm (\ref{extraeq201}).
For $s\in \mathbb{R}$ and $1\leq p<\infty,$ the homogenous Sobolev-Morrey space
 $\mathcal{M}_{p,\mu}^{s}=(-\Delta)^{-s/2}\mathcal{M}_{p, \mu}$ is the Banach space
 with norm
 \begin{equation}\label{eq202}
  \|f\|_{\mathcal{M}_{p,\mu}^{s}}=\|(-\Delta)^{s/2}f\|_{p,\mu}.\nonumber
 \end{equation}

Taking $p=1,$ we have $\|f\|_{L^{1}\left(Q_{r}\left(x_{0}\right)\right)}$
denotes the total variation of $f$ on open ball  $Q_{r}\left(x_{0}\right)$
and $\mathcal{M}_{1, \mu}$
stands for space of signed measures. In particular, $\mathcal{M}_{1, 0}=\mathcal{M}$
 is  the space of finite measures. For $p>1,$ we have $\mathcal{M}_{p, 0}=L^{p}$
  and $\mathcal{M}_{p, 0}^{s}=\dot{H}_{p}^{s}$ is the well known Sobolev space.
  The space $L^{\infty}$ corresponds to $\mathcal{M}_{\infty, \mu}.$
  Morrey and Sobolev-Morrey spaces present the following scaling
\begin{equation}\label{eq203}
\|f(\lambda \cdot)\|_{p, \mu}=\lambda^{-\frac{n-\mu}{p}}\|f\|_{p, \mu}\nonumber
\end{equation}
and
\begin{equation}\label{eq204}
\|f(\lambda \cdot)\|_{\mathcal{M}_{p, \mu}^{s}}=\lambda^{s-\frac{n-\mu}{p}}\|f\|_{\mathcal{M}_{p, \mu}^{s}},\nonumber
\end{equation}
where the  exponent $s-\frac{n-\mu}{p}$ is called scaling index and $s$ is called regularity index.
We have that
\begin{equation}\label{eq205}
(-\Delta)^{l / 2} \mathcal{M}_{p, \mu}^{s}=\mathcal{M}_{p, \mu}^{s-l}.\nonumber
\end{equation}
Morrey spaces contain Lebesgue and weak-$L^{p}$, with the same scaling index. Precisely, we have
the continuous proper inclusions
\begin{equation}\label{eq206}
L^{p}\left(\mathbb{R}^{n}\right) \nsubseteq \operatorname{weak}-L^{p}\left(\mathbb{R}^{n}\right) \nsubseteq \mathcal{M}_{r, \mu}\left(\mathbb{R}^{n}\right),\nonumber
\end{equation}
where $r<p$ and $\mu=n(1-r/p)$(see e.g. \cite{hw23}).

Let $\mathcal{S}(\mathbb{R}^{n})$ and $\mathcal{S}^{'}(\mathbb{R}^{n})$ be the Schwartz space
and the tempered distributions, respectively. Let $\varphi\in \mathcal{S}(\mathbb{R}^{n})$
be nonnegative radial function such that
\begin{equation}
\operatorname{supp}(\varphi) \subset\left\{\xi \in \mathbb{R}^{n} ; \frac{1}{2}<|\xi|<2\right\}\nonumber
\end{equation}
and
\begin{equation}
\sum_{j=-\infty}^{\infty} \varphi_{j}(\xi)=1, \text { for all } \xi \neq 0,\nonumber
\end{equation}
where $\varphi_{j}(\xi)=\varphi(2^{-j}\xi)$.
Let $\phi(x)=\mathcal{F}^{-1}(\varphi)(x)$
and $\phi_{j}(x)=\mathcal{F}^{-1}\left(\varphi_{j}\right)(x)=2^{j n} \phi\left(2^{j} x\right)$
where $\mathcal{F}^{-1}$ stands for inverse Fourier transform.
For $1 \leq q<\infty, 0 \leq \mu<n$ and $s \in \mathbb{R}$,
the homogeneous Besov-Morrey space
$\dot{\mathcal{N}}_{q, \mu, r}^{s}\left(\mathbb{R}^{n}\right)$
 ($\dot{\mathcal{N}}_{q, \mu, r}^{s}$ for short)
is defined to be the set of $u\in\mathcal{S}^{'}(\mathbb{R}^{n}) $,
 modulo polynomials $\mathcal{P},$
 such that
 $\mathcal{F}^{-1} \varphi_{j}(\xi) \mathcal{F} u \in \mathcal{M}_{q, \mu}$ for all $j \in \mathbb{Z}$ and
\begin{equation}\label{eq207}
\|u\|_{\dot{\mathcal{N}}_{q, \mu, r}^{s}}=\left\{\begin{array}{ll}{\left(\sum_{j \in \mathbb{Z}}\left(2^{j s}\left\|\phi_{j} * u\right\|_{q, \mu}\right)^{r}\right)^{\frac{1}{r}}<\infty,} & {1 \leq r<\infty} \\ {\sup _{j \in \mathbb{Z}} 2^{j s}\left\|\phi_{j} * u\right\|_{q, \mu}<\infty,} & {r=\infty}\end{array}\right.\nonumber
\end{equation}
The space $\dot{\mathcal{N}}_{q, \mu, r}^{s}\left(\mathbb{R}^{n}\right)$
is a Banach space and, in particular, $\dot{\mathcal{N}}_{q, 0, r}^{s}=\dot{B}_{q, r}^{s}$ (case $\mu=0$ )
corresponds to the
homogeneous Besov space. We have the real-interpolation properties
\begin{equation}\label{hweq1}
\dot{\mathcal{N}}_{q, \mu, r}^{s}=\left(\mathcal{M}_{q, \mu}^{s_{1}}, \mathcal{M}_{q, \mu}^{s_{2}}\right)_{\theta, r}
\end{equation}
and
\begin{equation}\label{eq208}
\dot{\mathcal{N}}_{q, \mu, r}^{s}=\left(\dot{\mathcal{N}}_{q, \mu, r_{1}}^{s_{1}}, \dot{\mathcal{N}}_{q, \mu, r_{2}}^{s_{2}}\right)_{\theta, r}
\end{equation}
for all $s_{1} \neq s_{2}, 0<\theta<1$ and $s=(1-\theta) s_{1}+\theta s_{2}.$
Here $(X, Y)_{\theta, r}$ stands for the real interpolation space
between $X$ and $Y$  constructed via the $K_{\theta, q}-$method.
Recall that $(\cdot, \cdot)_{\theta, r}$ is an exact interpolation functor of
exponent $\theta$ on the category of normed spaces.

In the next lemmas, we collect basic facts about Morrey spaces and Besov-Morrey spaces
(see \cite{ap, hw12, t}).
\begin{lem}\label{lem1}
Suppose that $s_{1}, s_{2} \in \mathbb{R}, 1 \leq p_{1}, p_{2}, p_{3}<\infty$ and $0 \leq \mu_{i}<n, i=1,2,3$.\\
(i)(Inclusion)\ \ If $\frac{n-\mu_{1}}{p_{1}}=\frac{n-\mu_{2}}{p_{2}}$ and $p_{2} \leq p_{1}$,
then
\begin{equation*}\label{eq209}
\mathcal{M}_{p_{1}, \mu_{1}} \hookrightarrow \mathcal{M}_{p_{2}, \mu_{2}} \quad \text { and } \dot{\mathcal{N}}_{p_{1}, \mu_{1}, 1}^{0} \hookrightarrow \mathcal{M}_{p_{1}, \mu_{1}} \hookrightarrow \dot{\mathcal{N}}_{p_{1}, \mu_{1}, \infty}^{0}.
\end{equation*}
(ii)(Sobolev-type embedding)\ \ Let
$j=1,2$ and $p_{j}, s_{j}$ be $p_{2} \leq p_{1}, s_{1} \leq s_{2}$ such that $s_{2}-\frac{n-\mu_{2}}{p_{2}}=s_{1}-\frac{n-\mu_{1}}{p_{1}}$,
then we have
\begin{equation*}\label{eq210}
\mathcal{M}_{p_{2}, \mu}^{s_{2}} \hookrightarrow \mathcal{M}_{p_{1}, \mu}^{s_{1}},\left(\mu=\mu_{1}=\mu_{2}\right)
\end{equation*}
and for every $1 \leq r_{2} \leq r_{1} \leq \infty,$ we have
\begin{equation*}\label{eq211}
\dot{\mathcal{N}}_{p_{2}, \mu_{2}, r_{2}}^{s_{2}} \hookrightarrow \dot{\mathcal{N}}_{p_{1}, \mu_{1}, r_{1}}^{s_{1}} \quad \text { and } \dot{\mathcal{N}}_{p_{2}, \mu_{2}, r_{2}}^{s_{2}} \hookrightarrow \dot{B}_{\infty, r_{2}}^{s_{2}-\frac{n-\mu_{2}}{p_{2}}}.
\end{equation*}
(iii)(H\"{o}lder inequality)\ \
Let $\frac{1}{p_{3}}=\frac{1}{p_{2}}+\frac{1}{p_{1}}$ and $\frac{\mu_{3}}{p_{3}}=\frac{\mu_{2}}{p_{2}}+\frac{\mu_{1}}{p_{1}}$.
If $f_{j} \in \mathcal{M}_{p_{j}, \mu_{j}}$ with $j=1,2,$ then $f_{1} f_{2} \in \mathcal{M}_{p_{3}, \mu_{3}}$ and
\begin{equation*}\label{eq212}
\left\|f_{1} f_{2}\right\|_{p_{3}, \mu_{3}} \leq\left\|f_{1}\right\|_{p_{1}, \mu_{1}}\left\|f_{2}\right\|_{p_{2}, \mu_{2}}.
\end{equation*}
\end{lem}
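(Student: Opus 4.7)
The plan is to verify each of the three items by reducing to classical estimates on Euclidean balls and then using Young's and Bernstein's inequalities on dyadic blocks; since these are standard properties of Morrey and Besov-Morrey spaces, my aim is mainly to record the structure of each argument and indicate where the hypotheses enter. The proofs are essentially those in Kozono-Yamazaki and Taylor; I would mention this and follow the outlines below.

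For part (i), for the Morrey inclusion I would fix $x_0\in\mathbb{R}^n$ and $r>0$ and apply the classical Hölder inequality on the ball $Q_r(x_0)$ with exponents $p_2\le p_1$ to obtain
\begin{equation*}
\|f\|_{L^{p_2}(Q_r(x_0))}\le |Q_r(x_0)|^{1/p_2-1/p_1}\|f\|_{L^{p_1}(Q_r(x_0))}\le C\,r^{n/p_2-n/p_1+\mu_1/p_1}\|f\|_{p_1,\mu_1},
\end{equation*}
and then check that the exponent $n/p_2-n/p_1+\mu_1/p_1$ equals $\mu_2/p_2$ exactly when $(n-\mu_1)/p_1=(n-\mu_2)/p_2$. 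For the Besov-Morrey sandwich $\dot{\mathcal{N}}^0_{p_1,\mu_1,1}\hookrightarrow\mathcal{M}_{p_1,\mu_1}\hookrightarrow\dot{\mathcal{N}}^0_{p_1,\mu_1,\infty}$, the left inclusion comes from the Littlewood-Paley decomposition $f=\sum_j \phi_j*f$ (modulo polynomials) together with the triangle inequality in $\mathcal{M}_{p_1,\mu_1}$, while the right one follows from Young's inequality in Morrey spaces, $\|\phi_j*f\|_{p_1,\mu_1}\le\|\phi\|_{L^1}\|f\|_{p_1,\mu_1}$, uniformly in $j$.

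For part (ii), the Morrey Sobolev embedding $\mathcal{M}^{s_2}_{p_2,\mu}\hookrightarrow\mathcal{M}^{s_1}_{p_1,\mu}$ amounts to the boundedness of the Riesz potential $(-\Delta)^{-(s_2-s_1)/2}:\mathcal{M}_{p_2,\mu}\to\mathcal{M}_{p_1,\mu}$ under the scaling relation $s_2-(n-\mu)/p_2=s_1-(n-\mu)/p_1$; I would invoke the Hardy-Littlewood-Sobolev theorem in Morrey spaces (proved in Taylor \cite{t}) rather than redoing it. The Besov-Morrey embedding then follows block by block: applying the Morrey inclusion of part (i) plus the Morrey Sobolev embedding to each $\phi_j*u$ gives $2^{js_1}\|\phi_j*u\|_{p_1,\mu_1}\le C\,2^{js_2}\|\phi_j*u\|_{p_2,\mu_2}$, and the monotonicity of $\ell^{r}$ in $r$ handles $r_2\le r_1$. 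Finally, the embedding into $\dot{B}^{s_2-(n-\mu_2)/p_2}_{\infty,r_2}$ reduces to the Bernstein-type estimate $\|\phi_j*u\|_{L^\infty}\le C\,2^{j(n-\mu_2)/p_2}\|\phi_j*u\|_{p_2,\mu_2}$, valid because $\phi_j*u$ has Fourier support in $\{|\xi|\sim 2^j\}$ so one can shift an $L^1$ convolution and use the Morrey norm control; the main thing to check is that the constant in this Bernstein inequality is independent of $j$, which follows by scaling after noting that $\mathcal{M}_{p_2,\mu_2}$ satisfies $\|g(\lambda\cdot)\|_{p_2,\mu_2}=\lambda^{-(n-\mu_2)/p_2}\|g\|_{p_2,\mu_2}$.

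For part (iii), I would just apply pointwise the classical Hölder inequality on $Q_r(x_0)$ with exponents $1/p_3=1/p_1+1/p_2$, giving
\begin{equation*}
\|f_1f_2\|_{L^{p_3}(Q_r(x_0))}\le\|f_1\|_{L^{p_1}(Q_r(x_0))}\|f_2\|_{L^{p_2}(Q_r(x_0))}\le r^{\mu_1/p_1+\mu_2/p_2}\|f_1\|_{p_1,\mu_1}\|f_2\|_{p_2,\mu_2},
\end{equation*}
and conclude by the hypothesis $\mu_3/p_3=\mu_1/p_1+\mu_2/p_2$ after multiplying by $r^{-\mu_3/p_3}$ and taking supremum. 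The only genuine obstacle in the whole lemma is the Morrey Hardy-Littlewood-Sobolev estimate underlying the Sobolev-type embedding in (ii), for which I would cite \cite{t} rather than reprove; the remaining pieces are elementary manipulations of the definition.
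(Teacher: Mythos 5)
The paper offers no proof of this lemma at all --- it is presented as a collection of known facts with the bare citation ``see \cite{ap}, \cite{hw12}, \cite{t}'' --- so the only comparison available is with those references, which is exactly the route you also take. Your sketch correctly reconstructs the standard arguments (H\"older's inequality on balls $Q_r(x_0)$ for the inclusion in (i) and the product estimate in (iii), Minkowski/Young for the convolution bound $\|\phi_j*f\|_{p,\mu}\leq\|\phi\|_{L^1}\|f\|_{p,\mu}$, and blockwise Bernstein--Riesz-potential estimates for the embeddings in (ii), with the Morrey Hardy--Littlewood--Sobolev theorem cited from \cite{t}), so it is a correct and faithful fleshing-out of what the paper leaves to the literature.
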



Set $\alpha=1$ in [\cite{ap}, Lemma 3.1],
we have the following decay estimates about
the heat semi-group in the Sobolev-Morrey
or Besov-Morrey space.
\begin{lem}\label{lem3}
Let $s, \beta \in \mathbb{R}, 1<p \leq q<\infty, 0 \leq \mu<n,$ and $(\beta-s)+\frac{n-\mu}{p}-\frac{n-\mu}{q}<2$ where $\beta \geq s.$\\
(i)\ \ There exists $C > 0$ such that
\begin{equation}
\left\|e^{t\Delta} f\right\|_{\mathcal{M}_{q, \mu}^{\beta}} \leq C t^{-\frac{1}{2}(\beta-s)-\frac{1}{2}\left(\frac{n-\mu}{p}-\frac{n-\mu}{q}\right)}\|f\|_{\mathcal{M}_{p, \mu}^{s}}\nonumber
\end{equation}
for every $t>0$ and $f \in \mathcal{M}_{p, \mu}^{s}$.\\
(ii)\ \ Let $r \in[1, \infty],$ there exists $C>0$ such that
\begin{equation}
\left\|e^{t\Delta} f\right\|_{\dot{\mathcal{N}}_{q, \mu, r}^{\beta}} \leq C t^{-\frac{1}{2}(\beta-s)-\frac{1}{2}\left(\frac{n-\mu}{p}-\frac{n-\mu}{q}\right)}\|f\|_{\dot{\mathcal{N}}_{p, \mu, r}^{s}}\nonumber
\end{equation}
for every $f \in \mathcal{S}^{\prime} / \mathcal{P}$ and $t>0$.\\
(iii)\ \ Let $r \in[1, \infty]$ and $\beta>s,$ there exists $C>0$ such that
\begin{equation}
\left\|e^{t\Delta} f\right\|_{\dot{\mathcal{N}}_{q, \mu, 1}^{\beta}} \leq C t^{-\frac{1}{2}(\beta-s)-\frac{1}{2}\left(\frac{n-\mu}{p}-\frac{n-\mu}{q}\right)}\|f\|_{\dot{\mathcal{N}}_{p, \mu, r}^{s}}\nonumber
\end{equation}
for every $f \in \mathcal{S}^{\prime} / \mathcal{P}$.
\end{lem}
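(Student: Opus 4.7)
The plan is to follow the strategy of [ap, Lemma 3.1] specialized to $\alpha=1$, deriving part (i) directly from the convolution representation of the heat semigroup together with the scaling laws of Morrey spaces, and then bootstrapping parts (ii) and (iii) by the Littlewood-Paley decomposition.

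For (i), the first step is to write
$(-\Delta)^{\beta/2} e^{t\Delta} f = K_t * (-\Delta)^{s/2} f$,
where the kernel $K_t(x) = t^{-(\beta-s)/2}\, t^{-n/2}\, K(x/\sqrt{t})$ with $K := \mathcal{F}^{-1}(|\xi|^{\beta-s} e^{-|\xi|^2})$ lies in $L^1(\mathbb{R}^n) \cap L^\infty(\mathbb{R}^n)$ (using $\beta \geq s$ so that the singularity of the symbol at the origin is harmless against the Gaussian decay). Next I would invoke a Young-type convolution inequality on Morrey spaces,
$\|K_t * g\|_{q,\mu} \leq C \|K_t\|_{L^\rho} \|g\|_{p,\mu}$
with the Hölder-conjugate relation $1 + 1/q = 1/\rho + 1/p$, and then collect the scaling factor $\|K_t\|_{L^\rho} = C\,t^{-(\beta-s)/2 - n(1-1/\rho)/2}$. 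The Morrey index $\mu$ rescales the effective dimension from $n$ to $n-\mu$, so that $\rho$ is chosen with $n(1-1/\rho)=(n-\mu)(1/p-1/q)$, producing exactly the stated exponent.

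Part (ii) then follows from (i) through the frequency decomposition. Since $e^{t\Delta}$ commutes with $\phi_j *$ and the Sobolev-Morrey estimate of (i) applies uniformly in $j$, taking the $\ell^r$ norm with weight $2^{j\beta}$ on both sides transfers the decay rate verbatim. Part (iii) requires the strict inequality $\beta>s$ to upgrade the output summability from $\ell^r$ to $\ell^1$; here I exploit the Bernstein-type gain $e^{-c t 2^{2j}}$ coming from the Fourier localization of $\phi_j * e^{t\Delta}$, writing
$2^{j\beta}\|\phi_j * e^{t\Delta} f\|_{q,\mu} \leq C\,2^{j(\beta-s)} e^{-c t 2^{2j}} t^{-\frac{1}{2}(\frac{n-\mu}{p}-\frac{n-\mu}{q})} 2^{js}\|\phi_j * f\|_{p,\mu}$,
and summing the geometric-type factor in $j$ by splitting into the regimes $\{2^{2j}t \lesssim 1\}$ and $\{2^{2j}t \gtrsim 1\}$. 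Applying $\ell^r \hookrightarrow \ell^\infty$ in the first regime and the exponential decay in the second yields the $\ell^1$ bound.

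The main technical obstacle is justifying the Young-type convolution inequality on Morrey spaces used in step (i), since this is not immediate from the $L^p$ version: one typically invokes an Adams-type pointwise bound controlling convolutions in $\mathcal{M}_{p,\mu}$ by (fractional) maximal functions. Once that is in place, the remaining work is essentially dimensional bookkeeping of scaling exponents and standard Littlewood-Paley arguments.
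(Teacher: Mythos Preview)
The paper does not supply its own proof of this lemma; it simply invokes \cite{ap}, Lemma~3.1, with $\alpha=1$, so your plan to follow that reference is exactly the paper's approach, and your outlines of (ii) and (iii) via Littlewood--Paley and the Bernstein gain $e^{-ct2^{2j}}$ are the standard ones.

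There is, however, an internal inconsistency in your sketch of (i). The Young-type inequality you write, $\|K_t*g\|_{q,\mu}\le C\|K_t\|_{L^\rho}\|g\|_{p,\mu}$ with the Lebesgue conjugacy $1+1/q=1/\rho+1/p$, would produce the decay exponent $-\tfrac{n}{2}(1/p-1/q)$, not the Morrey exponent $-\tfrac{n-\mu}{2}(1/p-1/q)$; the second relation $n(1-1/\rho)=(n-\mu)(1/p-1/q)$ that you then impose on $\rho$ is incompatible with the first when $\mu>0$. You correctly flag this as the main obstacle, and an Adams-type fractional-maximal argument is one valid repair. The more elementary route, and the one closest to the original Kato and Taylor treatments for Morrey spaces, is to establish the $L^\infty$ endpoint
\[
\|e^{t\Delta}g\|_{L^\infty}\le C\,t^{-\frac{n-\mu}{2p}}\|g\|_{p,\mu}
\]
directly, by decomposing the Gaussian convolution over dyadic shells and reading off the Morrey norm on each, and to combine it with the $L^1$-Young bound $\|e^{t\Delta}g\|_{p,\mu}\le C\|g\|_{p,\mu}$. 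The H\"older-type interpolation $\|h\|_{q,\mu}\le\|h\|_{L^\infty}^{1-p/q}\|h\|_{p,\mu}^{p/q}$ then yields the correct power of $t$ in $\mathcal{M}_{q,\mu}$, and conjugating by $(-\Delta)^{(\beta-s)/2}$ handles the regularity shift.
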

\begin{rem}\label{remhw1}
Lemma \ref{lem3}
 also holds when $e^{t\Delta}$ is replaced by $e^{t A}$.
\end{rem}

\subsection{Proof of Theorem \ref{thm1}}

Before prove Theorem \ref{thm1}, we need  to prove the following proposition.
\begin{prop}\label{propmax201}
Let $1<p\leq q<\infty, 1<\alpha<\infty, 0\leq \mu< n$ and $s\in\mathbb{R}.$ Assume that
\begin{equation*}
\frac{n-\mu}{p}<\frac{2}{\alpha}+\frac{n-\mu}{q}.
\end{equation*}
For $a \in \dot{\mathcal{N}}_{p, \mu, r}^{k}$ with
 $k=s+2+[(n-\mu)/p-(n-\mu)/q]-2/\alpha,$ it holds that
\begin{equation*}
A e^{-t A} a \in L^{\alpha, r}(0, \infty ; \dot{\mathcal{N}}_{q,\mu, 1}^{s})
\end{equation*}
with the same estimate
\begin{equation*}
\left\| \| A e^{-t A} a\|_{\dot{\mathcal{N}}_{q, \mu, 1}^{s}}\right\|_{L^{\alpha, r}(0, \infty)} \leq C\|a\|_{\dot{\mathcal{N}}_{p, \mu,  r}^{k}}
\end{equation*}
where $C=C(n,\mu, q, \alpha, s, r)$.
\end{prop}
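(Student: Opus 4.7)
The plan is to prove Proposition 2.1 by combining a pointwise (weak-type) semigroup decay estimate with real interpolation. As a first step, using Remark 2.1 to transfer Lemma 2.3(iii) to the Stokes semigroup, and noting that on divergence-free fields $A$ raises the Besov-Morrey regularity index by $2$, one has $\|A e^{-tA}a\|_{\dot{\mathcal{N}}_{q,\mu,1}^s} \leq C\|e^{-tA}a\|_{\dot{\mathcal{N}}_{q,\mu,1}^{s+2}}$. Applying Lemma 2.3(iii) with source regularity $k$ and target regularity $\beta = s+2$, the structural hypothesis $\beta>$(source) reads $s+2>k$, which is exactly the assumption $(n-\mu)/p<2/\alpha+(n-\mu)/q$; the other hypothesis $(\beta-k)+(n-\mu)/p-(n-\mu)/q<2$ reduces to $\alpha>1$. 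A direct computation of the exponent then yields
\begin{equation*}
\|A e^{-tA}a\|_{\dot{\mathcal{N}}_{q,\mu,1}^s} \leq C\, t^{-1/\alpha}\, \|a\|_{\dot{\mathcal{N}}_{p,\mu,\infty}^k}, \qquad t>0.
\end{equation*}

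This bound, with its $t^{-1/\alpha}$ decay, only places the operator $\mathcal{T}: a \mapsto A e^{-\cdot A}a$ in the \emph{weak} Lorentz target $L^{\alpha,\infty}(0,\infty;\dot{\mathcal{N}}_{q,\mu,1}^s)$ starting from the endpoint space $\dot{\mathcal{N}}_{p,\mu,\infty}^k$. To obtain the claimed $L^{\alpha,r}$ estimate from data in $\dot{\mathcal{N}}_{p,\mu,r}^k$, I would apply real interpolation between two endpoints. Pick regularities $k_0<k<k_1$ close to $k$ and define $\alpha_j\in(1,\infty)$ by $k_j = s+2+(n-\mu)/p-(n-\mu)/q-2/\alpha_j$. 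For $k_j$ sufficiently close to $k$, both $\alpha_j$ still satisfy the strict inequality $(n-\mu)/p<2/\alpha_j+(n-\mu)/q$, so the first step gives
\begin{equation*}
\mathcal{T}: \dot{\mathcal{N}}_{p,\mu,\infty}^{k_j} \longrightarrow L^{\alpha_j,\infty}(0,\infty;\dot{\mathcal{N}}_{q,\mu,1}^s), \qquad j=0,1,
\end{equation*}
continuously.

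Next, choose $\theta\in(0,1)$ with $k=(1-\theta)k_0+\theta k_1$, which by construction forces $1/\alpha=(1-\theta)/\alpha_0+\theta/\alpha_1$. The Besov-Morrey interpolation identity (2.8) yields
\begin{equation*}
\bigl(\dot{\mathcal{N}}_{p,\mu,\infty}^{k_0},\dot{\mathcal{N}}_{p,\mu,\infty}^{k_1}\bigr)_{\theta,r} = \dot{\mathcal{N}}_{p,\mu,r}^{k},
\end{equation*}
while the standard vector-valued real interpolation of Lorentz spaces gives
\begin{equation*}
\bigl(L^{\alpha_0,\infty}(X),\, L^{\alpha_1,\infty}(X)\bigr)_{\theta,r}=L^{\alpha,r}(X),\qquad X=\dot{\mathcal{N}}_{q,\mu,1}^s.
\end{equation*}
Applying the exact interpolation functor $(\cdot,\cdot)_{\theta,r}$ to the two endpoint bounds then produces the bounded map $\mathcal{T}:\dot{\mathcal{N}}_{p,\mu,r}^{k}\to L^{\alpha,r}(0,\infty;\dot{\mathcal{N}}_{q,\mu,1}^s)$ with the quantitative estimate of the proposition.

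The main technical point will be choosing the endpoints $k_0,k_1$ (equivalently $\alpha_0,\alpha_1$) carefully: they must lie in $(1,\infty)$ and preserve the strict inequality $(n-\mu)/p<2/\alpha_j+(n-\mu)/q$ demanded by Lemma 2.3. Since this inequality holds strictly for $\alpha$ itself, it persists in an open neighborhood, so two admissible $\alpha_j$ straddling $\alpha$ always exist; when $p=q$ the constraint is vacuous, and when $p<q$ one simply picks $\alpha_j$ in the open interval $\bigl(1,\,2/((n-\mu)/p-(n-\mu)/q)\bigr)$ containing $\alpha$. Apart from this bookkeeping, the argument is a clean combination of Lemma 2.3(iii), Remark 2.1, the interpolation identity (2.8), and the standard Lorentz interpolation theorem.
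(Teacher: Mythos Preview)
Your proposal is correct and follows essentially the same route as the paper: obtain weak-type $L^{\alpha_j,\infty}$ bounds from Lemma~2.3(iii) (via Remark~2.1) at two regularity levels $k_0<k<k_1<s+2$, then real-interpolate using the Besov--Morrey identity and the Lorentz-space interpolation to land in $L^{\alpha,r}$. The only cosmetic difference is that the paper interpolates the sub-additive scalar map $a\mapsto \|Ae^{-tA}a\|_{\dot{\mathcal N}^s_{q,\mu,1}}$ between scalar Lorentz spaces, whereas you interpolate the linear operator $\mathcal T$ between vector-valued Lorentz spaces; both are valid and yield the same estimate.
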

\begin{proof}
Since $\frac{n-\mu}{p}<\frac{2}{\alpha}+\frac{n-\mu}{q},$ we have that $k<s+2$.
Hence taking $\theta \in(0,1)$ and
$k_{0}<k<k_{1}<s+2$ so that $k=(1-\theta) k_{0}+\theta k_{1}.$
Using Lemma \ref{lem3}(iii), we have
\begin{equation*}
  \left\|A e^{-t A} a\right\|_{\dot{\mathcal{N}}_{q,\mu, 1}^{s}}
  =\left\|e^{-t A} a\right\|_{\dot{\mathcal{N}}_{q,\mu, 1}^{s+2}}
   \leq C t^{-\frac{1}{2}\left(\frac{n-\mu}{p}-\frac{n-\mu}{q}\right)-\frac{1}{2}\left(s+2-k_{i}\right)}
   \|a\|_{\dot{\mathcal{N}}_{p,\mu, \infty}^{k_{i}}}
\end{equation*}
for $i=0,1,$ and hence we see that the mapping
\begin{equation*}
  a \in \dot{\mathcal{N}}_{p,\mu, \infty}^{k_{i}} \mapsto
  \left\|A e^{-t A} a\right\|_{\dot{\mathcal{N}}_{q,\mu, 1}^{s}} \in L^{\alpha_{i}, \infty}(0, \infty)
\end{equation*}
is a bounded sub-additive operator for
\begin{equation*}
\frac{1}{\alpha_{i}}=\frac{1}{2}\left(\frac{n-\mu}{p}-\frac{n-\mu}{q}\right)
+\frac{1}{2}\left(s+2-k_{i}\right), \quad i=0,1.
\end{equation*}
Then it follows from the real interpolation theorem that
\begin{equation*}
a \in\left(\dot{\mathcal{N}}_{p,\mu, \infty}^{k_{0}}, \dot{\mathcal{N}}_{p,\mu, \infty }^{k_{1}}\right)_{\theta, r} \rightarrow\left\|A e^{-t A} a\right\|_{\dot{\mathcal{N}}_{q,\mu, 1}^{s}} \in\left(L^{\alpha_{0}, \infty}(0, \infty), L^{\alpha_{1}, \infty}(0, \infty)\right)_{\theta, r}.
\end{equation*}
Since $\left(\dot{\mathcal{N}}_{p,\mu, \infty}^{k_{0}}, \dot{\mathcal{N}}_{p, \mu, \infty}^{k_{1}}\right)_{\theta, r}=\dot{\mathcal{N}}_{p,\mu, r}^{k}$
(by (\ref{hweq1}))
and since
$\left(L^{\alpha_{0}, \infty}(0, \infty), L^{\alpha_{1}, \infty}(0, \infty)\right)_{\theta, r}=L^{\alpha, r}(0, \infty),$ implied by
\begin{eqnarray*}
\frac{1}{\alpha}
&=&\frac{1-\theta}{\alpha_{0}}+\frac{\theta}{\alpha_{1}}\nonumber\\
&=&(1-\theta)\left(\frac{1}{2}\left(\frac{n-\mu}{p}-\frac{n-\mu}{q}\right)
+\frac{1}{2}\left(s+2-k_{0}\right)\right)\nonumber\\
\quad \ \ \ \ \ &+&\theta\left(\frac{1}{2}\left(\frac{n-\mu}{p}-\frac{n-\mu}{q}\right)
+\frac{1}{2}\left(s+2-k_{1}\right)\right)\nonumber\\
&=&\frac{1}{2}\left(\frac{n-\mu}{p}-\frac{n-\mu}{q}\right)+\frac{1}{2}\left(s+2-(1-\theta) k_{0}-\theta k_{1}\right)\nonumber\\
&=&\frac{1}{2}\left(\frac{n-\mu}{p}-\frac{n-\mu}{q}\right)+\frac{1}{2}(s+2-k),
\end{eqnarray*}
we conclude that the mapping
\begin{equation*}
a \in \dot{\mathcal{N}}_{p,\mu, r}^{k} \rightarrow\left\|A e^{-t A} a\right\|_{\dot{\mathcal{N}}_{q,\mu, 1}^{s}} \in L^{\alpha, r}(0, \infty)
\end{equation*}
is a bounded sub-additive operator, which yields the desired result.
 This proves Proposition
\ref{propmax201}.
\end{proof}


\textit{\textbf{Proof of Theorem \ref{thm1}.}}

\textit{Step 1.}
Let us first prove in case $a=0.$
By the  maximal regularity
theorem in the homogeneous Sobolev-Morrey space(for the details, we can
refer to [\cite{a1}, Chapter 4] and [\cite{adams}, Chapter 8])
 $\mathcal{M}_{q, \mu}^{s}$ for $s_{0}<s<s_{1} \leq k+2$,
for every $f \in L^{\alpha}(0, T ; \mathcal{M}_{q, \mu}^{s})$
$(i=0,1)$ with $0<T \leq \infty$
there exists a unique solution $u$ of equation $(\ref{eqmaxs})$ in the class
\begin{equation*}
u_{t}, A u \in L^{\alpha}(0, T ; \mathcal{M}_{q, \mu}^{s_{i}})
\end{equation*}
with the estimate
\begin{equation*}
\left\|u_{t}\right\|_{L^{\alpha}(0, T ; \mathcal{M}_{q, \mu}^{s_{i}})}+\|A u\|_{L^{\alpha}(0, T ; \mathcal{M}_{q, \mu}^{s_{i}})}
\leq C\|f\|_{L^{\alpha}(0, T ; \mathcal{M}_{q, \mu}^{s_{i}})},
\ \ i= 0, 1,
\end{equation*}
where $C=C(n,\mu, q, p, \alpha, s_{0}, s_{1})$ is independent of $T$.
This implies that the mapping
\begin{equation}
S : f \in L^{\alpha}(0, T ; \mathcal{M}_{q, \mu}^{s_{i}}) \rightarrow\left(u_{t}, A u\right) \in L^{\alpha}(0, T ; \mathcal{M}_{q, \mu}^{s_{i}})^{2},\ \ i=0, 1\nonumber
\end{equation}
is a bounded linear operator with its operator norm independent of $T$.
Hence by the real interpolation, $S$ extends a bounded operator from
$L^{\alpha}(0, T ;(\mathcal{M}_{q, \mu}^{s_{0}}, \mathcal{M}_{q, \mu}^{s_{1}})_{\theta, \beta})$
to $L^{\alpha}(0, T ;(\mathcal{M}_{q, \mu}^{s_{0}}, \mathcal{M}_{q, \mu}^{s_{1}})_{\theta, \beta})^{2}$ for all $1 \leq \beta \leq \infty.$
Since $(\mathcal{M}_{q, \mu}^{s_{0}}, \mathcal{M}_{q, \mu}^{s_{1}})_{\theta, \beta}=\dot{\mathcal{N}}_{q, \mu, \beta}^{s}$
(by (\ref{eq208}))
 with $s=(1-\theta) s_{0}+\theta s_{1},$ we see that
\begin{equation*}
S : f \in L^{\alpha}(0, T ; \dot{\mathcal{N}}_{q, \mu, \beta}^{s}) \rightarrow\left(u_{t}, A u\right) \in L^{\alpha}(0, T ; \dot{\mathcal{N}}_{q, \mu, \beta}^{s})^{2}
\end{equation*}
is a bounded operator with its operator norm independent of $T$.
Taking $\alpha_{0}<\alpha<\alpha_{1}$ and
$0<\theta<1$ so that $1 / \alpha=(1-\theta) / \alpha_{0}+\theta / \alpha_{1},$ we see that
\begin{equation*}
\begin{aligned} S : f & \in\left(L^{\alpha_{0}}(0, T ; \dot{\mathcal{N}}_{q, \mu, \beta}^{s}), L^{\alpha_{1}}(0, T ; \dot{\mathcal{N}}_{q, \mu, \beta}^{s})\right)_{\theta, r} \\ & \rightarrow\left(u_{t}, A u\right) \in\left(L^{\alpha_{0}}(0, T ; \dot{\mathcal{N}}_{q, \mu, \beta} ^{s}), L^{\alpha_{1}}(0, T ; \dot{\mathcal{N}}_{q, \mu, \beta}^{s})\right)_{\theta, r}^{2} \end{aligned}
\end{equation*}
is a bounded operator with its operator norm independent of $T$. Since
\begin{equation}
\left(L^{\alpha_{0}}(0, T ; \dot{\mathcal{N}}_{q, \mu, \beta}^{s}), L^{\alpha_{1}}(0, T ; \dot{\mathcal{N}}_{q, \mu, \beta}^{s})\right)_{\theta, r}=L^{\alpha, r}(0, T ; \dot{\mathcal{N}}_{q, \mu, \beta}^{s})\nonumber
\end{equation}
we obtain the desired result with the estimate (\ref{eqmax103}) for $a = 0$.

Step $2 .$ For $a \in \dot{\mathcal{N}}_{p, \mu, r}^{k}$
and $f \in L^{\alpha, r}(0, T ; \dot{\mathcal{N}}_{q,\mu, \beta}^{s}),$ we see that
\begin{equation*}
u(t)=e^{-t A} a+S f(t), \quad 0<t<T
\end{equation*}
solves  equation $(\ref{eqmaxs})$.
Since $\dot{\mathcal{N}}_{q,\mu, 1}^{s} \subset \dot{\mathcal{N}}_{q,\mu, \beta}^{s}$,
the desired result with the estimate (\ref{eqmax103}) is a consequence of
Proposition \ref{propmax201} and the argument of the above Step 1.
This completes the proof of Theorem \ref{thm1}.

\section{Some estimates
  for the nonlinear term }
In this section,
we establish several bilinear
estimates associated with the nonlinear term $u\cdot\nabla u$
in terms with the norms of $u_{t}$ and $Au$ in $L^{\alpha, r}(0, T ; \dot{\mathcal{N}}_{q,\mu, \infty}^{s})$.

We first recall the following variant of the Hardy-Littlewood-Sobolev inequality.
(see \cite{rs}, Chapter IX.4 and \cite{ks6}).
\begin{prop}\label{prop31}
For $0<\sigma<1,$ we define $I_{\sigma} g$ by
\begin{equation}
I_{\sigma} g(t)=\int_{0}^{\infty}|t-\tau|^{\sigma-1} g(\tau) d \tau, \quad 0<t<\infty.\nonumber
\end{equation}
For $g \in L^{\alpha, q}(0, \infty)$ with $1<\alpha<1 / \sigma, 1 \leq q \leq \infty$,
it holds that $I_{\sigma} g \in L^{\alpha_{*}, q}(0, \infty)$ for
$\alpha<\alpha_{*}<\infty$ satisfying $1 / \alpha_{*}=1 / \alpha-\sigma$ with the estimate
\begin{equation}
\left\|I_{\sigma} g\right\|_{L^{\alpha_{*}, q}(0, \infty)} \leq C\|g\|_{L^{\alpha, q}(0, \infty)}\nonumber
\end{equation}
where $C=C(\alpha, \sigma)$.
\end{prop}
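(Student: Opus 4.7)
The plan is to recognise $I_\sigma$ as a convolution and invoke the Young/O'Neil inequality for Lorentz spaces. Extending $g$ by zero to all of $\mathbb{R}$, we may write $I_\sigma g(t) = (k \ast g)(t)$ with kernel $k(t) = |t|^{\sigma-1}$. The first step is to verify that $k$ belongs to the weak-Lebesgue (Lorentz) space $L^{1/(1-\sigma),\infty}(\mathbb{R})$. This is an immediate distribution-function calculation: $|\{t \in \mathbb{R} : |t|^{\sigma-1} > \lambda\}| = 2\lambda^{-1/(1-\sigma)}$, which gives the quasi-norm bound $\|k\|_{L^{1/(1-\sigma),\infty}} < \infty$.

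The second step is to apply O'Neil's convolution inequality in Lorentz spaces, which is the natural generalisation of Young's inequality: if $f \in L^{p_1,q_1}(\mathbb{R})$ and $h \in L^{p_2,q_2}(\mathbb{R})$ with $\frac{1}{p} = \frac{1}{p_1} + \frac{1}{p_2} - 1 \in (0,1)$, then
\begin{equation*}
\|f \ast h\|_{L^{p,q}(\mathbb{R})} \leq C\,\|f\|_{L^{p_1,q_1}(\mathbb{R})}\,\|h\|_{L^{p_2,q_2}(\mathbb{R})}
\end{equation*}
whenever $\frac{1}{q} \leq \frac{1}{q_1} + \frac{1}{q_2}$. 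Specialising to $(f, p_1, q_1) = \bigl(k, \tfrac{1}{1-\sigma}, \infty\bigr)$ and $(h, p_2, q_2) = (g, \alpha, q)$, the assumption $1 < \alpha < 1/\sigma$ precisely ensures
\begin{equation*}
\frac{1}{p} = (1-\sigma) + \frac{1}{\alpha} - 1 = \frac{1}{\alpha} - \sigma = \frac{1}{\alpha_*} \in (0,1),
\end{equation*}
while $q_1 = \infty$ gives $\frac{1}{q_1} + \frac{1}{q_2} = \frac{1}{q}$, so that $I_\sigma g \in L^{\alpha_*,q}$ with the required estimate. The condition $\alpha < \alpha_*$ is automatic since $\sigma > 0$.

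There is essentially no genuine obstacle, since this is a direct consequence of the classical Lorentz-space convolution theorem in Reed--Simon \cite{rs} (Chapter IX.4) and is used in the same form in Kozono--Shimizu \cite{ks6}. The only minor care is to handle the case $q = \infty$, where the Lorentz quasi-norm is just the weak-$L^{\alpha_*}$ norm and the inequality still follows from the standard Marcinkiewicz-type formulation of O'Neil's theorem. Thus the proof amounts to identifying the kernel, observing its weak-$L^{1/(1-\sigma)}$ membership, and citing O'Neil's inequality.
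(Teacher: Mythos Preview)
Your proposal is correct. The paper does not actually prove this proposition; it merely recalls it with citations to Reed--Simon \cite{rs}, Chapter IX.4, and Kozono--Shimizu \cite{ks6}, so your sketch via O'Neil's convolution inequality is precisely the standard argument that underlies those references.
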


Next, we present   the Leibniz rule of the fractional derivatives in
the homogeneous Besov-Morrey spaces.

\begin{prop}\label{prop32}

 Let $1 \leq p \leq \infty, 1 \leq q \leq \infty,$ and let $s>0, \sigma_{1}>0,  \sigma_{2}>0.$ We take $1 \leq p_{1}, p_{2} \leq \infty$
and $1 \leq r_{1}, r_{2} \leq \infty $ so that $1 / p=1 / p_{1}+1 / p_{2}=1 / r_{1}+1 / r_{2} $
 and $\mu/ p=\mu_{1} / p_{1}+\mu_{2} / p_{2}=\nu_{1} / r_{1}+\nu_{2}/ r_{2}.$ For every $f \in \dot{\mathcal{N}}_{p_{1},\mu_{1}, q}^{s+\sigma_{1}} \cap \dot{\mathcal{N}}_{r_{1},\nu_{1}, \infty}^{-\sigma_{2}}$ and
$g \in \dot{\mathcal{N}}_{p_{2}, \mu_{2}, \infty}^{-\sigma_{1}} \cap \dot{\mathcal{N}}_{r_{2},\nu_{2}, q}^{s+\sigma_{2}},$ it holds that $f \cdot g \in \dot{\mathcal{N}}_{p,\mu, q}^{s}$ with the estimate
\begin{equation}
\|f \cdot g\|_{\dot{\mathcal{N}}_{p,\mu, q}^{s}}\leq C\left(\|f\|_{\dot{\mathcal{N}}_{p_{1}, \mu_{1}, q}^{s+\sigma_{1}}}\|g\|_{\dot{\mathcal{N}}_{p_{2},\mu_{2}, \infty}^{-\sigma_{1}}}
+\|f\|_{\dot{\mathcal{N}}_{r_{1},\nu_{1}, \infty}^{-\sigma_{2}}}\|g\|_{\dot{\mathcal{N}}_{r_{2},\nu_{2}, q}^{s+\sigma_{2}}}\right).\nonumber
\end{equation}
where $C=C\left(n, p, p_{1}, p_{2}, r_{1}, r_{2}, s, \sigma_{1}, \sigma_{2}, \mu_{1}, \mu_{2}, \nu_{1}, \nu_{2}\right).$
\end{prop}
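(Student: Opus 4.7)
The plan is to use Bony's paraproduct decomposition
$f\cdot g=\pi(f,g)+\pi(g,f)+\rho(f,g),$
where $\pi(f,g)=\sum_{j}(S_{j-3}f)(\phi_{j}\ast g)$ with $S_{j-3}f=\sum_{i\le j-4}\phi_{i}\ast f$, and $\rho(f,g)=\sum_{|i-j|\le 2}(\phi_{i}\ast f)(\phi_{j}\ast g)$. The three pieces produce the two different-looking contributions on the right-hand side because $\pi(f,g)$ naturally pairs low frequencies of $f$ with high frequencies of $g$, $\pi(g,f)$ does the reverse, and $\rho(f,g)$ is symmetric and admits either bound. All of the analysis is driven by (i) frequency localization of each dyadic piece and (ii) the Morrey H\"older inequality in Lemma \ref{lem1}(iii). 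The hypotheses $1/p=1/p_{1}+1/p_{2}=1/r_{1}+1/r_{2}$ and $\mu/p=\mu_{1}/p_{1}+\mu_{2}/p_{2}=\nu_{1}/r_{1}+\nu_{2}/r_{2}$ are precisely what is needed to make these H\"older pairings legal.

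For $\pi(f,g)$ the spectral support of $(S_{j-3}f)(\phi_{j}\ast g)$ is contained in an annulus $\{|\xi|\sim 2^{j}\}$, so only indices $k$ with $|k-j|\le N$ contribute to $\phi_{k}\ast\pi(f,g)$. Applying Lemma \ref{lem1}(iii) at the indices $(r_{1},\nu_{1}),(r_{2},\nu_{2})$ yields $\|\phi_{k}\ast\pi(f,g)\|_{p,\mu}\lesssim\sum_{|j-k|\le N}\|S_{j-3}f\|_{r_{1},\nu_{1}}\|\phi_{j}\ast g\|_{r_{2},\nu_{2}}$. Since $\sigma_{2}>0$, the geometric sum $\|S_{j-3}f\|_{r_{1},\nu_{1}}\le\sum_{i\le j-4}\|\phi_{i}\ast f\|_{r_{1},\nu_{1}}\lesssim 2^{j\sigma_{2}}\|f\|_{\dot{\mathcal{N}}_{r_{1},\nu_{1},\infty}^{-\sigma_{2}}}$ converges. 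Multiplying by $2^{ks}$, absorbing the weight into $2^{j(s+\sigma_{2})}\|\phi_{j}\ast g\|_{r_{2},\nu_{2}}$, and taking the $\ell^{q}$-norm in $k$ produce $\|\pi(f,g)\|_{\dot{\mathcal{N}}_{p,\mu,q}^{s}}\lesssim\|f\|_{\dot{\mathcal{N}}_{r_{1},\nu_{1},\infty}^{-\sigma_{2}}}\|g\|_{\dot{\mathcal{N}}_{r_{2},\nu_{2},q}^{s+\sigma_{2}}}$. Swapping the roles of $f$ and $g$ and using $\sigma_{1}>0$ yields the symmetric estimate $\|\pi(g,f)\|_{\dot{\mathcal{N}}_{p,\mu,q}^{s}}\lesssim\|f\|_{\dot{\mathcal{N}}_{p_{1},\mu_{1},q}^{s+\sigma_{1}}}\|g\|_{\dot{\mathcal{N}}_{p_{2},\mu_{2},\infty}^{-\sigma_{1}}}$.

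For the remainder $\rho(f,g)$ the spectral support of each product $(\phi_{i}\ast f)(\phi_{j}\ast g)$ with $|i-j|\le 2$ sits in a ball of size $2^{j}$ rather than an annulus, so $\phi_{k}\ast\rho(f,g)$ collects contributions only from $j\gtrsim k-N$. Applying Lemma \ref{lem1}(iii) at the indices $(p_{1},\mu_{1}),(p_{2},\mu_{2})$ gives $\|\phi_{k}\ast\rho(f,g)\|_{p,\mu}\lesssim\sum_{j\ge k-N}\|\phi_{j}\ast f\|_{p_{1},\mu_{1}}\|\widetilde{\phi}_{j}\ast g\|_{p_{2},\mu_{2}}$ with $\widetilde{\phi}_{j}=\phi_{j-1}+\phi_{j}+\phi_{j+1}$. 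Factoring out $\|g\|_{\dot{\mathcal{N}}_{p_{2},\mu_{2},\infty}^{-\sigma_{1}}}$ leaves $\sum_{j\ge k-N}2^{(k-j)s}\bigl(2^{j(s+\sigma_{1})}\|\phi_{j}\ast f\|_{p_{1},\mu_{1}}\bigr)$. Because $s>0$, the kernel $2^{(k-j)s}\mathbf{1}_{j\ge k-N}$ lies in $\ell^{1}(\mathbb{Z})$, so a discrete Young's inequality upgrades the pointwise estimate to the $\ell^{q}$-bound $\|f\|_{\dot{\mathcal{N}}_{p_{1},\mu_{1},q}^{s+\sigma_{1}}}\|g\|_{\dot{\mathcal{N}}_{p_{2},\mu_{2},\infty}^{-\sigma_{1}}}$, which is the first term on the right-hand side.

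The main obstacle is essentially bookkeeping: verifying the Morrey H\"older compatibility at each step and checking that the standard frequency-support calculus and Bernstein-type inequalities for Littlewood--Paley blocks transfer without change to the Morrey setting (which is available from \cite{ap,hw12,t}). Once these technical ingredients are granted, the three Bony pieces are handled by parallel arguments and no input beyond $s,\sigma_{1},\sigma_{2}>0$ is used. Summing the three estimates then yields the claim.
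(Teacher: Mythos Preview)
Your proposal is correct and follows essentially the same route as the paper: both arguments use Bony's paraproduct decomposition $fg=h_1+h_2+h_3$ (your $\pi(g,f)$, $\pi(f,g)$, $\rho(f,g)$ are the paper's $h_1$, $h_2$, $h_3$), estimate each piece via frequency localization, the Morrey H\"older inequality of Lemma~\ref{lem1}(iii), and the positivity of $s,\sigma_1,\sigma_2$ to sum the resulting geometric series. The only difference is cosmetic---the paper invokes boundedness of the Hardy--Littlewood maximal operator on $\mathcal{M}_{p,\mu}$ to control $\|\Delta_j h\|_{p,\mu}$, whereas you cite this as part of the ``standard frequency-support calculus''---so the two proofs are substantively identical.
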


\begin{proof}
The proof of Proposition \ref{prop32}
is essentially adapted from [\cite{ks1}, Lemma 2.1],
for completeness, we present the proof here.

Using Bony's paraproduct formula,
we have
\begin{eqnarray}
  fg=\sum_{k=-\infty}^{+\infty}\Delta_{k}f S_{k}g
  +\sum_{k=-\infty}^{+\infty}\Delta_{k}g S_{k}f
  +\sum_{k=-\infty}^{+\infty}\sum_{|l-k|\leq2}\Delta_{k}f \Delta_{l}g
  =h_{1}+h_{2}+h_{3},\nonumber
\end{eqnarray}
where $\Delta_{k}f=\varphi_{k}*f$
and $S_{k}f=\Sigma_{l=-\infty}^{k-3}\Delta_{l}f.$

Using the support properties of $\varphi_{k}(\xi)$,
the Minkowski inequality and Lemma \ref{lem1}(iii), one obtains
\begin{eqnarray}
\|h_{1}\|_{\dot{\mathcal{N}}_{p,\mu, q}^{s}}
&=&\|\{2^{js}\|\Delta_{j}(\sum_{k=-\infty}^{+\infty}\Delta_{k}f S_{k}g)\|_{p,\mu}\}_{j}\|_{l^{q}}\nonumber\\
&=&\|\{2^{js}\|\Delta_{j}(\sum_{k=j-3}^{j+3}\Delta_{k}f S_{k}g)\|_{p,\mu}\}_{j}\|_{l^{q}}\nonumber\\
&=&\|\{2^{js}\|\Delta_{j}(\sum_{l=-3}^{3}\Delta_{j+l}f S_{j+l}g)\|_{p,\mu}\}_{j}\|_{l^{q}}\nonumber\\
&\leq&
\sum_{l=-3}^{3}\|\{2^{js}\|\Delta_{j}(\Delta_{j+l}f S_{j+l}g)\|_{p,\mu}\}_{j}\|_{l^{q}}\nonumber\\
&\leq&
\sum_{l=-3}^{3}\|\{2^{js}\|M(\Delta_{j+l}f S_{j+l}g)\|_{p,\mu}\}_{j}\|_{l^{q}}\nonumber\\
&\leq&
\sum_{l=-3}^{3}\|\{2^{js}\|\Delta_{j+l}f S_{j+l}g\|_{p,\mu}\}_{j}\|_{l^{q}}\nonumber
\end{eqnarray}
\begin{eqnarray}
&=&\sum_{l=-3}^{3}\|\{2^{(i-l)s}\|\Delta_{i}f S_{i}g\|_{p,\mu}\}_{i}\|_{l^{q}}\nonumber\\
&\leq&
C\|\{2^{is}\|\Delta_{i}f \sum_{j=-\infty}^{i-3}\Delta_{j}g\|_{p,\mu}\}_{i}\|_{l^{q}}\nonumber\\
&=&C\|\{2^{i(s+\alpha)}\|\Delta_{i}f \sum_{j=-\infty}^{i-3}2^{-\alpha j}\Delta_{j}g \|_{p,\mu}2^{\alpha(j-i)}\}_{i}\|_{l^{q}}\nonumber\\
&\leq&
C\|\{2^{i(s+\alpha)}\|\Delta_{i}f\|_{p_{1}, \mu_{1}} \sum_{j=-\infty}^{i-3}2^{-\alpha j}\|\Delta_{j}g \|_{p_{2}, \mu_{2}}2^{\alpha(j-i)}\}_{i}\|_{l^{q}}\nonumber\\
&\leq&
C\|\{2^{i(s+\alpha)}\|\Delta_{i}f\|_{p_{1}, \mu_{1}} (\sup_{j}2^{-\alpha j}\|\Delta_{j}g \|_{p_{2}, \mu_{2}})\sum_{j\leq i-3}2^{\alpha(j-i)}\}_{i}\|_{l^{q}}\nonumber\\
&\leq& C\|f\|_{\dot{\mathcal{N}}_{p_{1}, \mu_{1}, q}^{s+\sigma_{1}}}\|g\|_{\dot{\mathcal{N}}_{p_{2},\mu_{2}, \infty}^{-\sigma_{1}}},\nonumber
\end{eqnarray}
where  $M$ in the second inequality denotes the Hardy-Littlewood
maximal operator and is bounded
in the Morrey space when $1<p<\infty$ and $0\leq\mu<n,$
we can refer to [\cite{adams}, Chapter 6] for the details.

From these estimates, we obtain
\begin{eqnarray}
\left\|h_{1}\right\|_{\dot{\mathcal{N}}{p,\mu, q}^{s}} \leq C\|f\|_{\dot{\mathcal{N}}_{p_{1}, \mu_{1}, q}^{s+\alpha}}\|g\|_{\dot{\mathcal{N}}_{p_{2},\mu_{2}, \infty}^{-\alpha}},\nonumber
\end{eqnarray}
where $ 1<p<\infty$,
$1<q<\infty$ $ 1<p_{1}<\infty, 1<p_{2} \leq \infty$  with
$ 1 / p=1 / p_{1}+1 / p_{2}$
and
 $\mu / p=\mu_{1} / p_{1}+\mu_{2} / p_{2}. $

If we replace the role of $f$ and $g$ in $h_{1}$ with that of $f$ and $g$ in $h_{2}$ ,
 respectively, we obtain
  the following similar  estimate as above:
\begin{eqnarray}
\left\|h_{2}\right\|_{\dot{\mathcal{N}}{p,\mu, q}^{s}} \leq C\|f\|_{\dot{\mathcal{N}}_{r_{1}, \nu_{1}, q}^{-\beta}}\|g\|_{\dot{\mathcal{N}}_{r_{2},\nu_{2}, \infty}^{s+\beta}},\nonumber
 \end{eqnarray}
    where  $1<p<\infty$,
   $1<q<\infty$ $1<r_{1}<\infty, 1<r_{2} \leq \infty$
    with $ 1 / p=1 / r_{1}+1 / r_{2}$
   and
   $\mu / p=\nu_{1} / r_{1}+\nu_{2} / r_{2}. $

 Since
 \begin{equation*}
 \operatorname{supp} \mathcal{F}((\varphi_{k} * f)(\varphi_{j} * g)) ) \subset\{\xi \in \mathbb{R}^{n} ;|\xi| \leq 2^{\max (k, l)+2}\},
 \end{equation*}
 there holds $\varphi_{j} *(\varphi_{k} * f)(\varphi_{j} * g)=0$ for $\max (k, l)+2 \leq j-1$.
 Thus, we can deal with $h_{3}$ as
\begin{eqnarray}
\left\|h_{3}\right\|_{\dot{\mathcal{N}}{p,\mu, q}^{s}}
&\leq&
\|\{2^{sj}\|\sum_{\operatorname{max}(k, l)+2 \geq j} \sum_{|k-l| \leq 2} \Delta_{j} \left(\Delta_{k}f\right)\left(\Delta_{l}g\right)\|_{p,\mu}\}_{j}\|_{l^{q}}\nonumber\\
&=&\|\{2^{sj}\|\sum_{r\geq -4} \sum_{|t| \leq 2} \Delta_{j} \left(\Delta_{j+r}f\right)\left(\Delta_{j+r+t}g\right)\|_{p,\mu}\}_{j}\|_{l^{q}}\nonumber\\
&\leq&
\sum_{r\geq -4} \sum_{|t| \leq 2}\|\{2^{sj}\| \Delta_{j} \left(\Delta_{j+r}f\right)\left(\Delta_{j+r+t}g\right)\|_{p,\mu}\}_{j}\|_{l^{q}}\nonumber\\
&\leq&
\sum_{r\geq -4} \sum_{|t| \leq 2}\|\{2^{sj}\| M
\left(\Delta_{j+r}f\right)\left(\Delta_{j+r+t}g\right)\|_{p,\mu}\}_{j}\|_{l^{q}}\nonumber
\end{eqnarray}
\begin{eqnarray}
&\leq&
\sum_{r\geq -4} \sum_{|t| \leq 2}\|\{2^{sj}\|
\left(\Delta_{j+r}f\right)\left(\Delta_{j+r+t}g\right)\|_{p,\mu}\}_{j}\|_{l^{q}}\nonumber\\
&\leq&
\sum_{r\geq -4} \sum_{|t| \leq 2}2^{-s r} 2^{\alpha t}\|\{\|
2^{(s+\alpha)(j+r)}\left(\Delta_{j+r}f\right)2^{-\alpha(j+r+t)}
\left(\Delta_{j+r+t}g\right)\|_{p,\mu}\}_{j}\|_{l^{q}}\nonumber\\
&\leq&
     \|\{\|2^{(s+\alpha)(i)}\left(\Delta_{i}f\right)\|_{p_{1},\mu_{1}}\}_{i}\|_{l^{q}}
       \|\{2^{-\alpha i}\|\left(\Delta_{i}f\right)\|_{p_{2},\mu_{2}}\}_{i}\|_{l^{\infty}}
       \sum_{r \geq-4} 2^{-s r}\nonumber\\
&\leq &C\|f\|_{\dot{\mathcal{N}}_{p_{1},\mu_{1}, q}^{s+\alpha}}\|g\|_{\dot{F}_{p_{2}, \mu_{2},\infty}^{-\alpha}}.\nonumber
\end{eqnarray}
   This proves Proposition \ref{prop32}.
    \end{proof}

The following lemma may be regarded as an embedding theorem in terms of the graph norm
associated with the maximal Lorentz regularity in Besov-Morrey spaces. Corresponding estimate to
that of usual Lebesgue spaces was proved by Giga-Sohr \cite{gs}
and \cite{ks5}
and to  that of Besov space was proved by  \cite{ks6}.

\begin{lem}\label{lemmax31}
Let $1<q \leq q_{0}<\infty, 0\leq\mu<n, 1<\alpha<\alpha_{0}<\infty,-\infty<s<s_{0}<\infty$ be as
\begin{equation*}
\frac{2}{\alpha_{0}}+\frac{n-\mu}{q_{0}}-s_{0}=\frac{2}{\alpha}+\frac{n-\mu}{q}-s-2.
\end{equation*}
Let $1<p<\infty$ satisfy
\begin{equation*}
  \frac{n-\mu}{q} \leq \frac{n-\mu}{p}<\frac{2}{\alpha_{0}}+\frac{n-\mu}{q_{0}}
  \left(=\frac{2}{\alpha}+\frac{n-\mu}{q}-s-2+s_{0}\right).
\end{equation*}
Suppose that a measurable function $u$ on $\mathbb{R}^{n} \times(0, T)$ satisfies that
\begin{equation}
u_{t},  Au \in L^{\alpha, r}(0, T ; \dot{\mathcal{N}}_{q,\mu, \infty}^{s})\nonumber
\end{equation}
with $u(0)=a \in \dot{\mathcal{N}}_{p,\mu, r}^{k}$ for $k=2+(n-\mu)/p-(2/\alpha+(n-\mu)/q-s) .$ Then it holds that
\begin{equation}
u \in L^{\alpha_{0}, r}(0, T ; \dot{\mathcal{N}}_{q_{0},\mu, 1}^{s_{0}})\nonumber
\end{equation}
with the estimate
\begin{eqnarray}
&&\|u\|_{L^{\alpha_{0}, r}(0, T ; \dot{\mathcal{N}}_{q_{0},\mu, 1}^{s_{0}})}\nonumber\\
&&\leq\left\|e^{-t A} a\right\|_{L^{\alpha_{0}, r}(0, T ; \dot{\mathcal{N}}_{q_{0},\mu, 1}^{s_{0}})}
+C\left(\left\|u_{t}\right\|_{L^{\alpha, r}(0, T ; \dot{\mathcal{N}}_{q,\mu, \infty}^{s})}+\|A u\|_{L^{\alpha, r}(0, T ; \dot{\mathcal{N}}_{q,\mu, \infty}^{s})}\right),\label{eqmax301}\\
&&\left\|e^{-t A} a\right\|_{L^{\alpha_{0}, r}(0, \infty ; \dot{\mathcal{N}}_{q_{0},\mu, 1}^{s_{0}})}
 \leq C\|a\|_{\dot{\mathcal{N}}_{p,\mu, r}^{k}},\label{eqmax302}
\end{eqnarray}
where $C=C(n,\mu, q, \alpha, s, p, r)$ is independent of $u, a$ and $T$.
\end{lem}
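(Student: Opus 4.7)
The plan is to split the solution using Duhamel's formula. Setting $f := u_t + Au \in L^{\alpha,r}(0,T; \dot{\mathcal{N}}^s_{q,\mu,\infty})$, the function $u$ satisfies the abstract Stokes equation $u_t + Au = f$ with $u(0) = a$, so Theorem \ref{thm1} (whose regularity index $s + 2 + (n-\mu)/p - (n-\mu)/q - 2/\alpha$ coincides with the present $k$ once the scaling relation is used) together with the variation-of-parameters formula yields
\begin{equation*}
u(t) = e^{-tA} a + \int_0^t e^{-(t-\tau) A} f(\tau)\, d\tau.
\end{equation*}
It therefore suffices to estimate the two terms separately in $L^{\alpha_0,r}(0,T; \dot{\mathcal{N}}^{s_0}_{q_0,\mu,1})$.

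For the linear part $e^{-tA}a$, yielding (\ref{eqmax302}), I plan to invoke Proposition \ref{propmax201} with $(q,s,\alpha)$ replaced by $(q_0, s_0-2, \alpha_0)$. The restriction $(n-\mu)/p < 2/\alpha_0 + (n-\mu)/q_0$ is part of the standing hypothesis, and a direct check using the scaling relation $2/\alpha_0 + (n-\mu)/q_0 - s_0 = 2/\alpha + (n-\mu)/q - s - 2$ shows that the associated regularity index $(s_0-2) + 2 + (n-\mu)/p - (n-\mu)/q_0 - 2/\alpha_0$ equals the prescribed $k$. Since $A$ acts as a two-step shift on Besov-Morrey spaces (giving $\|Ae^{-tA}a\|_{\dot{\mathcal{N}}^{s_0-2}_{q_0,\mu,1}} = \|e^{-tA}a\|_{\dot{\mathcal{N}}^{s_0}_{q_0,\mu,1}}$), Proposition \ref{propmax201} immediately delivers (\ref{eqmax302}).

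For the convolution term I propose combining the Besov-Morrey heat-semigroup decay of Lemma \ref{lem3}(iii) (applicable to $e^{-tA}$ by Remark \ref{remhw1}) with the Lorentz-space Hardy--Littlewood--Sobolev inequality of Proposition \ref{prop31}. Lemma \ref{lem3}(iii) with source $\dot{\mathcal{N}}^s_{q,\mu,\infty}$ and target $\dot{\mathcal{N}}^{s_0}_{q_0,\mu,1}$ gives
\begin{equation*}
\|e^{-(t-\tau)A} f(\tau)\|_{\dot{\mathcal{N}}^{s_0}_{q_0,\mu,1}} \leq C (t-\tau)^{\sigma - 1} \|f(\tau)\|_{\dot{\mathcal{N}}^s_{q,\mu,\infty}},
\end{equation*}
where $\sigma := 1 - \tfrac{1}{2}(s_0 - s) - \tfrac{1}{2}\big((n-\mu)/q - (n-\mu)/q_0\big)$. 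A short manipulation of the scaling identity shows $\sigma = 1/\alpha - 1/\alpha_0 \in (0,1)$, so Proposition \ref{prop31} applied to $g(\tau) := \chi_{(0,T)}(\tau)\|f(\tau)\|_{\dot{\mathcal{N}}^s_{q,\mu,\infty}}$ (extended by zero outside $(0,T)$) provides the required $L^{\alpha_0,r}(0,T)$ bound on the convolution. Combining this with the obvious triangle inequality $\|f\|_{L^{\alpha,r}(\dot{\mathcal{N}}^s_{q,\mu,\infty})} \leq \|u_t\|_{L^{\alpha,r}(\dot{\mathcal{N}}^s_{q,\mu,\infty})} + \|Au\|_{L^{\alpha,r}(\dot{\mathcal{N}}^s_{q,\mu,\infty})}$ produces (\ref{eqmax301}).

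The main obstacle is the exponent bookkeeping needed to harmonize the various ingredients. One must simultaneously verify that $s_0 > s$ and $(s_0-s) + (n-\mu)/q - (n-\mu)/q_0 < 2$ (the side conditions of Lemma \ref{lem3}(iii)); that $\sigma$ as above coincides with $1/\alpha - 1/\alpha_0$ (forced exactly by the scaling relation); that the standing condition $(n-\mu)/p < 2/\alpha_0 + (n-\mu)/q_0$ is precisely what the shifted Proposition \ref{propmax201} requires; and that $(n-\mu)/q \leq (n-\mu)/p$ is consistent with the Sobolev-type embedding of $\dot{\mathcal{N}}^k_{p,\mu,r}$ into the intermediate spaces used. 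Each of these reduces to an algebraic identity, and together they explain why the hypotheses of the lemma take the precise form stated.
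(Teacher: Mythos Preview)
Your proposal is correct and follows essentially the same approach as the paper: the Duhamel decomposition $u = e^{-tA}a + \int_0^t e^{-(t-\tau)A}f(\tau)\,d\tau$, followed by Lemma~\ref{lem3}(iii) combined with Proposition~\ref{prop31} for the convolution term, and a semigroup-interpolation argument for the linear term. The only difference is cosmetic: for the bound on $e^{-tA}a$ the paper repeats inline the interpolation argument of Proposition~\ref{propmax201} (choosing $k_1<k<k_2<s_0$ and interpolating between $\dot{\mathcal{N}}^{k_1}_{p,\mu,\infty}$ and $\dot{\mathcal{N}}^{k_2}_{p,\mu,\infty}$), whereas you invoke Proposition~\ref{propmax201} directly with the shifted parameters $(q_0,s_0-2,\alpha_0)$ and then use $\|Ae^{-tA}a\|_{\dot{\mathcal{N}}^{s_0-2}_{q_0,\mu,1}}=\|e^{-tA}a\|_{\dot{\mathcal{N}}^{s_0}_{q_0,\mu,1}}$---this is a legitimate and slightly more economical packaging of the same computation.
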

\begin{proof}
Let $f(t)=u_{t}+A u$.
Then it holds that $u(t)=U(t) a+F f(t)$, where $U(t) a=e^{-t A} a$
and $F f(t)=\int_{0}^{t} e^{-(t-\tau) A} f(\tau) d \tau$.
By the assumption, we have $f \in L^{\alpha, r}(0, T ; \dot{\mathcal{N}}_{q,\mu, \infty}^{s})$ with the
estimate
\begin{equation}\label{eqmax303}
\|f\|_{L^{\alpha, r}(0, T ; \dot{\mathcal{N}}_{q,\mu, \infty}^{s})} \leq\left\|u_{t}\right\|_{L^{\alpha, r}(0, T ; \dot{\mathcal{N}}_{q,\mu, \infty}^{s})}+\|A u\|_{L^{\alpha, r}(0, T ;\dot{\mathcal{N}}_{q,\mu, \infty}^{s})}.
\end{equation}
Since $q \leq q_{0}$ and $s<s_{0},$ we have by Lemma \ref{lem3}(iii) that
\begin{eqnarray*}
\|F f(t)\|_{\dot{\mathcal{N}}_{q_{0}, \mu, 1}^{s_{0}}}
&\leq& \int_{0}^{t}\left\|e^{-(t-\tau) A} f(\tau)\right\|_{\dot{\mathcal{N}}_{q_{0},\mu, 1}^{s_{0}}}
 \mbox{d}\tau\\
&\leq& C \int_{0}^{t}(t-\tau)^{-\frac{1}{2}(\frac{n-\mu}{q}-\frac{n-\mu}{q_{0}})-\frac{1}{2}
(s_{0}-s)}\|f(\tau)\|_{\dot{\mathcal{N}}_{q,\mu, \infty}^{s}}\mbox{d}\tau\\
&\leq& C \int_{0}^{t}(t-\tau)^{\sigma-1}\|f(\tau)\|_{\dot{\mathcal{N}}_{q,\mu, \infty}^{s}} \mbox{d}\tau,
\end{eqnarray*}
where $\sigma \equiv 1-\frac{1}{2}(\frac{n-\mu}{q}-\frac{n-\mu}{q_{0}})-\frac{1}{2}(s_{0}-s)$.
Since $2 / \alpha_{0}+(n-\mu)/q_{0}-s_{0}=2 / \alpha+(n-\mu)/q-s-2$ and since
$1<\alpha<\alpha_{0},$ we have
\begin{eqnarray}
0<\frac{1}{2}\left(\frac{n-\mu}{q}-\frac{n-\mu}{q_{0}}\right)+\frac{1}{2}\left(s_{0}-s\right)
=\frac{1}{2}\left\{\left(\frac{n-\mu}{q}-s\right)-\left(\frac{n-\mu}{q_{0}}-s_{0}\right)\right\}\nonumber\\
=\frac{1}{2}\left(\frac{2}{\alpha_{0}}-\frac{2}{\alpha}+2\right)
=1+\left(\frac{1}{\alpha_{0}}-\frac{1}{\alpha}\right)<1,\nonumber
\end{eqnarray}
\begin{equation}
\sigma
=1-\frac{1}{2}\left(\frac{n-\mu}{q}-\frac{n-\mu}{q_{0}}\right)
-\frac{1}{2}\left(s_{0}-s\right)
=\frac{1}{\alpha}
-\frac{1}{\alpha_{0}}<\frac{1}{\alpha},\nonumber
\end{equation}
which yields $0<\sigma<1$ and $1<\alpha<1 / \sigma$.
Since $1 / \alpha_{0}=1 / \alpha-\sigma,$ we may apply Proposition
\ref{prop31} with $\alpha_{*}=\alpha_{0}$ and with $g(\tau)=\|f(\tau)\|_{\dot{\mathcal{N}}_{q,\mu, \infty}^{s}}$
for $0<\tau \leq T, g(\tau)=0$
for $T \leq \tau$ so that
$g \in L^{\alpha, r}(0, \infty)$
and obtain that
$\|F f(\cdot)\|_{\dot{\mathcal{N}}_{q_{0},\mu, 1}^{s_{0}}} \in L^{\alpha_{0}, r}(0, T)$
with the estimate
\begin{equation*}
\left\| \left\| F f(\cdot)\right\|_{\dot{\mathcal{N}}_{q_{0},\mu, 1}^{s_{0}}}\right\|_{L^{\alpha_{0}, r}(0, T)}
 \leq C\left\| \left\| f(\cdot)\right\|_{\dot{\mathcal{N}}_{q,\mu, \infty}^{s}}\right\|_{L^{\alpha, r}(0, T)}
\end{equation*}
which yields with the aid of (\ref{eqmax303}) that
\begin{equation}\label{eqmax304}
\|F f\|_{L^{\alpha_{0}, r}(0, T ; \dot{\mathcal{N}}_{q_{0},\mu, 1}^{s_{0}})}
\leq
C\left(\left\|u_{t}\right\|_{L^{\alpha, r}(0, T ; \dot{\mathcal{N}}_{q,\mu, \infty}^{s})}+\|A u\|_{L^{\alpha, r}(0, T ; \dot{\mathcal{N}}_{q,\mu, \infty}^{s})}\right),
\end{equation}
where $C=C\left(n, q, \alpha, s, q_{0}, \alpha_{0}, s_{0}, r\right)$ is independent of $T$.
We next show that $U(\cdot) a \in L^{\alpha_{0}, r}\left(0, \infty ; \dot{\mathcal{N}}_{q_{0}, \mu, 1}^{s_{0}}\right)$ for
$a \in \dot{\mathcal{N}}_{p,\mu, r}^{k}$. By the assumption, we have that
\begin{equation*}
p \leq q \leq q_{0}, \quad k=(n-\mu) / p-(2 / \alpha+(n-\mu) / q-s-2)<s_{0},
\end{equation*}
and hence we may take $k_{1}, k_{2}$ and $0<\theta<1$ so that $k_{1}<k<k_{2}<s_{0}$
and so that $k=(1-\theta) k_{1}+\theta k_{2}.$ For $a \in \dot{\mathcal{N}}_{p,\mu, \infty}^{k_{i}}, i=1,2$, it holds by Lemma \ref{lem3}(iii) that
\begin{equation}
\left\|e^{-t A} a\right\|_{\dot{\mathcal{N}}_{q_{0},\mu, 1}^{s_{0}}} \leq C t^{-\frac{1}{2}(\frac{n-\mu}{p}-\frac{n-\mu}{q_{0}})
-\frac{1}{2}(s_{0}-k_{i})}\|a\|_{\dot{\mathcal{N}}_{p,\mu, \infty}^{k_{i}}},\ \ i=1, 2\nonumber
\end{equation}
for all $t>0 .$ Hence the map $S_{t}$
\begin{equation}
S_{t} : a \in \dot{\mathcal{N}}_{p,\mu, \infty}^{k_{i}} \rightarrow\left\|e^{-t A} a\right\|_{\dot{\mathcal{N}}_{q_{0},\mu, 1}^{s_{0}}} \in L^{\alpha_{i}, \infty}(0, \infty)\nonumber
\end{equation}
is a bounded sub-additive operator for
$1 / \alpha_{i}=\frac{1}{2}\left((n-\mu) / p-(n-\mu) / q_{0}\right)+\frac{1}{2}\left(s_{0}-k_{i}\right), i=1,2.$
By the real interpolation, $S_{t}$ extends to the map
\begin{equation*}
S_{t} : a \in\left(\dot{\mathcal{N}}_{p,\mu, \infty}^{k_{1}}, \dot{\mathcal{N}}_{p,\mu, \infty}^{k_{2}}\right)_{\theta, r} \rightarrow\left\|e^{-t A} a\right\|_{\dot{\mathcal{N}}_{q_{0},\mu, 1}^{s_{0}}} \in\left(L^{\alpha_{1}, \infty}(0, \infty), L^{\alpha_{2}, \infty}(0, \infty)\right)_{\theta, r}.
\end{equation*}
Since $\left(\dot{\mathcal{N}}_{p,\mu, \infty}^{k_{1}}, \dot{\mathcal{N}}_{p, \mu, \infty}^{k_{2}}\right)_{\theta, r}=\dot{\mathcal{N}}_{p,\mu, r}^{k}$, and since
$\left(L^{\alpha_{1}, \infty}(0, \infty), L^{\alpha_{2}, \infty}(0, \infty)\right)_{\theta, r}=L^{\alpha_{0}, r}(0, \infty)$ implied by
$(1-\theta) / \alpha_{1}+\theta / \alpha_{2}=\frac{1}{2}\left((n-\mu) / p-(n-\mu) / q_{0}\right)+\frac{1}{2}\left(s_{0}-k\right)=1 / \alpha_{0}$, we obtain the estimate
\begin{equation}\label{eqmax305}
\left\| \left\| e^{-t A} a\right\|_{\dot{\mathcal{N}}_{q_{0},\mu, 1}^{s_{0}}}\right\|_{L^{\alpha_{0}, r}(0, \infty)} \leq C\|a\|_{\dot{\mathcal{N}}_{p,\mu, r}^{k}}
\end{equation}
where $C=C\left(n, q, \alpha, s, q_{0}, \alpha_{0}, s_{0}, r\right)$.
Now the desired estimates (\ref{eqmax301}) and
(\ref{eqmax302}) are consequences of (\ref{eqmax304}) and (\ref{eqmax305}).
This proves Lemma \ref{lemmax31}.
\end{proof}

The following lemma is a bilinear estimate for the nonlinear term $u \cdot \nabla v$
in (\ref{ns}) in terms of the graph norm associated with the maximal Lorentz regularity theorem of the Stokes operator $A$. The space $L^{\alpha, r}(0, \infty ; \dot{\mathcal{N}}_{q,\mu, \infty}^{s})$
for $2 / \alpha+(n-\mu) / q-s=3$ is closely related to the scaling invariance with respect to equation (\ref{ns}).

\begin{lem}\label{lemmax32}
Let $1<q<\infty, 1<\alpha<\infty, 0\leq\mu<n$  and $ -1<s<\infty$ satisfy $2 / \alpha+(n-\mu) / q-s=3$.
Let $1<p \leq q$ and $1 \leq r \leq \infty.$
For measurable functions $u$ and $v$ in $\mathbb{R}^{n} \times(0, T), 0<T \leq \infty$ with
the properties that
\begin{eqnarray}
u_{t}, Au, v_{t},  Av \in L^{\alpha, r}(0, T ; \dot{\mathcal{N}}_{q, \mu, \infty}^{s}),\\
u(0)=a, v(0)=b \in \dot{\mathcal{N}}_{p,\mu, r}^{-1+(n-\mu) / p}
\end{eqnarray}
it holds that $ \mathbb{P}(u \cdot \nabla v) \in L^{\alpha, r}(0, T ; \dot{\mathcal{N}}_{q, \mu, \infty}^{s})$ with the estimate
\begin{eqnarray}\label{eqmax306}
&&\|\mathbb{P}(u \cdot \nabla v)\|_{L^{\alpha, r}(0, T ; \dot{\mathcal{N}}_{q,\mu, \infty}^{s})}\nonumber\\
&&\leq C\left(\left\|e^{-t A} a\right\|_{L^{\alpha_{0}, r}(0, T ; \dot{\mathcal{N}}_{q_{0},\mu, 1}^{s_{0}})}+\left\|u_{t}\right\|_{L^{\alpha, r}(0, T ; \dot{\mathcal{N}}_{q, \mu, \infty}^{s})}+\|A u\|_{L^{\alpha, r}(0, T ; \dot{\mathcal{N}}_{q,\mu, \infty}^{s})}\right)\nonumber\\
&&\times\left(\left\|e^{-t A} b\right\|_{L^{\alpha_{0}, r}(0, T ; \dot{\mathcal{N}}_{q_{0}, \mu, 1}^{s_{0}})}+\left\|v_{t}\right\|_{L^{\alpha, r}(0, T ; \dot{\mathcal{N}}_{q,\mu, \infty}^{s})}+\|A v\|_{L^{\alpha, r}(0, T ; \dot{\mathcal{N}}_{q,\mu,  \infty}^{s})}\right)
\end{eqnarray}
for some $\alpha<\alpha_{0}<\infty, q \leq q_{0}<\infty$ and $s<s_{0}<\infty$
such that $2 / \alpha_{0}+(n-\mu) / q_{0}-s_{0}=1$ where
$C=C\left(n,\mu, q, \alpha, q_{0}, \alpha_{0}, p, r\right)$
is a constant independent of $0<T \leq \infty$.
\end{lem}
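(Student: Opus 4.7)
The plan is to reduce (\ref{eqmax306}) to a pointwise-in-$t$ spatial bilinear bound, integrate in time via Hölder in Lorentz spaces, and rewrite the resulting $L^{\alpha_0, r}$-norms of $u$ and $v$ as graph norms using Lemma~\ref{lemmax31}. The natural parameter choice will be $\alpha_0 = 2\alpha$ (dictated by the time Hölder), with $q_0 \in [q, 2q]$ chosen so that $s_0 := 2/\alpha_0 + (n-\mu)/q_0 - 1$ satisfies $\sigma := s_0 - s - 1 > 0$; existence of such a $q_0$ follows from $s > -1$.

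For the pointwise spatial estimate, I use the divergence-free condition $\nabla \cdot u = 0$ inherited from the Navier-Stokes context to write $u \cdot \nabla v = \nabla \cdot (u \otimes v)$. Since $\mathbb{P}$ is a Calder\'on--Zygmund operator bounded on $\mathcal{M}_{q,\mu}$ and therefore block-wise on $\dot{\mathcal{N}}_{q,\mu,\infty}^{s}$, this gives
\begin{equation*}
\|\mathbb{P}(u \cdot \nabla v)\|_{\dot{\mathcal{N}}_{q,\mu,\infty}^{s}} \leq C \|u \otimes v\|_{\dot{\mathcal{N}}_{q,\mu,\infty}^{s+1}}.
\end{equation*}
Now $s + 1 > 0$ by hypothesis, which unlocks the product rule of Proposition~\ref{prop32}. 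I apply that proposition with $p_1 = r_2 = q_0$, $p_2 = r_1 = \tilde p$ where $1/\tilde p = 1/q - 1/q_0$, Morrey exponents all equal to $\mu$, and $\sigma_1 = \sigma_2 = \sigma$; this produces two paraproduct pieces of the form $\|u\|_{\dot{\mathcal{N}}_{q_0,\mu,\infty}^{s_0}} \|v\|_{\dot{\mathcal{N}}_{\tilde p,\mu,\infty}^{-\sigma}}$ and its $u \leftrightarrow v$ swap. Lemma~\ref{lem1}(ii) supplies the Sobolev--Morrey embedding $\dot{\mathcal{N}}_{q_0,\mu,1}^{s_0} \hookrightarrow \dot{\mathcal{N}}_{\tilde p,\mu,\infty}^{-\sigma}$ (its scaling identity is automatic from our choices, and $q_0 \leq \tilde p$ thanks to $q_0 \leq 2q$), while $\dot{\mathcal{N}}_{q_0,\mu,1}^{s_0} \hookrightarrow \dot{\mathcal{N}}_{q_0,\mu,\infty}^{s_0}$ is trivial. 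This yields
\begin{equation*}
\|\mathbb{P}(u \cdot \nabla v)(t)\|_{\dot{\mathcal{N}}_{q,\mu,\infty}^{s}} \leq C \|u(t)\|_{\dot{\mathcal{N}}_{q_0,\mu,1}^{s_0}} \|v(t)\|_{\dot{\mathcal{N}}_{q_0,\mu,1}^{s_0}}.
\end{equation*}

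Taking the $L^{\alpha,r}$-norm in $t$, H\"older's inequality in Lorentz spaces gives $L^{\alpha_0,r} \cdot L^{\alpha_0,r} \hookrightarrow L^{\alpha_0/2, r/2}$, and combining with the inclusion $L^{\alpha_0/2, r/2} \hookrightarrow L^{\alpha_0/2, r}$ (valid for $r/2 \leq r$) together with $\alpha_0/2 = \alpha$ yields
\begin{equation*}
\|\mathbb{P}(u \cdot \nabla v)\|_{L^{\alpha,r}(\dot{\mathcal{N}}_{q,\mu,\infty}^{s})} \leq C \|u\|_{L^{\alpha_0,r}(\dot{\mathcal{N}}_{q_0,\mu,1}^{s_0})} \|v\|_{L^{\alpha_0,r}(\dot{\mathcal{N}}_{q_0,\mu,1}^{s_0})}.
\end{equation*}
Lemma~\ref{lemmax31} applied to $u$ (with initial value $a$) and to $v$ (with initial value $b$) then bounds each factor by $\|e^{-tA}a\|_{L^{\alpha_0, r}(\dot{\mathcal{N}}_{q_0,\mu,1}^{s_0})} + C(\|u_t\|_{L^{\alpha,r}(\dot{\mathcal{N}}_{q,\mu,\infty}^{s})} + \|Au\|_{L^{\alpha,r}(\dot{\mathcal{N}}_{q,\mu,\infty}^{s})})$ and the analogous expression for $v$; multiplying out gives (\ref{eqmax306}).

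The main obstacle is verifying the joint feasibility of all the algebraic constraints: the NS scaling $2/\alpha + (n-\mu)/q - s = 3$, the subcritical scaling $2/\alpha_0 + (n-\mu)/q_0 - s_0 = 1$, the Hölder--Lorentz relation $\alpha_0 = 2\alpha$, the positivity $\sigma > 0$, the spatial Hölder identity $1/q = 1/q_0 + 1/\tilde p$ with $q_0 \leq 2q$ (to secure $\tilde p \geq q_0$), the Sobolev--Morrey scaling identity, and the hypothesis (\ref{eqmax102}) required for applying Lemma~\ref{lemmax31}. A short algebraic check reduces the whole system to the existence of an admissible $q_0 \in [q, 2q]$, which in turn follows from $s > -1$ and (\ref{eqmax102}); once such a $q_0$ is chosen, all of the estimates above interlock cleanly.
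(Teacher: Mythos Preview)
Your proposal is correct and follows essentially the same route as the paper. Both arguments (i) rewrite $\mathbb{P}(u\cdot\nabla v)=\nabla\cdot\mathbb{P}(u\otimes v)$, (ii) apply the bilinear Leibniz rule of Proposition~\ref{prop32} to $\|u\otimes v\|_{\dot{\mathcal{N}}^{s+1}_{q,\mu,\infty}}$, (iii) collapse the resulting factors via Sobolev--Morrey embeddings to a single space $\dot{\mathcal{N}}^{s_0}_{q_0,\mu,\cdot}$ with $2/\alpha_0+(n-\mu)/q_0-s_0=1$ and $\alpha_0=2\alpha$, (iv) use H\"older in Lorentz time, and (v) close with Lemma~\ref{lemmax31}. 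The only cosmetic difference is that the paper picks $\sigma$ strictly inside $(0,s_0-s-1)$ and introduces two auxiliary exponents $q_1,q_2>q_0$ (embedding both factors), whereas you take the endpoint $\sigma=s_0-s-1$, keeping one factor directly in $\dot{\mathcal{N}}^{s_0}_{q_0,\mu,\infty}$ and embedding only the other into $\dot{\mathcal{N}}^{-\sigma}_{\tilde p,\mu,\infty}$; this forces your extra constraint $q_0\le 2q$, which you correctly accommodate.
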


\begin{proof}
Let us take $\alpha_{0}>\alpha, q_{0} \geq q$ and $s_{0} \in \mathbb{R}$ so that
\begin{equation}\label{eqmax307}
\alpha_{0}=2 \alpha, \quad \max \{(n-\mu) / p-1 / \alpha, s+2-1 / \alpha\}<(n-\mu)/ q_{0}, \quad \max \{s+1, (n-\mu) / p-1\}<s_{0}
\end{equation}
satisfying
\begin{equation}\label{eqmax308}
2 / \alpha_{0}+(n-\mu) / q_{0}-s_{0}=1.
\end{equation}
Since $-1<s,$ we have by $(\ref{eqmax307})$ that $0<s+1<s_{0}$,
and hence there exists $\sigma \in \mathbb{R}$ such that
\begin{equation}\label{eqmax309}
0<\sigma<s_{0}-(s+1).
\end{equation}
Let us define $q_{1}$ and $q_{2}$ by
\begin{eqnarray}\label{eqmax310}
s_{0}-\frac{n-\mu}{q_{0}}=s+1+\sigma-\frac{n-\mu}{q_{1}},\\
s_{0}-\frac{n-\mu}{q_{0}}=-\sigma-\frac{n-\mu}{q_{2}}.
\end{eqnarray}
By $(\ref{eqmax309})$ we have that $q_{0}<q_{1}$ and $q_{0}<q_{2}$.
Hence it follows that from Lemma \ref{lem3}(iii) that
\begin{equation}\label{eqmax311}
\dot{\mathcal{N}}_{q_{0},\mu, \infty}^{s_{0}} \hookrightarrow \dot{\mathcal{N}}_{q_{1},\mu, \infty}^{s+1+\sigma},
 \quad \dot{\mathcal{N}}_{q_{0}, \mu, \infty}^{s_{0}} \hookrightarrow \dot{\mathcal{N}}_{q_{2},\mu, \infty}^{-\sigma}.
\end{equation}
Furthermore, since $2 / \alpha_{0}+(n-\mu) / q_{0}-s_{0}=1$ and $\alpha_{0}=2 \alpha,$ we have that
\begin{equation}\label{eqmax312}
1 / q_{1}+1 / q_{2}=1 / q.
\end{equation}

Then it follows from (\ref{eqmax311}), (\ref{eqmax312}) and Proposition \ref{prop32} that
\begin{eqnarray*}
&&\|\mathbb{P}(u \cdot \nabla v)\|_{\dot{\mathcal{N}}_{q,\mu, \infty}^{s}}
=\|\nabla \cdot \mathbb{P}(u \otimes v)\|_{\dot{\mathcal{N}}_{q,\mu, \infty}^{s}}
=\|\mathbb{P}(u \otimes v)\|_{\dot{\mathcal{N}}_{q,\mu, \infty}^{s+1}}
\leq C\|(u \otimes v)\|_{\dot{\mathcal{N}}_{q,\mu, \infty}^{s+1}}\\
&&\leq C(\|u\|_{\dot{\mathcal{N}}_{q_{1},\mu, \infty}^{s+1+\sigma}}\|v\|_{\dot{\mathcal{N}}_{q_{2},\mu, \infty}^{-\sigma}}
+\|u\|_{\dot{\mathcal{N}}_{q_{2},\mu, \infty}^{-\sigma}}\|v\|_{\dot{\mathcal{N}}_{q_{1},\mu, \infty}^{s+1+\sigma}})\\
&&\leq C\|u\|_{\dot{\mathcal{N}}_{q_{0},\mu, \infty}^{s_{0}}}\|v\|_{\dot{\mathcal{N}}_{q_{0}, \mu,\infty}^{s_{0}}}.
\end{eqnarray*}
Since $\alpha_{0}=2 \alpha$ and since $ L^{\alpha, r}(0, T) \hookrightarrow L^{\alpha, 2r}(0, T)$, we have by the H\"{o}lder inequality in the Lorentz space on $(0, T)$ that
\begin{eqnarray*}
&&\left\| \| \mathbb{P}(u, \cdot \nabla v)\|_{\dot{\mathcal{N}}_{q, \mu, \infty}^{s}}\right\|_{L^{\alpha, r}(0, T)}\\
&&\leq C
\left\|\|u\|_{\dot{\mathcal{N}}_{q_{0},\mu, \infty}^{s_{0}}}\right\|_{L^{\alpha_{0}, 2r}(0, T)}\left\|\|v\|_{\dot{\mathcal{N}}_{q_{0},\mu, \infty}^{s_{0}}}\right\|_{L^{\alpha_{0}, 2r}(0, T)}\\
&&\leq C\left\|\|u\|_{\dot{\mathcal{N}}_{q_{0},\mu, \infty}^{s_{0}}}\right\|_{L^{\alpha_{0}, r}(0, T)}\left\|\|v\|_{\dot{\mathcal{N}}_{q_{0}, \mu,\infty}^{s_{0}}}\right\|_{L^{\alpha_{0}, r}(0, T)},
\end{eqnarray*}
namely
\begin{equation}\label{eqmax313}
\| \mathbb{P}(u \cdot \nabla v)\|_{L^{\alpha, r}(0, T;\dot{\mathcal{N}}_{q,\mu, \infty}^{s})}
\leq C\| u\|_{L^{\alpha_{0}, r}(0, T;\dot{\mathcal{N}}_{q_{0},\mu, \infty}^{s_{0}})}
\| v\|_{{L^{\alpha_{0}, r}(0, T;\dot{\mathcal{N}}_{q_{0},\mu, \infty}^{s_{0}}})},
\end{equation}
where $C=C\left(n,\mu, q, \alpha, q_{0}, \alpha_{0}, r\right)$
is a constant independent of $0<T \leq \infty$.
Since $p \leq q, (n-\mu) / p-1<s_{0}$,
and since $2 / \alpha_{0}+(n-\mu) / q_{0}-s_{0}=1,$ we have
\begin{eqnarray*}
(n-\mu) / q \leq (n-\mu) / p<s_{0}+1=2 / \alpha_{0}+(n-\mu) / q_{0},\\
k=2+(n-\mu) / p-(2 / \alpha+(n-\mu) / q-s)=-1+(n-\mu) / p.
\end{eqnarray*}
Hence it follows from Lemma \ref{lemmax31} with $k=-1+(n-\mu) / p$ that
\begin{eqnarray}
&&\|u\|_{L^{\alpha_{0}, r}(0, T ; \dot{\mathcal{N}}_{q_{0},\mu, \infty}^{s_{0}})}\nonumber\\
&&\leq C\left(\left\|e^{-t A} a\right\|_{L^{\alpha_{0}, r}(0, T ; \dot{\mathcal{N}}_{q_{0},\mu, 1}^{s_{0}})}\right.
+\left\|u_{t}\right\|_{L^{\alpha, r}(0, T ; \dot{\mathcal{N}}_{q,\mu, \infty}^{s})}
+\|A u\|_{L^{\alpha, r}(0, T ; \dot{\mathcal{N}}_{q,\mu, \infty}^{s})} ),\label{eqmax314}\\
&&\|v\|_{L^{\alpha_{0}, r}(0, T ; \dot{B}_{q_{0},\mu, \infty}^{s_{0}})}\nonumber\\
&&\leq C\left(\left\|e^{-t A} b\right\|_{L^{\alpha_{0}, r}(0, T ; \dot{\mathcal{N}}_{q_{0}, \mu, 1}^{s_{0}})}\right.
+\left\|v_{t}\right\|_{L^{\alpha, r}(0, T ; \dot{\mathcal{N}}_{p, \mu, \infty}^{s})}
+\|A v\|_{L^{\alpha, r}(0, T ; \dot{\mathcal{N}}_{p,\mu, \infty}^{s})} )\label{eqmax315}
\end{eqnarray}
with $C=C\left(n,\mu, q, \alpha, q_{0}, \alpha_{0}, r \right)$ independent of $0<T \leq \infty$.
Now from (\ref{eqmax313}), (\ref{eqmax314}) and (\ref{eqmax315}), we obtain the desired estimate (\ref{eqmax306}). This proves Lemma \ref{lemmax32}.
\end{proof}

The following lemma
is useful to prove
the additional
regularity of $u.$
It states that if $u$ itself belongs to certain scaling invariant class, then we
gain the regularity of the nonlinear term $u\cdot \nabla v $ in accordance with that of $v$.

\begin{lem}\label{lemmax33}
Let $u \in L^{\alpha_{0}, r}(0, T ; \dot{\mathcal{N}}_{q_{0}, \mu_{0}, \infty}^{s_{0}})$,
$0<T \leq \infty$ for $2 / \alpha_{0}+(n-\mu_{0}) / q_{0}-s_{0}=1$ with $s_{0}>0$.
Suppose that $q^{*}, \alpha^{*}, s^{*}, \mu^{*}$ and $p^{*}$ satisfy
\begin{eqnarray}\label{eqmax316}
(n-\mu^{*})/ q^{*} \leq (n-\mu^{*}) / p^{*}<2 / \alpha^{*}+(n-\mu^{*})/ q^{*}, \nonumber\\
\quad 1<\alpha^{*}<\alpha_{0},
\quad-1<s^{*}<s_{0}-1,\quad 0\leq\mu^{*}\leq\mu_{0}<n.
\end{eqnarray}
Then for every function $v$ with $v_{t},$ $Av\in L^{\alpha^{*}, r^{*}}(0, T ; \dot{\mathcal{N}}_{q^{*},\mu^{*}, \infty}^{s^{*}}),$ $0<T \leq \infty$ for $1 \leq r^{*} \leq \infty,$ and $v(0) \in \mathcal{\dot{\mathcal{N}}}_{p^{*},\mu^{*}, r^{*}}^{k^{*}}$ for $k^{*}=2+(n-\mu^{*}) / p^{*}-(2 / \alpha^{*}+(n-\mu^{*} )/q^{*}-s^{*})$, it holds that
\begin{equation*}
\mathbb{P}(u \cdot \nabla v) \in L^{\alpha^{*}, r^{*}}(0, T ; \dot{\mathcal{N}}_{q^{*}, \mu^{*} \infty}^{s^{*}})
\end{equation*}
with the estimate
\begin{eqnarray}\label{eqmax317}
&&\|\mathbb{P}(u \cdot \nabla v)\|_{L^{\alpha^{*}, r^{*}}(0, T ; \dot{\mathcal{N}}_{q^{*},\mu^{*}, \infty}^{s^{*}})}\nonumber\\
&&\leq C\|u\|_{L^{\alpha_{0}, r}(0, T ; \dot{\mathcal{N}}_{q_{0},\mu_{0}, \infty}^{s_{0}})}
(\|v(0)\|_{ \dot{\mathcal{N}}_{p^{*},\mu^{*}, r^{*}}^{k^{*}}}+\|v_{t}\|_{L^{\alpha^{*}, r^{*}}(0, T ; \dot{\mathcal{N}}_{q^{*}, \mu^{*}, \infty}^{s^{*}})}\nonumber\\
&&+\|A v\|_{L^{\alpha^{*}, r^{*}}(0, T ; \dot{\mathcal{N}}_{q^{*},\mu^{*}, \infty}^{s^{*}})} )
\end{eqnarray}
where $C=C\left(n,\mu_{0}, \mu^{*}, q_{0}, \alpha_{0}, q^{*}, \alpha^{*}, s^{*}, p^{*}, r^{*}\right)$ is a constant independent of $0<T \leq \infty$.
\end{lem}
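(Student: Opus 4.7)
The proof will follow the template of Lemma \ref{lemmax32}, with $u$ contributing the scaling-invariant factor and $v$ supplying the regularity via Lemma \ref{lemmax31}. The asymmetry between the two factors will be reflected in an asymmetric choice of paraproduct partners.

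First I would write $\mathbb{P}(u \cdot \nabla v) = \nabla \cdot \mathbb{P}(u \otimes v)$ and, using the boundedness of $\mathbb{P}$ on homogeneous Besov-Morrey spaces, reduce matters to estimating $\|u \otimes v\|_{\dot{\mathcal{N}}_{q^{*}, \mu^{*}, \infty}^{s^{*}+1}}$. Since $s^{*}+1>0$ by hypothesis, Proposition \ref{prop32} applies. I would choose small $\sigma_{1}, \sigma_{2}>0$ and H\"older partners $(p_{1},\mu_{1})$, $(p_{2},\mu_{2})$, $(r_{1},\nu_{1})$, $(r_{2},\nu_{2})$ satisfying the index/measure conditions of that proposition, so that $u$ is placed into $\dot{\mathcal{N}}_{p_{2},\mu_{2},\infty}^{-\sigma_{1}}$ and $\dot{\mathcal{N}}_{r_{1},\nu_{1},\infty}^{-\sigma_{2}}$; by the Sobolev-Morrey embedding Lemma \ref{lem1}(ii) these are both controlled by $\|u\|_{\dot{\mathcal{N}}_{q_{0}, \mu_{0}, \infty}^{s_{0}}}$ (using $s_{0}>0$, $\mu^{*}\leq\mu_{0}$, and the scaling identity $s_{0}-(n-\mu_{0})/q_{0}=-\sigma_{j}-(n-\mu_{\cdot})/p_{\cdot}$). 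The factor involving $v$ then appears in $\dot{\mathcal{N}}_{p_{1},\mu_{1},\infty}^{s^{*}+1+\sigma_{1}}$ and $\dot{\mathcal{N}}_{r_{2},\nu_{2},\infty}^{s^{*}+1+\sigma_{2}}$, both of which embed into a single target space $\dot{\mathcal{N}}_{q_{0}^{*},\mu^{*},\infty}^{s_{0}^{*}}$ for suitable auxiliary $(q_{0}^{*},s_{0}^{*})$.

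Next I would apply H\"older's inequality in Lorentz spaces on $(0,T)$ with $1/\alpha^{*}=1/\alpha_{0}+1/\alpha_{0}^{*}$, which, together with the pointwise bound above, forces the scaling relation
\begin{equation*}
\frac{2}{\alpha_{0}^{*}}+\frac{n-\mu^{*}}{q_{0}^{*}}-s_{0}^{*}=\frac{2}{\alpha^{*}}+\frac{n-\mu^{*}}{q^{*}}-s^{*}-2.
\end{equation*}
The assumptions $\alpha^{*}<\alpha_{0}$, $s^{*}<s_{0}-1$, $q^{*}\leq q_{0}$ (implied by $(\ref{eqmax316})$ and $\mu^{*}\leq\mu_{0}$) and $2/\alpha_{0}+(n-\mu_{0})/q_{0}-s_{0}=1$ provide enough slack to select $\alpha_{0}^{*}, q_{0}^{*}, s_{0}^{*}$ with $q^{*}\leq q_{0}^{*}$, $\alpha^{*}<\alpha_{0}^{*}$ and $s^{*}<s_{0}^{*}$ fulfilling the above identity. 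This produces
\begin{equation*}
\|\mathbb{P}(u\cdot\nabla v)\|_{L^{\alpha^{*}, r^{*}}(0,T;\dot{\mathcal{N}}_{q^{*},\mu^{*},\infty}^{s^{*}})}
\leq C\|u\|_{L^{\alpha_{0}, r}(0,T;\dot{\mathcal{N}}_{q_{0},\mu_{0},\infty}^{s_{0}})}\,\|v\|_{L^{\alpha_{0}^{*}, r^{*}}(0,T;\dot{\mathcal{N}}_{q_{0}^{*},\mu^{*},\infty}^{s_{0}^{*}})}.
\end{equation*}

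Finally, I would invoke Lemma \ref{lemmax31} with input $(q^{*}, \alpha^{*}, s^{*}, \mu^{*})$ and output $(q_{0}^{*}, \alpha_{0}^{*}, s_{0}^{*}, \mu^{*})$. The scaling identity displayed above is precisely the one required by that lemma, and the initial-data index $k^{*}=2+(n-\mu^{*})/p^{*}-(2/\alpha^{*}+(n-\mu^{*})/q^{*}-s^{*})$ matches exactly; together with the inclusion $\dot{\mathcal{N}}_{q_{0}^{*},\mu^{*},1}^{s_{0}^{*}}\hookrightarrow \dot{\mathcal{N}}_{q_{0}^{*},\mu^{*},\infty}^{s_{0}^{*}}$, this bounds the last factor by $\|v(0)\|_{\dot{\mathcal{N}}_{p^{*},\mu^{*},r^{*}}^{k^{*}}}+\|v_{t}\|_{L^{\alpha^{*},r^{*}}(\dot{\mathcal{N}}_{q^{*},\mu^{*},\infty}^{s^{*}})}+\|Av\|_{L^{\alpha^{*},r^{*}}(\dot{\mathcal{N}}_{q^{*},\mu^{*},\infty}^{s^{*}})}$, yielding $(\ref{eqmax317})$.

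The main obstacle is the simultaneous bookkeeping of indices: one must choose $\sigma_{1},\sigma_{2},p_{i},r_{i},\mu_{i},\nu_{i},\alpha_{0}^{*},q_{0}^{*},s_{0}^{*}$ so that the H\"older conditions in Proposition \ref{prop32}, the Sobolev-Morrey embeddings for $u$ and $v$, H\"older in Lorentz time, and the hypotheses of Lemma \ref{lemmax31} all fit together. The structural assumptions $s^{*}>-1$, $s_{0}>0$, $\alpha^{*}<\alpha_{0}$, $\mu^{*}\leq\mu_{0}$, and $(\ref{eqmax316})$ furnish exactly the amount of freedom needed to make such a choice, mirroring (with asymmetric roles for the two arguments) the construction in Lemma \ref{lemmax32}.
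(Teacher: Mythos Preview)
Your proposal is correct and follows essentially the same route as the paper: choose auxiliary exponents $\alpha_{0}^{*},q_{0}^{*},s_{0}^{*}$ via $1/\alpha^{*}=1/\alpha_{0}+1/\alpha_{0}^{*}$ and the scaling identity $2/\alpha_{0}^{*}+(n-\mu^{*})/q_{0}^{*}-s_{0}^{*}=2/\alpha^{*}+(n-\mu^{*})/q^{*}-s^{*}-2$, apply Proposition~\ref{prop32} with asymmetric $\sigma_{1},\sigma_{2}$ so that $u$ lands in spaces controlled by $\dot{\mathcal{N}}_{q_{0},\mu_{0},\infty}^{s_{0}}$ and $v$ by $\dot{\mathcal{N}}_{q_{0}^{*},\mu^{*},\infty}^{s_{0}^{*}}$, then use H\"older in Lorentz time and Lemma~\ref{lemmax31} for the $v$ factor. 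One small slip: the embedding direction for the $v$ spaces should read $\dot{\mathcal{N}}_{q_{0}^{*},\mu^{*},\infty}^{s_{0}^{*}}\hookrightarrow \dot{\mathcal{N}}_{p_{1},\mu_{1},\infty}^{s^{*}+1+\sigma_{1}}$ (not the reverse), and the parenthetical claim $q^{*}\leq q_{0}$ is neither implied by \eqref{eqmax316} nor needed.
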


\begin{proof}
Let us take $\alpha_{0}^{*}>\alpha^{*}, q_{0}^{*} \geq q^{*}$ and $s_{0}^{*}>s^{*}+1$ so that
\begin{equation}\label{eqmax318}
2 / \alpha_{0}^{*}+(n-\mu^{*}) / q_{0}^{*}-s_{0}^{*}=2 / \alpha^{*}+(n-\mu^{*})/ q^{*}-s^{*}-2,
\end{equation}
\begin{equation}\label{eqmax319}
1 / \alpha_{0}^{*}=1 / \alpha^{*}-1 / \alpha_{0},
\end{equation}
\begin{equation}\label{eqmax320}
(n-\mu^{*})/ p^{*}-\left(2 / \alpha^{*}+(n-\mu^{*})/ q^{*}\right)+s^{*}+2<s_{0}^{*}.
\end{equation}

Since $s^{*}+1<s_{0}^{*},$ it holds that $0<s_{0}^{*}-\left(s^{*}+1\right)$.
Since $s_{0}>0,$ there exists $\sigma_{1}>0$ such that
\begin{equation}\label{eqmax321}
-s_{0}<\sigma_{1}<s_{0}^{*}-\left(s^{*}+1\right).
\end{equation}
Since $s^{*}<s_{0}-1$ and since $s^{*}+1-s_{0}<s^{*}+1<s_{0}^{*}$
implied by $s_{0}>0,$ it holds that
\begin{equation}
0<s_{0}-\left(s^{*}+1\right), \quad-s_{0}^{*}<s_{0}-\left(s^{*}+1\right).
\end{equation}
Hence there exists $\sigma_{2}>0$ such that
\begin{equation}\label{eqmax322}
-s_{0}^{*}<\sigma_{2}<s_{0}-\left(s^{*}+1\right).
\end{equation}
Let us define $q_{1}^{*}, q_{2}^{*}, \tilde{q}_{1}^{*}$ and $\tilde{q}_{2}^{*}$ by

\begin{eqnarray*}
\frac{n-\mu^{*}}{q_{1}^{*}}-(s^{*}+1+\sigma_{1})
=\frac{n-\mu^{*}}{q_{0}^{*}}-s_{0}^{*},\\
\frac{n-\mu^{*}}{q_{2}^{*}}+\sigma_{1}
=\frac{n-\mu_{0}}{q_{0}}-s_{0},
\end{eqnarray*}
\begin{eqnarray*}
\frac{n-\mu^{*}}{\tilde{q}_{1}^{*}}+\sigma_{2}
=\frac{n-\mu^{*}}{q_{0}^{*}}-s_{0}^{*},\\
\frac{n-\mu^{*}}{\tilde{q}_{2}^{*}}
-(s^{*}+1+\sigma_{2})=\frac{n-\mu_{0}}{q_{0}}-s_{0}.
\end{eqnarray*}
By (\ref{eqmax321}) and (\ref{eqmax322}), we have
\begin{equation*}
q_{0}^{*}<q_{1}^{*};\quad q_{0}<q_{2}^{*};\quad q_{0}^{*}<\tilde{q}_{1}^{*}; \quad q_{0}<\tilde{q}_{2}^{*}.
\end{equation*}
Hence it follows from Lemma \ref{lem1}(ii) that
\begin{eqnarray}
\dot{\mathcal{N}}_{q_{0}^{*}, \mu^{*},\infty}^{s_{0}^{*}} \hookrightarrow \dot{\mathcal{N}}_{q_{1}^{*},\mu^{*}, \infty}^{s^{*}+1+\sigma_{1}},\nonumber\\
\dot{\mathcal{N}}_{q_{0}, \mu_{0}, \infty}^{s_{0}} \hookrightarrow \dot{\mathcal{N}}_{q_{2}^{*},\mu^{*}, \infty}^{-\sigma_{1}},\nonumber\\
\dot{\mathcal{N}}_{q_{0}^{*},\mu^{*}, \infty}^{s_{0}^{*}} \hookrightarrow \dot{\mathcal{N}}_{\tilde{q}_{1}^{*}, \mu^{*}, \infty}^{-\sigma_{2}},\nonumber\\
\dot{\mathcal{N}}_{q_{0},\mu_{0}, \infty}^{s_{0}} \hookrightarrow \dot{\mathcal{N}}_{\tilde{q}_{2}^{*}, \mu^{*}, \infty}^{s^{*}+1+\sigma_{2}}.\label{eqmax325}
\end{eqnarray}
Furthermore by (\ref{eqmax318}) and (\ref{eqmax319}), we have that
\begin{equation}\label{eqmax326}
\frac{1}{q_{1}^{*}}+\frac{1}{q_{2}^{*}}
=\frac{1}{q^{*}},\ \ \ \ \
\frac{1}{\tilde{q}_{1}^{*}}+\frac{1}{\tilde{q}_{2}^{*}}
=\frac{1}{\tilde{q}^{*}}.
\end{equation}
Hence it follows from (\ref{eqmax325}), (\ref{eqmax326})
and Proposition \ref{prop32} that
\begin{eqnarray}
&&\|\mathbb{P}(u \cdot \nabla v)\|_{\dot{\mathcal{N}}_{q^{*},\mu^{*}, \infty}^{s^{*}}}
=\|\nabla \cdot \mathbb{P}(u \otimes v)\|_{\dot{\mathcal{N}}_{q^{*},\mu^{*}, \infty}^{s^{*}}}
\leq C\|u \otimes v\|_{\dot{\mathcal{N}}_{q^{*},\mu^{*}, \infty}^{s^{*}+1}}\\
&&\leq C(\|v\|_{\dot{\mathcal{N}}_{q_{1}^{*},\mu^{*}, \infty}^{s^{*}+1+\sigma_{1}}}\|u\|_{\dot{\mathcal{N}}_{q_{2}^{*},\mu^{*}, \infty}^{-\sigma_{1}}}
+\|v\|_{\dot{\mathcal{N}}_{\tilde{q}_{1}^{*},\mu^{*} ,\infty}^{-\sigma_{2}}}\|u\| _{\dot{\mathcal{N}}_{\tilde{q}_{2}^{*},\mu^{*}, \infty}^{s^{*}+1+\sigma_{2}}})\\
&&\leq C\|v\|_{\dot{\mathcal{N}}_{q_{0}^{*},\mu^{*}, \infty}^{s_{0}^{*}}}\|u\|_{\dot{\mathcal{N}}_{q_{0},\mu_{0}, \infty}^{s_{0}}}
\end{eqnarray}
where $C=C\left(n,\mu_{0},\mu^{*}, q_{0}, \alpha_{0}, s_{0}, q^{*}, \alpha^{*}, s^{*}\right)$.
Since $1 / \alpha^{*}=1 / \alpha_{0}^{*}+1 / \alpha_{0}$,
implied by (\ref{eqmax319}), we have by
the H\"{o}lder inequality that
\begin{eqnarray}\label{eqmax327}
&&\left\|\|\mathbb{P}(u \cdot \nabla v)\|_{\dot{\mathcal{N}}_{q^{*},\mu^{*}, \infty}^{s^{*}}}\right\|_{L^{\alpha^{*}, r^{*}}(0, T)}\nonumber\\
&&\leq
\left\|\|v\|_{\dot{\mathcal{N}}_{q_{0}^{*}, \mu^{*},\infty}^{s_{0}^{*}}}\right\|_{L^{\alpha_{0}^{*}, r^{*}}(0, T)}
\left\|\|u\|_{\dot{\mathcal{N}}_{q_{0},\mu_{0}, \infty}^{s_{0}}}\right\|_{L^{\alpha_{0}, \infty}(0, T)}\nonumber\\
&&\leq\|\|v\|_{\dot{\mathcal{N}}_{q_{0}^{*},\mu^{*}, \infty}^{s_{0}^{*}}}\|_{L^{\alpha_{0}^{*}, r^{*}}(0, T)}
\|\|u\|_{\dot{\mathcal{N}}_{p_{0},\mu_{0}, \infty}^{s_{0}}}\|_{L^{\alpha_{0}, r}(0, T)}
\end{eqnarray}
where $C=C\left(n,\mu_{0},\mu^{*}, q_{0}, \alpha_{0}, q^{*}, \alpha^{*}, s^{*}, p^{*}, r^{*}\right)$ is a constant independent of $0<T \leq \infty$.
By (\ref{eqmax318}) and
(\ref{eqmax320}), we have that
$(n-\mu^{*}) / q^{*} \leq (n-\mu^{*}) / p^{*}<2 / \alpha_{0}^{*}+(n-\mu^{*}) / q_{0}^{*}$
and hence it follows from Lemma \ref{lemmax31} that
\begin{eqnarray}\label{eqmax328}
&&\|v\|_{L^{\alpha_{0}^{*}, r^{*}} (0, T ; \dot{\mathcal{N}}_{q_{0}^{*},\mu^{*}, \infty}^{s_{0}^{*}})}\\
&&\leq C(\|v(0)\|_{\dot{\mathcal{N}}_{p^{*},\mu^{*}, r^{*}}^{k^{*}}}+\|v_{t}\|_{L^{\alpha^{*}, r^{*}}(0, T ; \dot{\mathcal{N}}_{q^{*},\mu^{*}, \infty}^{s^{*}})}
+\|A v\|_{L^{\alpha^{*}, r^{*}}(0, T ; \dot{\mathcal{N}}_{q^{*},\mu^{*}, \infty}^{s^{*}})}).
\end{eqnarray}
with a constant $C$ independent of $0<T \leq \infty$.
Now from (\ref{eqmax327}) and (\ref{eqmax328}), we obtain the
desired estimate (\ref{eqmax317}). This proves Lemma \ref{lemmax33}.
\end{proof}

\section{ Proof of Theorems \ref{thm2}, \ref{thm3}, \ref{thm4}}

\subsection{ Proof of Theorem \ref{thm2}}
(i)\ \ In case $1 \leq q<\infty$:
We first prove existence of the solution $u$ of (\ref{eqmaxns}) on $(0, T^{*})$ for some
$0<T_{*} \leq T$.
Let $a \in \dot{\mathcal{N}}_{p,\mu, r}^{-1+\frac{n-\mu}{p}}$ and $f \in L^{\alpha, r}(0, T ; \dot{\mathcal{N}}_{q,\mu, \infty}^{s})$ for $2 / \alpha+(n-\mu) / q-s=3$ with $1<q<\infty,$
$1<\alpha<\infty$, $0\leq\mu<n$ and $-1<s$ satisfying $(\ref{eqmax102}).$ We define $u_{0}$ by
\begin{equation}
u_{0}(t)=e^{-t A} a+\int_{0}^{t} e^{-(t-\tau) A} \mathbb{P}f(\tau) d \tau \equiv u_{0}^{(1)}(t)+u_{0}^{(2)}(t), \quad 0<t<T.
\end{equation}

We shall solve (\ref{eqmaxns}) in the form $u=u_{0}+v$.
Then the solvability of (\ref{eqmaxns}) can be reduced to
construct the solution $v$ of the following equation
\begin{equation}\label{ns1}
\left\{\begin{array}{l}{\frac{d v}{d t}+A v=-\mathbb{P}\left(u_{0} \cdot \nabla v+v \cdot \nabla u_{0}+v \cdot \nabla v+u_{0} \cdot \nabla u_{0}\right) \quad \text { a.e. } t \in\left(0, T_{*}\right) \text { in } \dot{\mathcal{N}}_{q,\mu, \infty}^{s}} \\ {v(0)=0.}\end{array}\right.
\end{equation}

By assumption (\ref{eqmax102}) we have that
\begin{equation}
(n-\mu)/ q \leq (n-\mu) / p<2 / \alpha+(n-\mu) / q.
\end{equation}

Hence it follows from Proposition \ref{propmax201} and Theorem \ref{thm1} that
\begin{equation}\label{eqmax401}
\left\|\frac{d u_{0}^{(1)}}{d t}\right\|_{L^{\alpha, r}(0, \infty ; \dot{\mathcal{N}}_{q,\mu, 1}^{s})}
+\left\|A u_{0}^{(1)}\right\|_{L^{\alpha, r}(0, \infty ; \dot{\mathcal{N}}_{q,\mu, 1}^{s})}
\leq C\|a\|_{\dot{\mathcal{N}}_{p,\mu, r}^{-1+(n-\mu)/p}},
\end{equation}

\begin{equation}\label{eqmax402}
\left\|\frac{d u_{0}^{(2)}}{d t}\right\|_{L^{\alpha, r}(0, T ; \dot{\mathcal{N}}_{q,\mu, \infty}^{s})}
+\left\|A u_{0}^{(2)}\right\|_{L^{\alpha, r}(0, T ; \dot{\mathcal{N}}_{q,\mu, \infty}^{s})}
\leq C\|f\|_{L^{\alpha, r}(0, T ; \dot{\mathcal{N}}_{q,\mu, \infty}^{s})},
\end{equation}
where $C=C(n,\mu, p, \alpha, r, q)$ is a constant independent of $0<T \leq \infty$.
We solve (\ref{ns1}) by the following successive approximation
\begin{equation}\label{ns10}
\left\{\begin{array}{ll}{\frac{d v_{0}}{d t}+A v_{0}=-\mathbb{P}\left(u_{0} \cdot \nabla u_{0}\right)} & {\text { a.e. } t \in(0, T) \text { in } \dot{\mathcal{N}}_{q,\mu, \infty}^{s}} \\ {v_{0}(0)=0.}\end{array}\right.
\end{equation}
\begin{equation}\label{ns1j}
\left\{\begin{array}{l}{\frac{d v_{j+1}}{d t}+A v_{j+1}} \\ {=-\mathbb{P}\left(u_{0} \cdot \nabla v_{j}+v_{j} \cdot \nabla u_{0}+v_{j} \cdot \nabla v_{j}+u_{0} \cdot \nabla u_{0}\right) \quad \text { a.e. } t \in(0, T) \text { in } \dot{\mathcal{N}}_{q,\mu, \infty}^{s}} \\ {v_{j+1}(0)=0, \quad j=0,1, \cdots.}\end{array}\right.
\end{equation}
Set
\begin{equation*}
X_{T}=\left\{v : \mathbb{R}^{n} \times(0, T) \rightarrow \mathbb{R}^{n} ; v_{t}, A v \in L^{\alpha, r}(0, T ; \dot{\mathcal{N}}_{q,\mu, \infty}^{s}), v(0)=0\right\}
\end{equation*}
equipped with the norm
\begin{equation*}
\|v\|_{X_{T}} :=\left\|v_{t}\right\|_{L^{\alpha, r}(0, T ; \dot{\mathcal{N}}_{q,\mu, \infty}^{s})}+\|A v\|_{L^{\alpha, r}(0, T ; \dot{\mathcal{N}}_{q,\mu, \infty}^{s})}.
\end{equation*}
$X_{T}$ is a Banach space.
By (\ref{eqmax401}) and (\ref{eqmax402}), it holds that
$\left\|u_{0}\right\|_{X_{T}}<\infty .$ Since $a \in \dot{\mathcal{N}}_{p,\mu, r}^{-1+\frac{n-\mu}{p}},$ we
have by Lemma \ref{lemmax32} that
 \begin{equation*}
-\mathbb{P}\left(u_{0} \cdot \nabla u_{0}\right) \in L^{\alpha, r}(0, T ; \dot{\mathcal{N}}_{q, \mu, \infty}^{s})
\end{equation*}
with the estimate
\begin{equation*}
\left\|\mathbb{P}\left(u_{0} \cdot \nabla u_{0}\right)\right\|_{L^{\alpha, r}(0, T ; \dot{\mathcal{N}}_{q,\mu, \infty}^{s})}
\leq C\left(\left\|e^{-t A} a\right\|_{L^{\alpha_{0}, r}(0, T ; \dot{\mathcal{N}}_{q_{0},\mu, 1}^{s_{0}})}
+\left\|u_{0}\right\|_{X_{T}}\right)^{2},
\end{equation*}
where $C = C(n,\mu, q, \alpha,\alpha_{0}, q_{0},p, r)$ is a constant independent of $0 < T \leq\infty$.
Hence it follows from
Theorem \ref{thm1} that there is a unique solution $v_{0}$ of
(\ref{ns10}) in the class  $X_{T}$.

Assume that $v_{j} \in X_{T}$. Again by Lemma \ref{lemmax32}, we have that
\begin{equation*}
-\mathbb{P}\left(u_{0} \cdot \nabla v_{j}+v_{j} \cdot \nabla u_{0}+v_{j} \cdot \nabla v_{j}+u_{0} \cdot \nabla u_{0}\right) \in L^{\alpha, r}(0, T ; \dot{\mathcal{N}}_{q,\mu, \infty}^{s})
\end{equation*}
with the estimate
\begin{eqnarray}\label{eqmax404}
&&\left\|\mathbb{P}\left(u_{0} \cdot \nabla v_{j}+v_{j} \cdot \nabla u_{0}+v_{j} \cdot \nabla v_{j}+u_{0} \cdot \nabla u_{0}\right)\right\|_{L^{\alpha, r}(0, T ; \dot{\mathcal{N}}_{q,\mu, \infty}^{s})}\nonumber\\
&&\leq C\bigg(2(\|e^{-t A} a\|_{L^{\alpha_{0}, r}(0, T ; \dot{\mathcal{N}}_{q_{0}, \mu, 1}^{s_{0}})}
+\|u_{0}\|_{X_{T}})\|v_{j}\|_{X_{T}}\nonumber\\
&&+\|v_{j}\|_{X_{T}}^{2}+
(\|e^{-t A} a\|_{L^{\alpha_{0}, r}(0, T ; \dot{\mathcal{N}}_{q_{0}, \mu, 1}^{s_{0}})}+\|u_{0}\|_{X_{T}})^{2}\bigg)
\end{eqnarray}
for some $\alpha_{0}>\alpha, q_{0}\geq q$ and $s_{0}>s$ such that
$2/\alpha_{0}+(n-\mu)/q_{0}-s_{0}=1,$
where $C=C\left(n,\mu, q, \alpha, q_{0}, \alpha_{0}, p, r \right)$ is a constant independent of $0<T \leq \infty$.
Then it follows from Theorem \ref{thm1}
 that there exists a unique solution $v_{j+1}$ of
 (\ref{ns1j}) in $X_{T}$ with the estimate
\begin{eqnarray}\label{eqmax404}
\left\|v_{j+1}\right\|_{X_{T}}
&\leq& C\bigg(
2(\|e^{-t A} a\|_{L^{\alpha_{0}, r}(0, T ; \dot{\mathcal{N}}_{q_{0},\mu, 1}^{s_{0}})}+\|u_{0}\|_{X_{T}})\|v_{j}\|_{X_{T}}\nonumber\\
&+&\|v_{j}\|_{X_{T}}^{2}
+(\|e^{-t A} a\|_{L^{\alpha_{0}, r}(0, T ; \dot{\mathcal{N}}_{q_{0},\mu, 1}^{s_{0}})}+\|u_{0}\|_{X_{T}})^{2}
\bigg)
\end{eqnarray}
where $C=C\left(n,\mu, q, \alpha, q_{0}, \alpha_{0}, p, r \right)$ is a constant independent of $0<T \leq \infty$.

By induction, we have that
$v_{j} \in X_{T}$ for all $j=1,2, \cdots$.
Hence, if there is
$0<T_{*} \leq T$
such that
\begin{equation}\label{eqmax405}
\left\|e^{-t A} a\right\|_{L^{\alpha_{0}, r}(0, T_{*} ; \dot{\mathcal{N}}_{q_{0},\mu, 1}^{s_{0}})}+\left\|u_{0}\right\|_{X_{T_{*}}}<\frac{1}{4 C},
\end{equation}
then we obtain from (\ref{eqmax404}) that
\begin{eqnarray}\label{eqmax406}
&&\left\|v_{j}\right\|_{X_{T_{*}}}\nonumber\\
&&\leq
\frac{1}{2 C}\bigg(1-2 C(\|e^{-t A} a\|_{L^{\alpha_{0}, r}(0, T_{*} ; \dot{\mathcal{N}}_{q_{0}, \mu,1}^{s_{0}})}+\|u_{0}\|_{X_{T_{*}}})\nonumber\\
&&-\sqrt{1-4 C(\|e^{-t A} a\|_{L^{\alpha_{0}, r}(0, T_{*} ; \dot{\mathcal{N}}_{q_{0},\mu, 1}^{s_{0}})}+\|u_{0}\| _{X_{T_{*}}})}\bigg)
\equiv K,\ \ for \ \ all \ \ j=1,2,\cdots.
\end{eqnarray}
It should be noted that all constants $C$ appearing in
(\ref{eqmax404}), (\ref{eqmax405}) and (\ref{eqmax406}) are the same and independent of $0< T\leq \infty.$

Defining $w_{j} \equiv v_{j}-v_{j-1}\left(v_{-1}=0\right)$,
we obtain from $(\ref{ns1j})$ that
\begin{equation*}
\left\{\begin{array}{l}{\frac{d w_{j+1}}{d t}+A w_{j+1}=-\mathbb{P}\left(u_{0} \cdot \nabla w_{j}+w_{j} \cdot \nabla u_{0}+v_{j} \cdot \nabla w_{j}+w_{j} \cdot \nabla v_{j-1}\right),}
 \\ {w_{j+1}(0)=0 ,\quad j=0,1, \cdots.}\end{array}\right.
\end{equation*}
Similarly to (\ref{eqmax404}), we have by (\ref{eqmax406}) that
\begin{eqnarray}\label{eqmax407}
&&\left\|w_{j+1}\right\|_{X_{T_{*}}}\nonumber\\
&&\leq C\bigg(2(\|e^{-t A} a\|_{L^{\alpha_{0}, r}(0, T_{*} ; \dot{\mathcal{N}}_{q_{0},\mu, 1}^{s _{0}})}+\|u_{0}\|_{X_{T_{*}}})\|w_{j}\|_{X_{T_{*}}}\nonumber\\
&&+\|v_{j}\|_{X_{T_{*}}}\|w_{j}\|_{X_{T_{*}}}
+\|v_{j-1}\|_{X_{T_{*}}}\|w_{j}\|_{X_{T_{*}}}\bigg)\nonumber\\
&&\leq C\bigg(2(\|e^{-t A} a\|_{L^{\alpha_{0}, r}(0, T_{*} ; \dot{\mathcal{N}}_{q_{0}, \mu, 1}^{s_{0}})}+\|u_{0}\|_{X_{T_{*}}})+2K\bigg)\|w_{j}\|_{X_{T_{*}}}, j=1,2,\cdots,
\end{eqnarray}
which yields that
\begin{equation*}
\left\|w_{j}\right\|_{X_{T_{*}}}
\leq
\left\{2C(\left\|e^{-t A} a\right\|_{L^{\alpha_{0}, r}\left(0, T_{*} ; \dot{\mathcal{N}}_{q_{0},\mu, 1}^{s_{0}}\right)}
+\left\|u_{0}\right\|_{X_{T_{*}}}
+K)\right\}^{j}\left\|v_{0}\right\|_{X_{T_{*}}},j=1,2,\cdots.
\end{equation*}
By (\ref{eqmax406}), it holds that
\begin{eqnarray*}
&&2 C\left(\left\|e^{-t A} a\right\|_{L^{\alpha_{0}, r}(0, T_{*}; \dot{\mathcal{N}}_{q_{0},\mu, 1}^{s_{0}})}+\left\|u_{0}\right\|_{X_{T_{*}}}+K\right)\\
&&=1-\sqrt{1-4 C(\|e^{-t A} a\|_{L^{\alpha_{0}, r}(0, T_{*} ; \dot{\mathcal{N}}_{q_{0}, \mu, 1}^{s_{0}})}+\|u_{0}\|_{X_{T_{*}}})}<1,
\end{eqnarray*}
from which it follows that
\begin{equation*}
\sum_{j=1}^{\infty}\left\|w_{j}\right\|_{X_{T_{*}}}<\infty.
\end{equation*}
This implies that there exists a limit
$v \in X_{T_{*}}$  of $\{v_{j}\}_{j=1}^{\infty}$  in $ X_{T_{*}}.$
Now letting $j \rightarrow \infty$ in both sides of $(\ref{ns1j})$,
we see easily from Lemma \ref{lemmax32} that $v$ is a solution of $(\ref{ns1})$ on $(0, T_{*})$,
provided the hypothesis (\ref{eqmax405}) is fulfilled.
Concerning the validity of the condition  (\ref{eqmax405}), since
$1 \leq r<\infty,$  we see from (\ref{eqmax302}), (\ref{eqmax401}) and (\ref{eqmax402}) that there exists $0 < T_{*}\leq T$ such that the estimate (\ref{eqmax405}) holds. Hence we have proved the
existence of the solution $u$ of (\ref{eqmaxns}) on $(0, T_{*})$.

Next, we shall show (\ref{eqmax105}). Indeed, for $\alpha<\alpha_{0}, q \leq q_{0}$
and $\max \{s, (n-\mu) / p-1\}<s_{0}$ satisfying
$2 / \alpha_{0}+(n-\mu) / q_{0}-s_{0}=1,$ it holds that
\begin{equation*}
\begin{array}{l}{(n-\mu) / q \leq (n-\mu) / p<s_{0}+1=2 / \alpha_{0}+(n-\mu) / q_{0}}, \\ {k=2+(n-\mu) / p-(2 / \alpha+(n-\mu) / q-s)=-1+(n-\mu) / p}.\end{array}
\end{equation*}
Hence, from Lemma \ref{lemmax31} we obtain (\ref{eqmax105}).

Now, it remains to prove the uniqueness.
Let $u_{1}$ and $u_{2}$ be two solutions of $(\ref{eqmaxns})$ on $\left(0, T_{*}\right)$
in the class $(\ref{eqmax107}).$ Defining $w=u_{1}-u_{2},$ we have
\begin{equation}\label{eqmax408}
\left\{\begin{array}{l}{\frac{d w}{d t}+A w=-\mathbb{P}\left(u_{1} \cdot \nabla w+w \cdot \nabla u_{2}\right) \quad \text { a.e. } t \in\left(0, T_{*}\right) \text { in } \dot{\mathcal{N}}_{q,\mu, \infty}^{s}} \\ {w(0)=0.}\end{array}\right.
\end{equation}
It holds by Lemma \ref{lemmax32} that

\begin{eqnarray}\label{eqmax409}
&&\left\|\mathbb{P}\left(u_{1} \cdot \nabla w+w \cdot \nabla u_{2}\right)\right\|_{L^{\alpha, r}(0, T_{*} ; \dot{\mathcal{N}}_{q,\mu, \infty}^{s})}\nonumber\\
&&\leq C(\|e^{-t A} a\|_{L^{\alpha_{0}, r}(0, T_{*}; \dot{\mathcal{N}}_{q_{0},\mu_{0}, 1}^{s_{0}})}+\|u_{1}\|_{X_{T_{*}}}+\|u_{2}\|_{X_{T_{*}}})\|w\|_{ X_{T_{*}}}.
\end{eqnarray}
Then it follows from (\ref{eqmax408}), (\ref{eqmax409}) and Theorem \ref{thm1} that
\begin{equation}\label{eqmax410}
\|w\|_{X_{T_{*}}}
\leq C\left(\left\|e^{-t A} a\right\|_{L^{\alpha_{0}, r}(0, T_{*}; \dot{\mathcal{N}}_{q_{0},\mu, 1}^{s_{0}})}+\|u_{1}\|_{X_{T_{*}}}+\|u_{2}\|_{X_{T_{*}}}\right)\|w\|_{X_{T_{*}}},
\end{equation}
where $ C = C(n,\mu,  q, \alpha, q_{0}, \alpha_{0}, p, r)$ is a constant independent of $T_{*}$.
Since $1 \leq r<\infty,$  there  is some $\sigma>0$ such that
\begin{equation}\label{eqmax411}
\left\|e^{-t A} a\right\|_{L^{\alpha_{0}, r}(\tau, \tau+\sigma ; \dot{\mathcal{N}}_{q_{0},\mu, 1}^{s_{0}})}<\frac{1}{4 C},
\end{equation}
\begin{equation}\label{eqmax412}
\left\|\frac{d u_{i}}{d t}\right\|_{L^{\alpha, r}(\tau, \tau+\sigma ; \dot{\mathcal{N}}_{q,\mu, \infty}^{s})}+\left\|A u_{i}\right\|_{L^{\alpha, r}(\tau, \tau+\sigma ; \dot{\mathcal{N}}_{q,\mu, \infty}^{s})}<\frac{1}{4 C},\quad i=1 ,2
\end{equation}
hold for all $\tau \in\left(0, T_{*}\right)$, where $C$ is the same constant as in (\ref{eqmax410}).
Then we have by (\ref{eqmax410}), (\ref{eqmax411}) that $w(t)=0$ for $t \in[0, \sigma].$
Since $\sigma$ depends only on the constant $C$ in (\ref{eqmax410}), we
may repeat the same argument on the interval $[\sigma, 2 \sigma]$ to conclude that $w(t)=0$ for $t \in[0,2 \sigma].$
Proceeding further for $t \geq 2 \sigma$ successively, within finitely many steps, we deduce that $w(t)=0$ for all $t \in[0, T_{*}),$ which implies the desired uniqueness.

(ii)\ \ In case $q=\infty$.
Let $a \in \dot{\mathcal{N}}_{p,\mu, \infty}^{-1+\frac{(n-\mu)}{p}}$
and $f \in L^{\alpha, \infty}(0, T ; \dot{\mathcal{N}}_{q,\mu, \infty}^{s})$ for $2 / \alpha+(n-\mu) / q-s=3$ with $n-\mu<p \leq q<\infty, 1<\alpha<\infty$ and $-1<s$ satisfying $(\ref{eqmax102}).$
Although the estimates (\ref{eqmax302}), (\ref{eqmax401})
and (\ref{eqmax402}) hold even for $q=\infty,$ we need such an additional assumption as $(\ref{eqmax106})$ to deduce $(\ref{eqmax405})$. Indeed, by $(\ref{eqmax106})$ it holds that
\begin{eqnarray}
a=a_{0}+a_{1}, \ \ \text { for  } \ \ a_{0} \in \mathcal{M}_{p,\mu} \text { and } a_{1} \in \dot{\mathcal{N}}_{p,\mu, \infty}^{-1+\frac{(n-\mu)}{p}} \text { with }\left\|a_{1}\right\|_{\dot{\mathcal{N}}_{p,\mu, \infty}^{-1+\frac{(n-\mu)}{p}}}<C \eta,\label{eqmax413}\\
\|f(t)\|_{\dot{\mathcal{N}}_{q,\mu, \infty}^{s}}=g_{0}(t)+g_{1}(t), \quad 0<t<T \nonumber\\
\text { for  }
g_{0} \in L^{\infty}(0, T) \text { and } g_{1} \in L^{\alpha, \infty}(0, T) \text { with }\left\|g_{1}\right\|_{L^{\alpha, \infty}(0, T)}<C \eta,\label{eqmax414}
\end{eqnarray}
where $C=C(n,\mu, q, \alpha)$ is independent of $T$.
For a moment, let us assume (\ref{eqmax413}) and (\ref{eqmax414}).
 First, by (\ref{eqmax307}), notice that we may take
$s_{0}>0$ in $(\ref{eqmax405})$ because $s>-1 .$ since $n-\mu<p \leq q \leq q_{0}$ and
since $2 / \alpha_{0}+(n-\mu) / q_{0}-s_{0}=1,$ we have
\begin{equation*}
\|e^{-t A} a_{0}\|_{\dot{\mathcal{N}}_{q_{0},\mu, 1}^{s_{0}}}
\leq C t^{-\frac{1}{2}(\frac{(n-\mu)}{p}-\frac{(n-\mu)}{q_{0}})-\frac{s_{0}}{2}}
\|a_{0}\|_{\dot{\mathcal{N}}_{p,\mu, \infty}^{0}} \leq C t^{-\frac{1}{\alpha_{0}}+\frac{1}{2}(1-\frac{(n-\mu)}{p})}
\|a_{0}\|_{\mathcal{M}_{p,\mu}},
\end{equation*}
which yields that
\begin{equation}\label{eqmax415}
\left\|e^{-t A} a_{0}\right\|_{L^{\alpha_{0}, \infty}(0, T ; \dot{\mathcal{N}}_{q_{0},\mu, 1}^{s_{0}})}
 \leq C\left(\int_{0}^{T}\left\|e^{-t A} a_{0}\right\|_{\dot{\mathcal{N}}_{q_{0},\mu, 1}^{s_{0}}}^{\alpha_{0}} d t\right)^{\frac{1}{\alpha_{0}}}
  \leq C\left\|a_{0}\right\|_{\mathcal{M}_{p,\mu}}T^{\frac{1}{2}(1-\frac{(n-\mu)}{p})}
\end{equation}
with $C=C\left(n,\mu, q_{0}, \alpha_{0}, p\right)$ independent of $T.$
Similarly, since
 $2 / \alpha+(n-\mu) / q-s=3,$ we have
\begin{eqnarray*}
\left\|A e^{-t A} a_{0}\right\|_{\dot{\mathcal{N}}_{q,\mu, 1}^{s}}
\leq\left\|e^{-t A} a_{0}\right\|_{\dot{\mathcal{N}}_{q,\mu, 1}^{s+2}}
&\leq& C t^{-\frac{1}{2}(\frac{(n-\mu)}{p}-\frac{(n-\mu)}{q})-\frac{s+2}{2}}
\left\|a_{0}\right\|_{\dot{\mathcal{N}}_{p,\mu, \infty}^{0}} \\
&\leq& C t^{-\frac{1}{\alpha}+\frac{1}{2}(1-\frac{(n-\mu)}{p})}
\left\|a_{0}\right\|_{\mathcal{M}_{p,\mu}},
\end{eqnarray*}

which yields that
\begin{equation}\label{eqmax416}
\left\|A e^{-t A} a_{0}\right\|_{L^{\alpha, \infty}(0, T ; \dot{\mathcal{N}}_{q,\mu, 1}^{s})} \leq C \bigg(\int_{0}^{T}\left\|A e^{-t A} a_{0}\right\|_{\dot{\mathcal{N}}_{q,\mu, 1}^{s}}^{\alpha} d t \bigg)^{\frac{1}{\alpha}} \leq C\left\|a_{0}\right\|_{\mathcal{M}_{p,\mu}} T^{\frac{1}{2}(1-\frac{(n-\mu)}{p})}
\end{equation}
with $C=C(n,\mu, q, \alpha, p)$ independent of $T.$
By (\ref{eqmax413}), (\ref{eqmax302}) and Proposition \ref{propmax201}, it holds that
\begin{equation}
\left\|e^{-t A} a_{1}\right\|_{L^{\alpha_{0}, \infty}(0, T ; \dot{\mathcal{N}}_{q_{0},\mu, 1}^{s_{0}})}+\left\|A e^{-t A} a_{1}\right\|_{L^{\alpha, \infty}(0, T ; \dot{\mathcal{N}}_{q,\mu, 1}^{s})} \leq C \eta
\end{equation}
with $C=C(n,\mu, q, \alpha, q_{0}, \alpha_{0} )$ independent of $T$.
Similarly to $(\ref{eqmax402}),$ we obtain from $(\ref{eqmax414})$ that
\begin{equation}\label{eqmax418}
\left\|u_{0}^{(2)}\right\|_{X_{T}} \leq C\left\|g_{0}+g_{1}\right\|_{L^{\alpha, \infty}(0, T)} \leq C\left(\left\|g_{0}\right\|_{L^{\infty}(0, T)} T^{\frac{1}{\alpha}}+\eta\right)
\end{equation}
with $C=C(n,\mu, q, \alpha)$ independent of $T$,
where $u_{0}^{(2)}(t)=\int_{0}^{t} e^{-(t-\tau) A} \mathbb{P}f(\tau) d \tau.$
It follows from (\ref{eqmax415})-(\ref{eqmax418}) that
\begin{equation}\label{eqmax419}
\left\|e^{-t A} a\right\|_{L^{\alpha_{0}, \infty}(0, T ; \dot{\mathcal{N}}_{q_{0}, \mu, 1}^{s_{0}})}+\left\|u_{0}\right\|_{X_{T}} \leq C\left(\left\|a_{0}\right\|_{\mathcal{M}_{p,\mu}} T^{\frac{1}{2}\left(1-\frac{(n-\mu)}{p}\right)}+\left\|g_{0}\right\|_{L^{\infty}(0, T)} T^{\frac{1}{\alpha}}+\eta\right),
\end{equation}
where $C=C(n,\mu, q, \alpha, q_{0}, \alpha_{0}, p )$ is independent of $T$.
Hence, taking $\eta$ and $T_{*}$ sufficiently small,
we obtain from the above estimate that
\begin{equation}
\left\|e^{-t A} a\right\|_{L^{\alpha_{0}, \infty}(0, T_{*} ; \dot{\mathcal{N}}_{q_{0},\mu, 1}^{s_{0}})}+\|u_{0}\|_{X_{T_{*}}}<\frac{1}{4 C},
\end{equation}
which implies that the condition (\ref{eqmax405}) is fulfilled.

Now, we are in a position to show the decompositions (\ref{eqmax413}) and (\ref{eqmax414}). We define $a_{0}$ and
$a_{1}$ by
\begin{equation}
a_{0} \equiv \sum_{j=-\infty}^{N} \varphi_{j} * a \quad \text { and } \quad a_{1} \equiv \sum_{j=N+1}^{\infty} \varphi_{j} * a
\end{equation}
respectively. Indeed, since $1-(n-\mu) / p>0,$ implied by $(n-\mu)<p,$ it holds that
\begin{eqnarray}\label{eqmax420}
\left\|a_{0}\right\|_{\mathcal{M}_{p,\mu}}
&\leq& \sum_{j=-\infty}^{N}\left\|\varphi_{j} * a\right\|_{\mathcal{M}_{p,\mu}}\nonumber\\
&\leq&\left(\sup _{-\infty<j \leq N} 2^{\left(-1+\frac{(n-\mu)}{p}\right) j}\left\|\varphi_{j} * a\right\|_{\mathcal{M}_{p,\mu}}\right) \sum_{j=-\infty}^{N} 2^{\left(1-\frac{(n-\mu)}{p}\right) j}\nonumber\\
&\leq& C\|a\|_{\dot{\mathcal{N}}_{p,\mu, \infty}^{-1+\frac{(n-\mu)}{p}}}.
\end{eqnarray}
By (\ref{eqmax106}) we have that
\begin{eqnarray}\label{eqmax421}
\left\|a_{1}\right\|_{\dot{\mathcal{N}}_{p, \mu, \infty}^{-1+\frac{(n-\mu)}{p}}}
&=&\sup _{j \in \mathbb{Z}} 2^{(-1+\frac{n-\mu}{p}) j}\|\varphi_{j} * a_{1}\|_{\mathcal{M}_{p,\mu}}\nonumber\\
&=&\sup _{j \geq N} 2^{(-1+\frac{(n-\mu)}{p}) j}\|\varphi_{j} * \sum_{l=N+1}^{\infty} \varphi_{l} * a\|_{\mathcal{M}_{p,\mu}}\nonumber\\
&=&\sup _{j \geq N} 2^{(-1+\frac{(n-\mu)}{p}) j}\|\varphi_{j} *(\varphi_{j-1}+\varphi_{j}+\varphi_{j+1}) * a\|_{\mathcal{M}_{p,\mu}}\nonumber\\
&\leq& C \quad \underset{j \geq N}{\sup } 2^{(-1+\frac{(n-\mu)}{p}) j}\|\varphi_{j} * a\|_{\mathcal{M}_{p,\mu}}
\leq C \eta.
\end{eqnarray}
Hence from (\ref{eqmax420}) and (\ref{eqmax421}) we obtain (\ref{eqmax413}).

We next show  (\ref{eqmax414}). By  (\ref{eqmax106}), there is $ R > 0 $ such that
\begin{equation} \label{eqmax422}
\sup _{t \geq R} t \left|\left\{\tau \in(0, T) ;\|f(\tau)\|_{\dot{\mathcal{N}}_{q,\mu, \infty}^{s}}>t\right\}\right|^{\frac{1}{\alpha}}<2 \eta.
\end{equation}

Then we may define $g_{0}$ and $g_{1}$ by
\begin{equation}
g_{0}(t)=\left\{\begin{array}{ll}{\|f(t)\|_{\dot{\mathcal{N}}_{q,\mu, \infty}^{s}}} & {\text { for } t \in(0, T) \text { such that }\|f(t)\|_{\dot{\mathcal{N}}_{q,\mu, \infty}^{s}} \leq R} \\ {0} & {\text { for } t \in(0, T) \text { such that }\|f(t)\|_{\dot{\mathcal{N}}_{q,\mu, \infty}^{s}}>R}\end{array}\right.
\end{equation}
and $g_{1}(t)=\|f(t)\|_{\dot{\mathcal{N}}_{q,\mu,  \infty}^{q}}-g_{0}(t),$ respectively.
Obviously, we see that $g_{0} \in L^{\infty}(0, T).$  Since
$g_{1}(t)=0$ for all $t \in(0, T)$ such that $\|f(t)\|_{\dot{\mathcal{N}}_{q,\mu, \infty}^{s}} \leq R,$
it follows from (\ref{eqmax422}) that

\begin{eqnarray*}
\left\|g_{1}\right\|_{L^{\alpha, \infty}(0, T)}
&=&\sup _{t>0} t \left|\left\{\tau \in(0, T) ; g_{1}(\tau)>t\right\}\right|^{\frac{1}{\alpha}}\\
&=&\max \left\{\sup _{0<t \leq R} t \left|\left\{\tau \in(0, T) ; g_{1}(\tau)>t\right\}\right|^{\frac{1}{\alpha}}, \quad \sup _{R<t<\infty} t \left|\left\{\tau \in(0, T) ; g_{1}(\tau)>t\right\}\right|^{\frac{1}{\alpha}}\right\}\\
&=&\max \left\{R \left|\left\{\tau \in(0, T) ; g_{1}(\tau)>R\right\}\right|^{\frac{1}{\alpha}}, \quad \sup _{R<t<\infty} t \left|\left\{\tau \in(0, T) ; g_{1}(\tau)>t\right\}\right|^{\frac{1}{\alpha}}\right\}\\
&=&\sup _{R \leq t<\infty} t \left|\left\{\tau \in(0, T) ; g_{1}(\tau)>t\right\}\right|^{\frac{1}{\alpha}}
\leq 2 \eta
\end{eqnarray*}
which yields (\ref{eqmax414}).
Concerning validity of  (\ref{eqmax108}),
 the proof is parallel to that of (\ref{eqmax105}) as in the above case (i). So,
we may omit it.

Now it remains to prove uniqueness of solutions in the class (\ref{eqmax107}) under the condition (\ref{eqmax109}).
Suppose that $u_{1}$ and $u_{2}$ are two solutions of (\ref{eqmaxns}) on $(0, T_{*})$ in the class (\ref{eqmax107}) satisfying
\begin{eqnarray*}
&&\sup _{N \leq j<\infty} 2^{(-1+(n-\mu) / p) j}\left\|\varphi_{j} * a\right\|_{\mathcal{M}_{p,\mu}}\\
&&+\limsup _{t \rightarrow \infty} t \left|\left\{\tau \in\left(0, T_{*}\right) ; F_{i}(t) \equiv\left\|u_{i, t}(t)\right\|_{\dot{\mathcal{N}}_{q, \mu, \infty}^{s}}+\left\|A u_{i}(t)\right\|_{\dot{\mathcal{N}}_{q,\mu, \infty}^{s}}>t\right\}\right|^{\frac{1}{\alpha}}
 \leq \kappa
\end{eqnarray*}
for $i = 1, 2.$ In the same way as in (\ref{eqmax419}), we see that
\begin{eqnarray}\label{eqmax424}
&&\|e^{-t A} a\|_{L^{\alpha_{0}, \infty}(0, \tau ; \dot{\mathcal{N}}_{q_{0},\mu_{0}, \infty}^{s_{0}})}
+\|u_{i}\|_{X_{\tau}}\nonumber\\
 &&\leq C\left(\|a_{0}\|_{\mathcal{M}_{p,\mu}}\tau^{\frac{1}{2}(1-\frac{(n-\mu)}{p})}
 +\|g_{0}^{(i)}\|_{L^{\infty}(0, T_{*})} \tau^{\frac{1}{\alpha}}+\kappa\right),i=1,2
\end{eqnarray}
for all $\tau \in\left(0, T_{*}\right)$ with $C=C(n,\mu, q, \alpha)$ independent of $\tau,$ where $a_{0} \in \mathcal{M}_{p,\mu}$ is the same as in (\ref{eqmax413})
and where $g_{0}^{(i)} \in L^{\infty}(0, T_{*})$ is defined as in (\ref{eqmax414}) with $\|f(t)\|_{\dot{\mathcal{N}}_{q,\mu, \infty}^{s}}$ replaced by $F_{i}(t)$ for $i=1,2$.
Now we take
\begin{equation}
\kappa=\frac{1}{4 C}, \quad \tau=\frac{1}{\left(4 C\left(\left\|a_{0}\right\|_{\mathcal{M}_{p,\mu}}+\sum_{i=1}^{2}\left\|g_{0}^{(i)}\right\|_{L^{\infty}\left(0, T_{*}\right)}\right)\right)^{\frac{1}{\gamma}}}
\end{equation}
with $\gamma=\min \left\{\frac{1}{2}\left(1-\frac{(n-\mu)}{p}\right), \frac{1}{\alpha}\right\}$.
Then it follows from (\ref{eqmax410}) and (\ref{eqmax424}) that
\begin{equation}
\|w\|_{X_{\tau}} \leq \frac{1}{2}\|w\|_{X_{\tau}}
\end{equation}
which yields that $w(t) \equiv u_{1}(t)-u_{2}(t)=0$ for $t \in[0, \tau]$.
Repeating this argument on the interval $[\tau, 2 \tau],$ we have that $w(t)=0$ on $[0,2 \tau].$
Proceeding similarly beyond $t \geq 2 \tau$ within finitely many steps, we conclude that
$w(t)=0$ on $\left[0, T_{*}\right),$ which yields the desired uniqueness.
This
proves Theorem \ref{thm2}.

\subsection{Proof of Theorem \ref{thm3}}

Let $a \in \dot{\mathcal{N}}_{p,\mu, r}^{-1+\frac{(n-\mu)}{p}}$ and $f \in L^{\alpha, r}(0, \infty ; \dot{\mathcal{N}}_{q,\mu, \infty}^{s})$ for $2 / \alpha+(n-\mu) / q-s=3$ with
$1<q<\infty, 1<\alpha<\infty, 0\leq \mu<n$ and $-1<s$ satisfying
(\ref{eqmax102}). For such data $a$ and $f$ with the smallness condition as in (\ref{eqmax111}), we may construct the solution $v$ of (\ref{ns}) on $(0, \infty) .$ More precisely, in the same way as in Subsection $4.1,$ let us define the Banach space $X_{\infty}$ by
\begin{equation*}
X_{\infty} \equiv\left\{v : \mathbb{R}^{n} \times(0, \infty) \rightarrow \mathbb{R}^{n} ; v_{t}, A v \in L^{\alpha, r}(0, \infty ; \dot{\mathcal{N}}_{q,\mu, \infty}^{s}), v(0)=0\right\}
\end{equation*}
with the norm

\begin{equation*}
\|v\|_{X_{\infty}}=\left\|v_{t}\right\|_{L^{\alpha, r}(0, \infty ; \dot{\mathcal{N}}_{q,\mu, \infty}^{s})}+\|A v\|_{L^{\alpha, r}(0, \infty ; \dot{\mathcal{N}}_{q,\mu, \infty}^{s})}.
\end{equation*}
We need to find a solution $v$ of (\ref{ns1}) with $T_{*}=\infty$ in the class $X_{\infty}$ provided $a$ and $f$ satisfy
the condition (\ref{eqmax111}). Since Theorem \ref{thm1} and Lemmas  \ref{lemmax31} and \ref{lemmax32} hold even in the spaces
$L^{\alpha, r}(0, \infty ; \dot{\mathcal{N}}_{q,\mu, \infty}^{s})$ and $L^{\alpha_{0}, r}(0, \infty ; \dot{\mathcal{N}}_{q_{0}, \mu,\infty}^{s_{0}})$ and since all constants $C$ appearing in the estimates (\ref{eqmax103}), (\ref{eqmax301}), (\ref{eqmax302}) and (\ref{eqmax306}) can be chosen independently of $T,$ we see from (\ref{eqmax405}) that the solution $v$ of (\ref{ns1})  on $(0, \infty)$ can be obtained provided the condition

\begin{equation}\label{eqmax426}
\|e^{-t A} a\|_{L^{s_{0}, q}(0, \infty ; \dot{\mathcal{N}}_{q_{0}, \mu, 1}^{s_{0}})}+\|u_{0}\|_{X_{\infty}}<\frac{1}{4 C}
\end{equation}
is fulfilled with the same constant $C$ as in (\ref{eqmax405}). Since the estimates (\ref{eqmax401}) and (\ref{eqmax402}) hold even for $T=\infty$ with the constant $C=C(n,\mu, q, \alpha, p, r)$ independently of $T,$ by taking $\varepsilon_{*}$ in (\ref{eqmax111}) sufficiently small, we have that

\begin{equation}\label{eqmax427}
\left\|u_{0}\right\|_{X_{\infty}}<\frac{1}{8 C}.
\end{equation}
Obviously by (\ref{eqmax302}) with $k=-1+(n-\mu) / p,$ choice of small $\varepsilon_{*}$ in (\ref{eqmax111}) enables us to ensure that
\begin{equation}\label{eqmax428}
\left\|e^{-t A} a\right\|_{L^{s_{0}, r}(0, \infty ; \dot{\mathcal{N}}_{q_{0},\mu, 1}^{s_{0}})}<\frac{1}{8 C}.
\end{equation}
Now, from (\ref{eqmax427}) and (\ref{eqmax428}), we see that the condition (\ref{eqmax426}) can be achieved under the hypothesis
of (\ref{eqmax111}), which yields the solution $u$ of (\ref{ns}) on $(0, \infty)$ in the class (\ref{eqmax112}).
The additional property of $u$ as in (\ref{eqmax113}) and uniqueness assertion are both immediate
consequences of Theorem \ref{thm2}, and so we may omit their proofs. This proves Theorem \ref{thm3}.

\subsection{Proof of Theorem \ref{thm4}}
Since $a \in \dot{\mathcal{N}}_{p^{*},\mu^{*}, r^{*}}^{k^{*}}$ for $k^{*}=2+(n-\mu^{*}) / p^{*}-(2 / \alpha^{*}+(n-\mu^{*}) / q^{*}-s^{*})$ with $(n-\mu^{*}) / q^{*} \leq (n-\mu^{*}) / p^{*}<2 / \alpha^{*}+(n-\mu^{*}) / q^{*}$
and $f \in L^{\alpha^{*}, r^{*}}(0, \infty ; \dot{\mathcal{N}}_{q^{*},\mu^{*}, \infty}^{s^{*}})$,
it follows from Theorem \ref{thm1} that
\begin{equation}\label{eqmax437}
\frac{d u_{0}}{d t}, A u_{0} \in L^{\alpha^{*}, r^{*}}(0, \infty ; \dot{\mathcal{N}}_{q^{*}, \mu^{*}, \infty}^{s^{*}}).
\end{equation}
Hence it suffices to show that the solution $v$ of $(\ref{ns1})$ satisfies that

\begin{equation}\label{eqmax438}
\frac{d v}{d t}, A v \in L^{\alpha^{*}, r^{*}}(0, \infty ; \dot{\mathcal{N}}_{q^{*}, \mu^{*}, \infty}^{s^{*}}).
\end{equation}
For that purpose, we need to return to the approximating solutions $\left\{v_{j}\right\}_{j=0}^{\infty}$ of (\ref{ns1j}).
Let us take $\alpha_{0}, q_{0}, \mu$ and $s_{0}$ so that $\alpha<\alpha_{0}, q \leq q_{0}, 0\leq \mu^{*}\leq\mu<n,$
$\max \{s+1, (n-\mu) / p-1\}<s_{0}$ and $2 / \alpha_{0}+(n-\mu) / q_{0}-s_{0}=1.$
Since $v_{j} \in X_{\infty},$ in the same way as in (\ref{eqmax113}) we obtain from (\ref{eqmax406}) and Lemma \ref{lemmax31} that
$v_{j} \in L^{\alpha_{0}, r}(0, \infty ; \dot{\mathcal{N}}_{q_{0},\mu, \infty}^{s_{0}})$
with the estimate
\begin{equation}\label{eqmax439}
\left\|v_{j}\right\|_{L^{\alpha_{0}, r}(0, \infty ; \dot{\mathcal{N}}_{q_{0},\mu, \infty}^{s_{0}})} \leq C\left\|v_{j}\right\|_{X_{\infty}} \leq C K
\end{equation}
for all $j=1,2, \cdots$, where $C=C\left(n,\mu, q, \alpha, q_{0}, \alpha_{0}, p, r\right).$ Set

\begin{equation*}
Y \equiv\left\{v : \mathbb{R}^{n} \times(0, \infty) \rightarrow \mathbb{R}^{n} ; v_{t}, A v \in L^{\alpha^{*}, r^{*}}(0, \infty ; \dot{\mathcal{N}}_{q^{*},\mu^{*}, \infty}^{s^{*}}), v(0)=0\right\}
\end{equation*}
with the norm $\|\cdot\|_{Y}$ defined by
\begin{equation*}
\|v\|_{Y} \equiv\left\|v_{t}\right\|_{L^{\alpha^{*}, r^{*}}(0, \infty ; \dot{\mathcal{N}}_{q^{*}, \mu^{*}, \infty}^{s^{*}})}+\|A v\|_{L^{\alpha^{*}, r^{*}}(0, \infty ; \dot{\mathcal{N}}_{q^{*},\mu^{*}, \infty}^{s^{*}})}
\end{equation*}

Since $a \in \dot{\mathcal{N}}_{p^{*},\mu^{*}, r^{*}}^{k^{*}}$,
 we have by (\ref{eqmax437}) and Lemma \ref{lemmax33} that
$-P(u_{0} \cdot \nabla u_{0}) \in L^{\alpha^{*}, r^{*}}(0, \infty ; \dot{\mathcal{N}}_{q^{*}, \mu^{*},\infty}^{s^{*}})$ with the estimate
\begin{eqnarray}\label{eqmax440}
&&\|\mathbb{P}(u_{0} \cdot \nabla u_{0})\|_{L^{\alpha^{*}, r^{*}}(0, \infty ; \dot{\mathcal{N}}_{q^{*},\mu^{*}, \infty}^{s^{*}})}\nonumber\\
&&\leq
C\|u_{0}\|_{L^{\alpha_{0}, r}(0, \infty ; \dot{\mathcal{N}}_{q_{0},\mu, \infty}^{s_{0}})}
(\|a\|_{\dot{\mathcal{N}}_{p^{*},\mu^{*}, r^{*}}^{k^{*}}}+\|u_{0}\|_{Y})\nonumber\\
&&\leq
C(\|e^{-t A} a\|_{L^{\alpha_{0}, r}(0, \infty ; \dot{\mathcal{N}}_{q_{0}, \mu, 1}^{s_{0}})}+\|u_{0}\|_{X_{\infty}})(\|a\|_{\dot{\mathcal{N}}_{p^{*},\mu^{*}, r^{*}}^{k^{*}}}+\|u_{0}\|_{Y})
\end{eqnarray}
Hence it follows from Theorem \ref{thm1} that the solution
$v_{0}$ of (\ref{ns10})
has an additional regularity such
as $v_{0} \in Y.$
Assume that $v_{j} \in Y$. Since $\alpha^{*} \leq \alpha<\alpha_{0}$ and $-1<s^{*} \leq s<s_{0}-1$,
it follows from Lemma \ref{lemmax33}, (\ref{eqmax439}) and (\ref{eqmax440}) that
\begin{equation}
\mathbb{P}(u_{0} \cdot \nabla v_{j}+v_{j} \cdot \nabla u_{0}+v_{j} \cdot \nabla v_{j}+u_{0} \cdot \nabla u_{0}) \in L^{\alpha^{*}, r^{*}}(0, \infty ; \dot{\mathcal{N}}_{q^{*},\mu^{*}, \infty}^{s^{*}})
\end{equation}
with the estimate
\begin{eqnarray}\label{eqmax441}
&&\|\mathbb{P}(u_{0} \cdot \nabla v_{j}+v_{j} \cdot \nabla u_{0}+v_{j} \cdot \nabla v_{j}+u_{0} \cdot \nabla u_{0})\|_{L^{\alpha^{*}, r^{*}}(0, \infty ; \dot{\mathcal{N}}_{q^{*},\mu^{*}, \infty}^{s^{*}})}\nonumber\\
&\leq& C
(2\|u_{0}\|_{L^{\alpha_{0}, r}(0, \infty ; \dot{\mathcal{N}}_{q_{0}, \mu, \infty}^{s_{0}})}
+\|v_{j}\|_{L^{\alpha_{0}, r}(0, \infty ; \dot{\mathcal{N}}_{q_{0}, \mu, \infty}^{s_{0}})})\|v_{j}\|_{Y}\nonumber\\
&+&C(\|e^{-t A} a\|_{L^{\alpha_{0}, r}(0, \infty; \dot{\mathcal{N}}_{q_{0},\mu, 1}^{s_{0}})}+\|u_{0}\|_{X_{\infty}} )(\|a\|_{\dot{\mathcal{N}}_{p^{*}, \mu^{*}, r^{*}}^{k^{*}}}+\|u_{0}\|_{Y})\nonumber\\
&\leq& C_{*}
(2(\|e^{-t A} a\|_{L^{\alpha_{0}, r}(0, \infty ; \dot{\mathcal{N}}_{q_{0}, \mu, 1}^{s_{0}})}
+\|u_{0}\|_{X_{\infty}})+K)\|v_{j}\|_{Y}\nonumber\\
&+& C_{*}(\|e^{-t A} a\|_{L^{\alpha_{0}, r}(0, \infty; \dot{\mathcal{N}}_{q_{0}, \mu, 1}^{s_{0}} )}
+\|u_{0}\|_{X_{\infty}} )(\|a\|_{\dot{\mathcal{N}}_{p^{*}, \mu^{*}, r^{*}}^{k^{*}}}+\|u_{0}\|_{Y}).
\end{eqnarray}
where $C_{*}$ is the constant independent of $j = 1, 2, \cdots.$
Hence it follows from Theorem \ref{thm1} that the solution $v_{j+1}$ of
(\ref{ns1}) satisfies $v_{j+1} \in Y.$
By induction, it holds $v_{j} \in Y$ for all $j=1,2, \cdots$ and again from
(\ref{eqmax103}) and (\ref{eqmax441}) we obtain
\begin{eqnarray*}
&&\|v_{j+1}\|_{Y}\\
&&\leq C^{*}
\left(2(\|e^{-t A} a\|_{L^{\alpha_{0}, r}(0, \infty ; \dot{\mathcal{N}}_{q_{0}, \mu, 1}^{s_{0}})}+\|u_{0}\|_{X_{\infty}})+K\right)\|v_{j}\|_{Y}\\
&&+C_{*}(\|e^{-t A} a\|_{L^{\alpha_{0}, r}(0, \infty ; \dot{\mathcal{N}}_{q_{0}, \mu, 1}^{s_{0}})}+\|u_{0}\|_{X_{\infty}})(\|a\|_{\dot{\mathcal{N}}_{p^{*}, \mu^{*}, r^{*}}^{k^{*}}}+\|u_{0}\|_{Y}),
\end{eqnarray*}
where $C_{*}=C_{*}(n,\mu,\mu^{*}, q, \alpha, s, r,q^{*}, \alpha^{*}, s^{*}, r^{*})$.
Therefore, similarly to (\ref{eqmax407}), we have
\begin{eqnarray*}
\|w_{j+1}\|_{Y} \leq C_{*}\left(2(\|e^{-t A} a\|_{L^{\alpha_{0}, r}(0, \infty ; \dot{\mathcal{N}}_{q_{0}, \mu,1}^{s_{0}})}+\|u_{0}\|_{X_{\infty}})+2 K\right)\|w_{j}\|_{Y}
,j=0,1, \cdots.
\end{eqnarray*}
If
\begin{equation}\label{eqmax442}
\|e^{-t A} a\|_{L^{\alpha_{0}, r}(0, \infty ; \dot{\mathcal{N}}_{q_{0}, \mu, 1}^{s_{0}})}+\|u_{0}\|_{X_{\infty}}+K<\frac{1}{2 C_{*}},
\end{equation}
then it holds that
\begin{equation*}
\sum_{j=1}^{\infty}\left\|w_{j}\right\|_{Y}<\infty,
\end{equation*}

which implies that the limit $v$ of $\left\{v_{j}\right\}_{j=1}^{\infty}$ belongs to $Y$.

Now it remains to show (\ref{eqmax442}). Since $K$ can be taken arbitrarily small in accordance with
the size of $\|e^{-t A} a\|_{L^{\alpha_{0}, r}(0, \infty ; \dot{\mathcal{N}}_{q_{0}, \mu, 1}^{s_{0}})}+\|u_{0}\|_{X_{\infty}}$, it follows from (\ref{eqmax302}) and Theorem \ref{thm1} that there is a constant
 $\delta^{\prime}=\delta^{\prime}(n,\mu, \mu^{*}, q, \alpha, s, r, q^{*}, \alpha^{*}, s^{*}, r^{*}) \leq \delta$ such that if
\begin{equation*}
\|a\|_{\dot{\mathcal{N}}_{p,\mu, r}^{-1+(n-\mu) / p}}+\|f\|_{L^{\alpha, r}(0, \infty ; \dot{\mathcal{N}}_{q,\mu, \infty}^{s})} \leq \delta^{\prime}
\end{equation*}
then the condition (\ref{eqmax442}) is fulfilled. This completes the proof of Theorem \ref{thm4}.

\textbf{Acknowledgement}

The research of B. Guo
is partially supported by the National Natural Science Foundation
of China, grant 11731014.


\end{document}